\newtheorem{theorem}{Theorem}[section]
\newtheorem{proposition}[theorem]{Proposition}
\newtheorem{lemma}[theorem]{Lemma}
\newtheorem{corollary}[theorem]{Corollary}
 \theoremstyle{remark}
\newtheorem{remark}[theorem]{Remark} 
\theoremstyle{definition} \newtheorem{definition}[theorem]{Definition}
\numberwithin{equation}{section}
\newcommand{\R}{\mathbb{R}} \newcommand{\la}{\lambda}
\newcommand{\vp}{\varphi} \newcommand{\C}{\mathbb{C}}
\newcommand{\Hb}{\overline{\mathbb{H}}} \newcommand{\mb}{\mathbf}
\newcommand{\ve}{\varepsilon} \newcommand{\mc}{\mathcal}
\renewcommand{\Re}{\mathrm{Re}\,} 
\newcommand{\rg}{\mathrm{rg}\,} \newcommand{\rank}{\mathrm{rank}\,}
\newcommand{\B}{\mathbb{B}}
\title[Blowup for wave maps into a negatively curved target]{On the
  existence and stability of blowup for wave maps into a negatively
  curved target}
\author{Roland Donninger} \address{Rheinische
  Friedrich-Wilhelms-Universit\"at Bonn, Mathematisches Institut,
  Endenicher Allee 60, D-53115 Bonn, Germany} 
\email{donninge@math.uni-bonn.de}
\address{Universit\"at
  Wien, Fakult\"at f\"ur Mathematik, Oskar-Morgenstern-Platz 1, A-1090
  Vienna, Austria} 
\email{roland.donninger@univie.ac.at}
\author{Irfan Glogi\'c} \address{Department of Mathematics, The Ohio
  State University, 231 W 18th Ave, Columbus, OH, 43220, USA}
\email{glogic.1@osu.edu}
\begin{document}

\begin{abstract}
  We consider wave maps on $(1+d)$-dimensional Minkowski space. For
  each dimension $d\geq 8$ we construct a negatively curved,
  $d$-dimensional target manifold that allows for the existence of a
  self-similar wave map which provides a stable blowup mechanism for
  the corresponding Cauchy problem. 
\end{abstract}

\maketitle

\section{Introduction}
\noindent We consider the Cauchy problem for a wave map from the Minkowski
spacetime $(\R^{1,d},\eta)$ into a warped product manifold
$N^d=\R^+ \times_g\, \mathbb{S}^{d-1}$ with metric $h$, see
e.g.~\cite{ONe83, Tac85} for a definition.  The metric $h$ has the form
\begin{equation}\label{Def:Metric}
  h=du^2+g(u)^2d\theta^2
\end{equation}
where $(u,\theta)\in \R^+\times \mathbb{S}^{d-1}$ is the natural polar coordinate
system on $N^d$, $d\theta^2$ is the standard metric on $\mathbb{S}^{d-1}$
and 
\begin{equation}
  \label{eq:assumg}
  g\in C^\infty(\R),\quad g\mbox{ is odd},\quad g'(0)=1,\quad
 g>0 \mbox{ on }(0,\infty).
\end{equation}
Furthermore, we endow
the Minkowski space with standard spherical coordinates
$(t,r,\omega)\in\R\times\R^+\times\mathbb{S}^{d-1}$. The metric $\eta$
thereby becomes
\begin{equation}\label{Eq:MetricRotMink}
  \eta = - dt^2 + dr^2 + r^2d\omega^2.
\end{equation}
In this setting, a map $U:(\R^{1,d},\eta)\rightarrow (N^d,h)$ can
be written as
\[
U(t,r,\omega)=(u(t,r,\omega),\theta(t,r,\omega)).
\]
We restrict our attention to the special subclass of so-called
1-equivariant or corotational maps where
\begin{equation*}
  u(t,r,\omega)=u(t,r) \quad\text{and}\quad  \theta(t,r,\omega)=\omega.
\end{equation*}
Under this ansatz the wave maps equation for $U$ reduces to the single
semilinear radial wave equation
\begin{equation}\label{Eq:CorWMgen}
  \left (\partial_t^2-\partial_r^2-\frac{d-1}{r}\partial_r\right )u(t,r)+\frac{d-1}{r^2}g(u(t,r))g'(u(t,r))=0,
\end{equation}
see e.g.~\cite{ShaTah94}.

It is not hard to see that the Cauchy problem for
Eq.~\eqref{Eq:CorWMgen} is locally well-posed for sufficiently smooth
data and even the low-regularity theory is well understood
\cite{ShaTah94}.
Consequently, the interesting questions concern the global Cauchy
problem and in particular, the formation of singularities in finite
time.
There is by now a sizable literature on blowup for wave maps which we
cannot review here in its entirety. Let it suffice to say that the
energy-critical case $d=2$ attracted particular attention, see
e.g.~\cite{BizChmTab01, Str03, KriSchTat08, RodSte10, RapRod12,
  SteTat10a, SteTat10b, KriSch12, CotKenLawSch15a, CotKenLawSch15b,
  Cot15, CanKri15, LawOh16} for recent contributions. In
supercritical dimensions $d\geq 3$ the existence of self-similar
solutions is typical \cite{Sha88, TurSpe90, CST98, Biz00, BizonBiernat15} and
stability results for blowup were obtained in \cite{BizChmTab00, Donninger11,
  DonSchAic12, BizonBiernat15, BieBizMal16, ChaDonnGlo17}. For nonexistence of type
II blowup see \cite{DodLaw15}. Note, however, that there exists
nonself-similar blowup in sufficiently high dimensions \cite{GhoIbrNgu17}.

According to a heuristic principle, one typically has finite-time blowup
if the curvature of the target is positive.
For negatively curved targets, on the other hand,
one expects global well-posedness.
A notable exception to that rule is provided by the construction of a self-similar solution for
a negatively curved target for $d=7$ in \cite{CST98}, which indicates
that the situation is more subtle.
In the present paper we show that the example from \cite{CST98} is not
a peculiarity. We construct suitable target manifolds for any
dimension $d\geq 8$ that allow for the existence of an \emph{explicit}
self-similar
solution. Moreover, we claim that the corresponding self-similar
blowup is nonlinearly asymptotically stable under small perturbations
of the initial data. In the case $d=9$ we prove this claim
rigorously. This provides the first example of \emph{stable} blowup
for wave maps into a negatively curved target.

\subsection{Self-similar solutions}
In order to look for self-similar solutions, we first observe that Eq.~\eqref{Eq:CorWMgen}
has the natural scaling symmetry
\begin{equation}\label{Eq:NatScaling}
  u(t,r) \mapsto
  u_{\lambda}(t,r):=u\left(\tfrac{t}{\la},\tfrac{r}{\la}\right),\quad \lambda>0,
\end{equation}
in the sense that if $u$ solves Eq.~\eqref{Eq:CorWMgen} then $u_{\la}$
solves it, too. Consequently, it is natural to look for solutions of
the form $u(t,r)=\phi(\frac{r}{t})$. Taking into account the time
translation and reflection symmetries of Eq.~\eqref{Eq:CorWMgen},
we arrive at the slightly more general ansatz
\begin{equation}\label{Eq:SimilarityAnsatz}
  u(t,r)=\phi(\rho), \quad \rho=\frac{r}{T-t},
\end{equation}
where the free parameter $T>0$ is the blowup time. By plugging the
ansatz~\eqref{Eq:SimilarityAnsatz} into Eq.~\eqref{Eq:CorWMgen} we
obtain the ordinary differential equation
\begin{equation}\label{Eq:CorWMode}
  (1-\rho^2)\phi''(\rho)+\left(\frac{d-1}{\rho}-2\rho\right)\phi'(\rho)-\frac{(d-1)g(\phi(\rho))g'(\phi(\rho))}{\rho^2}=0.
\end{equation}
By recasting~\eqref{Eq:CorWMode} into an integral equation and then using a fixed point argument one can show that any solution to Eq.~\eqref{Eq:CorWMode} that vanishes together with its first derivative at $\rho=0$ is identically zero near $\rho=0$. Therefore, any nontrivial smooth solution $\phi$ to
Eq.~\eqref{Eq:CorWMode} for which $\phi(0)=0$ must have $\phi'(0)\not=0$, and since 
\begin{equation}
  \frac{\partial}{\partial r}\phi\left(\frac{r}{T-t}\right)\bigg|_{r=0}=\frac{\phi'(0)}{T-t},
\end{equation}
such $\phi$ gives rise to a
smooth solution of Eq.~\eqref{Eq:CorWMgen} which suffers a gradient
blowup at the origin in finite time. Furthermore, due to finite
speed of propagation, this type of singularity arises from smooth,
compactly supported initial data. 
In the following, we restrict ourselves to the study of the solution 
in the backward lightcone of the singularity,
\begin{equation}\label{Eq:LightCone}
  \mathcal{C}_{T} := \{ (t,r): t\in [0,T),  r \in[0,T - t]   \}.
\end{equation}
Note that in terms of the coordinate $\rho$, $\mathcal{C}_T$ corresponds to the interval
$[0,1]$. Consequently, we look for solutions of
Eq.~\eqref{Eq:CorWMode} that belong to $C^{\infty}[0,1].$

\section{Existence of blowup for a negatively curved target manifold}
\noindent In this section we construct for every $d\geq 8$ a negatively curved
$d-$dimensional Riemannian manifold $(N^d,h)$
which allows for a wave map $U:(\R^{1,d},\eta)\rightarrow (N^d,h)$
that starts off smooth and blows up in finite time. We do this by a
suitable choice of the function $g$ that defines the metric on $N^d$ by
means of~\eqref{Def:Metric}. To begin with, we restrict ourselves to
small $u$ and set
\begin{equation}
\label{Def:g}
 g(u):=u\sqrt{1+7u^2-(23d-170)u^4}. 
\end{equation}
Clearly, $g$ is odd and smooth locally around the origin. Furthermore,
$g(u)>0$ for small $u>0$ and $g'(0)=1$, cf.~\eqref{eq:assumg}.
In addition, for $d\geq8$, the
metric~\eqref{Def:Metric} makes the manifold $N^d$ negatively curved
locally around $u=0$, see Proposition~\ref{Prop:NegCurv}. Next,
Eq.~\eqref{Eq:CorWMgen} takes the form
\textsl{}\begin{equation}\label{Eq:CorWM_g}
    \left (\partial_t^2-\partial_r^2-\frac{d-1}{r}\partial_r\right
    )u(t,r)+
\frac{(d-1)[u(t,r)+14u(t,r)^3-3(23d-170)u(t,r)^5]}{r^2}=0,
\end{equation}
and the corresponding ordinary differential
equation~\eqref{Eq:CorWMode} becomes
\begin{equation}\label{Eq:CorWMode_g}
  (1-\rho^2)\phi''(\rho)
  +\left(\frac{d-1}{\rho}-2\rho\right)\phi'(\rho)
-\frac{(d-1)[\phi(\rho)+14\phi(\rho)^3-3(23d-170)\phi(\rho)^5]}{\rho^2}=0.
\end{equation}
As already discussed, any nonzero function $\phi\in C^{\infty}[0,1]$ that
solves Eq.~\eqref{Eq:CorWMode_g} and vanishes at $\rho=0$ yields a classical solution to
Eq.~\eqref{Eq:CorWM_g} that blows up in finite time. In fact,
Eq.~\eqref{Eq:CorWMode_g} has an \emph{explicit} formal solution
\begin{equation}\label{Eq:Sol}
  \phi_0(\rho)=\frac{a\rho}{\sqrt{b-\rho^2}}
\end{equation}
where
\begin{equation*}
  a=\sqrt{\frac{d}{E(d)}},\quad \quad  b=1+\frac{d}{2}-\frac{7d(d-1)}{E(d)}
\end{equation*}
and \[ E(d)=\sqrt{(46d^2-291d-49)(d-1)}+7(d-1). \]
Furthermore, if $d\geq8$ then $E(d)$ is positive and $b>1$, which makes
$\phi_0$ a smooth and increasing function on $[0,1]$.  Now we
have the following result.
\begin{proposition}\label{Prop:NegCurv}
  For each $d\geq8$ there exists an $\ve>0$ and a function
  $g: \R\to\R$ satisfying \eqref{eq:assumg} such that
  $g(u)= u\sqrt{1+7u^2-(23d-170)u^4}$ for $|u|<\phi_0(1)+\ve$
  and the manifold $N^d$ with metric given by~\eqref{Def:Metric}
 has all sectional curvatures negative.
\end{proposition}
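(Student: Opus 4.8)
The plan is to translate the negativity of all sectional curvatures of $N^d$ into two pointwise inequalities on $g$, observe that one of them is automatic, verify the surviving one for the explicit profile on the whole range swept out by $\phi_0$, and finally extend the explicit $g$ off that range while preserving the inequality. For the warped product metric~\eqref{Def:Metric} on $N^d=\R^+\times_g\mathbb{S}^{d-1}$ the curvature operator is diagonal, and since $\mathbb{S}^{d-1}$ carries constant curvature $1$ and $d\geq 3$, at a point with coordinate $u>0$ the two extremal sectional curvatures are
\[
  K_{\mathrm{rad}}(u)=-\frac{g''(u)}{g(u)}\qquad\text{and}\qquad K_{\mathrm{sph}}(u)=\frac{1-g'(u)^2}{g(u)^2},
\]
the former for $2$-planes containing $\partial_u$, the latter for $2$-planes tangent to the sphere factor, and every sectional curvature lies between them (see, e.g., \cite{ONe83}). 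Hence all sectional curvatures of $N^d$ are negative exactly when $g''>0$ and $g'(u)^2>1$ on $(0,\infty)$, the vertex $u=0$ being a smooth point by~\eqref{eq:assumg} and covered by continuity. The first simplification is that the second inequality — and the positivity clause of~\eqref{eq:assumg} — comes for free: if $g''>0$ on $(0,\infty)$, then $g'(0)=1$ forces $g'(u)=1+\int_0^u g''(s)\,ds>1$ and hence $g>0$ on $(0,\infty)$. So it remains only to construct $g$ as in the statement with $g''>0$ on all of $(0,\infty)$.

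Writing $g_{\mathrm{exp}}(u)=u\sqrt{P(u)}$ with $P(u)=1+7u^2-(23d-170)u^4$, and setting $c=23d-170>0$ and $v=u^2$, the next step is to establish $g_{\mathrm{exp}}''>0$ on the range $[0,\phi_0(1)]$ actually reached by $\phi_0$. First I would record the closed form
\[
  \phi_0(1)^2=\frac{2}{E(d)-14(d-1)}=\frac{E(d)}{d(d-1)c},
\]
the second equality obtained by rationalization; it yields $c\,\phi_0(1)^2=E(d)/(d(d-1))$, which is bounded for $d\geq 8$ and tends to $0$ as $d\to\infty$. This bound puts $\phi_0(1)^2$ safely below the positive root of $1+7v-cv^2$, so $P>0$ on $[0,\phi_0(1)]$ and $g_{\mathrm{exp}}$ is a genuine smooth, positive profile there. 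A direct differentiation then gives, for $0<u\leq\phi_0(1)$,
\[
  g_{\mathrm{exp}}''(u)>0\quad\Longleftrightarrow\quad 21+(98-10c)v-63c\,v^2+6c^2v^3>0 ,
\]
and the bound on $c\,\phi_0(1)^2$ makes this cubic positive on the relevant $v$-interval for every $d\geq 8$: once $c\,\phi_0(1)^2<1.9$ (which holds for all $d\geq 14$) this is immediate, and the remaining cases $8\leq d\leq 13$ are checked directly — e.g.\ for $d=8$ one has $c=14$, $\phi_0(1)^2=1/7$, and the cubic $21-42v-882v^2+1176v^3$ decreases from $21$ to $3/7$ on $[0,1/7]$. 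By continuity there is then an $\ve>0$ with $P>0$ and $g_{\mathrm{exp}}''>0$ on $(0,\phi_0(1)+\ve]$; set $u_0:=\phi_0(1)+\ve$.

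The extension is soft. The function $G_0:=g_{\mathrm{exp}}''|_{[0,u_0]}$ is smooth, vanishes at $0$, and is positive on $(0,u_0]$; I would extend it to a smooth $G\colon[0,\infty)\to\R$ that equals $G_0$ on $[0,u_0]$, equals a positive constant for large $u$, and is positive on $(0,\infty)$ (any smooth positive interpolation on an intermediate interval works), and then extend $G$ to an odd function on $\R$. Setting $g(u):=u+\int_0^u(u-s)G(s)\,ds$, one has $g$ smooth, $g(0)=0$, $g'(u)=1+\int_0^u G(s)\,ds$ so $g'(0)=1$, $g'$ even (hence $g$ odd), and $g''=G>0$ on $(0,\infty)$; thus~\eqref{eq:assumg} holds. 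On $[0,u_0]$ one has $g''=g_{\mathrm{exp}}''$ with matching value and first derivative at $0$, so $g\equiv g_{\mathrm{exp}}$ there, hence on $|u|<u_0=\phi_0(1)+\ve$ by oddness. By the reduction of the first paragraph the resulting $N^d$ has all sectional curvatures negative, which is the claim.

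I expect the middle step to be the only real obstacle: one must control the size of $\phi_0(1)$ as a function of $d$ sharply enough to keep the displayed cubic positive all the way up to the endpoint $v=\phi_0(1)^2$ — the $d=8$ value $3/7$ shows the estimate is not far from sharp, which suggests that the precise coefficients $7$ and $23d-170$ in the definition of $g$ have been tuned for exactly this purpose. Everything else is either the standard warped-product curvature identity or a soft cut-and-paste argument.
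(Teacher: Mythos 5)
Your proposal is correct, and the overall architecture coincides with the paper's proof: both reduce the claim to the single inequality $g''>0$ on the range swept by $\phi_0$ (the spherical curvature $(1-g'^2)/g^2<0$ and the positivity of $g$ in~\eqref{eq:assumg} then come for free from $g'(0)=1$), both boil this down to the positivity of the same polynomial (your cubic $21+(98-10c)v-63cv^2+6c^2v^3$ in $v=u^2$ is precisely the paper's $N(e,u)$ written with $c=23e+14$), and both finish with a cut-and-paste extension of $g$ beyond $\phi_0(1)+\ve$.

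Where you differ is the polynomial step and the extension step, and both differences are worth noting. For the positivity of the cubic on $[0,\phi_0(1)^2]$, the paper runs a uniform argument in $e=d-8$: it shows $\partial_u^2 N(e,\cdot)<0$ on the interval (so $N$ is concave there) and checks $N>0$ at both endpoints, the endpoint evaluation ultimately reducing to the sign of an explicit degree-$7$ polynomial $S(e)$ with mixed signs. You instead split off the tail $d\geq 14$ using the clean identity $c\,\phi_0(1)^2=E(d)/(d(d-1))$ and the observation that this quantity drops below $1.9$ and that $c\geq 152$, which makes a crude term-by-term lower bound suffice, and you check the finitely many cases $8\leq d\leq 13$ directly. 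This is a genuinely different and arguably more transparent route through the same inequality, at the cost of uniformity. For the extension, the paper merely asserts that $g$ "can be easily modified''; your construction --- extend $g_{\mathrm{exp}}''$ to a positive smooth function $G$, integrate twice, odd-reflect --- actually supplies the missing detail and is the right thing to say.

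One small imprecision, which does not affect the validity of your argument: the equivalence ``all sectional curvatures negative $\iff g''>0$ and $g'^2>1$ on $(0,\infty)$'' with the pole ``covered by continuity'' is slightly too strong as stated, since $g''>0$ on $(0,\infty)$ only forces the curvature at the pole to be $\leq 0$ (consider $g(u)=u+u^5$). What rescues you is that your construction keeps $g\equiv g_{\mathrm{exp}}$ near $u=0$, and $g_{\mathrm{exp}}''/g_{\mathrm{exp}}\to 21>0$ as $u\to 0$ --- equivalently, the constant term of your cubic is $21$ --- so the pole curvature is exactly $-21<0$. It would be cleaner to state the sufficient condition as $g''>0$ on $(0,\infty)$ together with $\lim_{u\to 0^+}g''(u)/g(u)>0$, and then note that the latter is the value of the cubic at $v=0$.
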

\noindent The proof is somewhat lengthy but elementary and therefore
postponed to the Appendix, see Sec.~\ref{Sec:ProofNegCurv}.

We define
\begin{equation}\label{eq:u^T}
  u^T(t,r):=\phi_0\left(\frac{r}{T-t}\right), \quad (t,r)\in\mathcal{C}_T.
\end{equation}
Note that $|\phi_0(\rho)|\leq \phi_0(1)$ for all $\rho\in [0,1]$ and
thus, 
\[ U^T(t,r,\omega):=(u^T(t,r),\omega) \]
is a wave map from $\mc C_T \subset \R^{1,d}$ to $(N^d,h)$.
By finite speed of propagation we obtain the following result.

\begin{theorem}
  For every $d\geq8$ there exists a $d$-dimensional, negatively curved
  Riemannian manifold $N^d$ such that the Cauchy problem for wave
  maps from Minkowski space $\R^{1,d}$ into $N^d$ admits a
  solution which develops from smooth Cauchy data of
  compact support and forms a singularity in finite time.
\end{theorem}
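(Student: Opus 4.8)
The plan is to assemble the pieces already laid out in the excerpt. The statement is essentially a packaging of Proposition~\ref{Prop:NegCurv} together with the explicit self-similar profile $\phi_0$ and a finite-speed-of-propagation argument. First I would fix $d\geq 8$ and invoke Proposition~\ref{Prop:NegCurv} to obtain $\ve>0$ and a function $g:\R\to\R$ satisfying~\eqref{eq:assumg} that agrees with $u\sqrt{1+7u^2-(23d-170)u^4}$ on the interval $|u|<\phi_0(1)+\ve$ and for which the warped-product manifold $N^d=\R^+\times_g\mathbb{S}^{d-1}$ with metric~\eqref{Def:Metric} has all sectional curvatures negative. This $N^d$ is the manifold whose existence the theorem asserts.

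Next I would verify that $\phi_0$ from~\eqref{Eq:Sol} genuinely solves Eq.~\eqref{Eq:CorWMode_g} on $[0,1]$: since $d\geq 8$ one has (as stated) $E(d)>0$ and $b>1$, so $\phi_0(\rho)=a\rho/\sqrt{b-\rho^2}$ is smooth and increasing on $[0,1]$ with $\phi_0(0)=0$ and $\phi_0'(0)=a/\sqrt{b}\neq 0$; plugging this ansatz into~\eqref{Eq:CorWMode_g} and collecting powers of $\rho$ reduces the ODE to the algebraic identities defining $a$ and $b$, which hold by construction. Because $|\phi_0(\rho)|\leq\phi_0(1)<\phi_0(1)+\ve$ on $[0,1]$, the profile only ever sees the region of $N^d$ where $g$ has the explicit form, so $U^T(t,r,\omega)=(u^T(t,r),\omega)$ with $u^T$ as in~\eqref{eq:u^T} is a genuine corotational wave map from the backward lightcone $\mathcal{C}_T\subset\R^{1,d}$ into $(N^d,h)$, and the computation $\partial_r u^T(t,0)=\phi_0'(0)/(T-t)$ shows the gradient blows up as $t\to T^-$.

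It remains to upgrade this lightcone solution to one arising from smooth, compactly supported Cauchy data on a full time slice. Here I would fix any $T>0$, take as initial data at $t=0$ the restriction $(u^T(0,\cdot),\partial_t u^T(0,\cdot))$ on the ball $\{r\leq T\}$, smoothly cut off to a compactly supported pair $(f,h)\in C^\infty_c(\R^d)^2$ that is unchanged on, say, $\{r\leq T/2\}$. Local well-posedness for~\eqref{Eq:CorWMgen} (with the smooth $g$ above) gives a unique smooth corotational solution $u$ with this data on some time interval; finite speed of propagation for the wave operator $\partial_t^2-\partial_r^2-\frac{d-1}{r}\partial_r$ forces $u$ to coincide with $u^T$ on the smaller backward lightcone $\{(t,r):r\leq T/2-t\}$, hence $u$ persists up to $t=T/2$ and suffers the same gradient blowup at the origin as $t\to (T/2)^-$. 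The corresponding map into $N^d$ is the desired solution.

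The only genuinely nontrivial input is Proposition~\ref{Prop:NegCurv}, which is cited and proved in the appendix; everything else is bookkeeping. The subtlety I would be most careful about is making sure the cutoff does not destroy either smoothness or the equivariant structure and that the cutoff data still take values in the portion of $N^d$ where $g$ is explicitly defined — but since $\phi_0$ is bounded by $\phi_0(1)$ on $[0,1]$ with a buffer of $\ve$, a cutoff supported near $r=0$ and smoothly interpolated to $0$ keeps $|u|$ within that range, so this causes no difficulty. Thus the main obstacle is purely the curvature computation already handled by Proposition~\ref{Prop:NegCurv}, and the theorem follows by combining it with the explicit profile and finite propagation speed.
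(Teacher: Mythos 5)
Your overall strategy is the same as the paper's (explicit profile $\phi_0$ plus Proposition~\ref{Prop:NegCurv} plus finite speed of propagation), but the final step as written fails. You cut off the data so that it agrees with $u^T[0]$ only on $\{r\leq T/2\}$. Finite speed of propagation then forces $u=u^T$ only on the cone $\{(t,r): r+t\leq T/2\}$, whose vertex is $(T/2,0)$. But $u^T$ is perfectly regular there: $\partial_r u^T(t,0)=\phi_0'(0)/(T-t)$ is finite on all of $[0,T)$, in particular at $t=T/2$. The singularity of $u^T$ sits at $(T,0)$, which lies strictly outside the cone you control, so you cannot conclude that $u$ blows up as $t\to (T/2)^-$; what you have shown is only that $u$ coincides with a bounded, smooth piece of $u^T$.

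The correction is small. Leave the data unchanged on all of $\{r\leq T\}$, not just $\{r\leq T/2\}$. This is possible because $b>1$ for $d\geq 8$, so $\phi_0$ is actually smooth on $[0,\sqrt{b})\supsetneq[0,1]$ and hence $u^T(0,\cdot)$ and $\partial_t u^T(0,\cdot)$ extend smoothly past $r=T$; one then multiplies by a smooth cutoff that equals $1$ on $\{r\leq T\}$ and vanishes for $r\geq T(1+\eta)$ with $\eta$ small. Finite speed of propagation now gives $u=u^T$ on the full backward lightcone $\mathcal{C}_T=\{r+t\leq T\}$, and the gradient catastrophe at $(T,0)$ is inherited. (Equivalently, keep your cutoff at radius $T/2$ but use the profile $u^{T/2}$, whose blowup point $(T/2,0)$ is the vertex of the cone you control.) With this fix your argument is complete and coincides with the intended proof.
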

\begin{remark}
  Any function $g$ of the form
  \begin{equation}\label{Eq:MetricGeneral}
    g(u)^2=u^2+c_1u^4+c_2u^6, \quad c_1,c_2\in\R
  \end{equation} gives rise to a (formal) solution to
  Eq.~\eqref{Eq:CorWMode} of the form~\eqref{Eq:Sol}. However, for $d
  \leq 7$ there is provably no choice of real numbers $c_1,c_2$
  in~\eqref{Eq:MetricGeneral} such that the corresponding
  solution~\eqref{Eq:Sol} is smooth on $[0,1]$ and $u^T$ from~\eqref{eq:u^T} stays inside the negatively curved
  neighborhood of the pole $u=0$ whose metric is given by~\eqref{Eq:MetricGeneral}. 
\end{remark}

In order to determine the role of the solution $u^T$ for generic
evolutions, it is necessary to investigate its stability under
perturbations.
In fact,  
we claim that for any $d\geq 8$, the self-similar
solution \eqref{eq:u^T} exhibits \emph{stable} blowup, i.e., there is
an open set of radial initial data that give rise to solutions which
approach $u^T$ in $\mathcal{C}_T$ as $t\rightarrow T^-$. The rest of
the paper is devoted to the proof of this stability property. For
simplicity we restrict ourselves to the lowest
odd dimension $d=9$.

\section{Stability of blowup}
\noindent From now on we fix $d=9$. In view of Eqs.~\eqref{Eq:CorWMgen} and
\eqref{Def:g}, we consider the Cauchy problem
\begin{align}\label{Eq:CorWMd9}
  \begin{cases}
    \displaystyle{\left
        (\partial_t^2-\partial_r^2-\frac{8}{r}\partial_r \right
      )u(t,r)
      +\frac{8[u(t,r)+14u(t,r)^3-111u(t,r)^5]}{r^2}=0,}\quad &(t,r) \in \mathcal{C}_T\\
    u(0,r)=u_0(r),\quad \partial_0 u(0,r)=u_1(r) &r\in[0,T].
  \end{cases}
\end{align}
The restriction to the backward lightcone $\mc C_T$ is possible and
natural by finite speed of propagation.
Furthermore, to
ensure regularity of the solution at the origin $r=0$, we
impose the boundary condition
\begin{equation}\label{Eq:BoundaryCondition}
  u(t,0)=0 \quad \text{for} \quad t\in [0,T).
\end{equation}
The blowup solution~\eqref{Eq:Sol} now becomes
\begin{equation}\label{eq:sol9}
  u^T(t,r)=\phi_0(\rho)=\frac{3\rho}{\sqrt{2(155-74\rho^2)}} \quad\text{where}\quad \rho=\frac{r}{T-t}.\newline
\end{equation}
Note that by construction, the wave map evolution for the target manifold $N^9$ is
given by Eq.~\eqref{Eq:CorWMd9}, provided that $|u(t,r)|\leq\phi_0(1)+\ve_1$
for some small $\ve_1>0$. We are only interested in the evolution
in the backward lightcone of the point of blowup and therefore study
Eq.~\eqref{Eq:CorWMd9} with no a priori restriction on the size of $u$.
A posteriori we show that the solutions we construct stay below $\phi_0(1)+\epsilon_1$.

Note further that Eq.~\eqref{Eq:CorWMd9} can be viewed as a
nonlinear wave equation with polynomial nonlinearity. Indeed, the
boundary condition~\eqref{Eq:BoundaryCondition} allows for a change of
variable $u(t,r)=rv(t,r)$ which leads to an eleven-dimensional radial
wave equation in $v$,
\begin{equation} \label{Eq:11_Dim_Wave}
\left
  (\partial_t^2-\partial_r^2-\frac{10}{r}\partial_r \right)
v(t,r)=-8\left [14v(t,r)^3-111r^2v(t,r)^5 \right].
\end{equation} In
fact, this is the point of view we adopt here. In particular, the
nonlinear term in Eq.~\eqref{Eq:CorWMd9} becomes smooth and
therefore admits a uniform Lipschitz estimate needed for a contraction
mapping argument, see Lemma \ref{Lem:Nonlinearity}. We also remark
that Eq.~\eqref{Eq:11_Dim_Wave}, in spite of its defocusing character
(at least for small values of $v$), admits an explicit self-similar
blowup solution. This is in stark contrast to the cubic defocusing
wave equation
\[\left (\partial_t^2-\partial_r^2-\frac{10}{r}\partial_r \right)v(t,r) =-v(t,r)^3 \]
for which no self-similar solutions exist. The self-similar blowup
in Eq.~\eqref{Eq:11_Dim_Wave} can therefore be
understood as a consequence of the presence of the focusing quintic
term which dominates the dynamics for large initial data.
 
\subsection{Main result} We start by intuitively describing the main
result. We fix $T_0>0$ and prescribe initial data $u[0]$ that
are close to $u^{T_0}[0]$ on a ball of radius slightly larger than
$T_0$. Here and throughout the paper we use the abbreviation
$ u[t]:= (u(t,\cdot), \partial_t u (t,\cdot))$. Then we prove the existence
of a particular $T$ near $T_0$ for which the solution $u$ converges to
$u^T$ inside the backward lightcone $\mc C_T$ in a norm adapted to the
blowup behavior of $u^T$.  For the precise statement of the main
result we use Definitions \ref{Def:Solution} and \ref{Def:BlowupTime}.
\begin{theorem}\label{Th:Main} Fix $T_0 > 0$. There exist constants
  $M,\delta,\epsilon > 0$ such that for any radial initial data $u[0]$
  satisfying
  \begin{align}\label{Eq:CondData}
    \bigg\||\cdot|^{-1}\bigg( u[0](|\cdot|)-  u^{T_0}[0](|\cdot|)\bigg) \bigg\|_{H^{6}(\mathbb{B}^{11}_{T_0+\delta}) \times H^{5}(\mathbb{B}^{11}_{T_0+\delta})}  \leq \frac{\delta}{M}
  \end{align}
  the following statements hold:
  \begin{enumerate}[i)]
  \item The blowup time at the origin $T :=T_{u[0]}$ belongs to the
    interval $[T_0 - \delta, T_0 + \delta]$.
  \item The solution $u: \mathcal{C}_T \to \R$ to
    Eq.~\eqref{Eq:CorWMd9} satisfies
    \begin{align}
      (T-t)^{-\frac{9}{2}+k} \bigg\||\cdot|^{-1}\bigg( u(t,|\cdot|) - u^T(t,|\cdot|)\bigg) \bigg\|_{\dot H^k (\mathbb{B}^{11}_{T-t})} &\leq \delta(T-t)^{\epsilon}, \label{Eq:Decay1}\\
      (T-t)^{-\frac{7}{2}+l} \bigg\||\cdot|^{-1}\bigg( \partial_t u(t,|\cdot|) - \partial_t u^T(t,|\cdot|)\bigg) \bigg\|_{\dot H^l (\mathbb{B}^{11}_{T-t})} &\leq \delta(T-t)^{\epsilon},\label{Eq:Decay2}
    \end{align}
    for integers $0\leq k\leq 6$ and $0\leq l \leq 5$. Furthermore,
    \begin{equation}\label{Eq:SupEstimate}
      \big\| u(t,\cdot) - u^T(t,\cdot)\big\|_{ L^{\infty}(0,T-t)} \leq \delta(T-t)^{\epsilon}.
    \end{equation}
  \end{enumerate}
	
\end{theorem}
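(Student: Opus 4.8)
The plan is to establish Theorem~\ref{Th:Main} by the now-standard strategy for self-similar blowup: recast \eqref{Eq:CorWMd9} as an abstract semilinear evolution equation in similarity variables, carry out a spectral analysis of the linearized flow, and close a fixed-point argument that simultaneously selects the blowup time $T$.

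First I would pass to $v=u/r$, so that, thanks to the boundary condition \eqref{Eq:BoundaryCondition}, $u$ solves \eqref{Eq:CorWMd9} iff $v$ solves the eleven-dimensional radial wave equation \eqref{Eq:11_Dim_Wave}, whose nonlinearity is a smooth (polynomial) function of $v$. I would then introduce the similarity coordinates $\tau=-\log(T-t)$, $y=x/(T-t)\in\mathbb B^{11}$ and the rescaled field $\psi(\tau,y):=(T-t)\,v(t,(T-t)y)$, which turns the self-similar solution \eqref{eq:sol9} into the $\tau$-independent profile $\psi_T(y)=\phi_0(|y|)/|y|$. Writing $\psi=\psi_T+\varphi$ and introducing a companion variable for $\partial_\tau\varphi$ adapted to the rescaling, one arrives at a first-order system
\[
\partial_\tau\Phi=\mathcal L\Phi+\mathcal N(\Phi),\qquad \mathcal L=\mathcal L_0+\mathcal L',
\]
on the Hilbert space $\mathcal H=H^6(\mathbb B^{11})\times H^5(\mathbb B^{11})$, where $\mathcal L_0$ comes from the free rescaled wave operator, $\mathcal L'$ is the bounded potential term obtained by linearizing the nonlinearity at $\psi_T$, and $\mathcal N$ collects the quadratic and higher-order remainder (well defined and smooth precisely because of the substitution $u=rv$).

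Next I would develop the linear theory. Via the Lumer--Phillips theorem, after replacing the inner product on $\mathcal H$ by an equivalent one, I would show that $\mathcal L_0$ generates a strongly continuous semigroup $S_0(\tau)$ with growth bound $\|S_0(\tau)\|\lesssim e^{-\alpha\tau}$ for some $\alpha>0$, treating the lower-order and boundary contributions as relatively bounded perturbations. Since $\mathcal L'$ is compact (it is essentially multiplication by a smooth bounded function together with the compact embedding $H^6(\mathbb B^{11})\hookrightarrow H^5(\mathbb B^{11})$), $\mathcal L$ also generates a semigroup $S(\tau)$, and in the half-plane $\{\Re\lambda>-\alpha\}$ the spectrum of $\mathcal L$ consists of finitely many eigenvalues of finite multiplicity. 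Differentiating the family $T'\mapsto u^{T'}$ at $T'=T$ exhibits the eigenvalue $\lambda=1$ with an explicit eigenfunction $\mathbf g$ built from $\phi_0'$. The crucial claim --- mode stability --- is that $\lambda=1$ is the \emph{only} eigenvalue of $\mathcal L$ with $\Re\lambda\ge0$; I would prove it by writing out the spectral ODE, which has explicit coefficients because $\phi_0$ is explicit, transforming it into a hypergeometric-type equation, and showing that for $\Re\lambda\ge0$ it admits no solution that is simultaneously analytic at the center $|y|=0$ and of the required Sobolev regularity at the light-cone boundary $|y|=1$, apart from the symmetry solution at $\lambda=1$. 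Letting $\mathbf P$ be the Riesz projection onto the one-dimensional $\mathbf g$-eigenspace, one then has $S(\tau)\mathbf P=e^\tau\mathbf P$ and $\|S(\tau)(1-\mathbf P)\|\lesssim e^{-\omega\tau}$ for some $\omega>0$.

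Finally I would close the nonlinear argument. The algebra property of $H^6(\mathbb B^{11})$ (as $6>\tfrac{11}{2}$) and the polynomial form of the nonlinearity give locally Lipschitz bounds $\|\mathcal N(\Phi)-\mathcal N(\Psi)\|_{\mathcal H}\lesssim(\|\Phi\|_{\mathcal H}+\|\Psi\|_{\mathcal H})\|\Phi-\Psi\|_{\mathcal H}$ on balls of $\mathcal H$. Writing $\mathbf u=\mathbf u(T)$ for the rescaled initial-data difference (so that $\|\mathbf u(T)\|_{\mathcal H}\lesssim\delta/M$ by \eqref{Eq:CondData}), Duhamel's formula reads $\Phi(\tau)=S(\tau)\mathbf u+\int_0^\tau S(\tau-\sigma)\mathcal N(\Phi(\sigma))\,d\sigma$. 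To suppress the unstable direction I would instead solve the modified equation
\[
\Phi(\tau)=S(\tau)\mathbf u-e^\tau\mathbf P\Big(\mathbf u+\int_0^\infty e^{-\sigma}\mathcal N(\Phi(\sigma))\,d\sigma\Big)+\int_0^\tau S(\tau-\sigma)\mathcal N(\Phi(\sigma))\,d\sigma,
\]
which, by the Banach fixed-point theorem in $\{\Phi\in C([0,\infty),\mathcal H):\sup_{\tau\ge0}e^{\omega\tau}\|\Phi(\tau)\|_{\mathcal H}\le\delta\}$, has a unique solution $\Phi=\Phi_T$ once $M$ is large. It then remains to show that for a suitable $T\in[T_0-\delta,T_0+\delta]$ the correction term vanishes, i.e.\ $\mathbf P\big(\mathbf u(T)+\int_0^\infty e^{-\sigma}\mathcal N(\Phi_T(\sigma))\,d\sigma\big)=0$; since $T\mapsto\mathbf P\,\mathbf u(T)$ has nonvanishing derivative at $T_0$, this follows from a continuity/Brouwer argument. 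For that $T$, $\Phi_T$ solves the genuine Duhamel equation and decays like $e^{-\omega\tau}$; undoing the change of variables and accounting for the Jacobian converts this into \eqref{Eq:Decay1}--\eqref{Eq:Decay2}, while \eqref{Eq:SupEstimate} follows from $H^6(\mathbb B^{11})\hookrightarrow L^\infty(\mathbb B^{11})$ together with $u=rv$; that $T$ is the actual blowup time $T_{u[0]}$ follows because $u\to u^T$ on $\mathcal C_T$ and $u^T$ has a gradient singularity at $(T,0)$. I expect the main obstacle to be the mode-stability claim: ruling out eigenvalues of $\mathcal L$ with $\Re\lambda\ge0$ other than $\lambda=1$ requires a delicate connection-coefficient analysis of the spectral ODE across the singular points $|y|=0$ and $|y|=1$; a secondary technical point is obtaining a strictly negative growth exponent for $\mathcal L_0$ in eleven dimensions, which forces a careful choice of the equivalent inner product and careful bookkeeping of boundary terms in the dissipativity estimate.
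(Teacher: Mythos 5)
Your overall scheme coincides with the paper's: passage to the eleven-dimensional radial wave equation via $u=rv$, similarity variables, a first-order system on $\mathcal H=H^6(\mathbb B^{11})\times H^5(\mathbb B^{11})$, Lumer--Phillips for the free part, compactness of the potential term, mode stability, a rank-one Riesz projection to kill the symmetry instability $\lambda=1$, a modified Duhamel fixed point, and a final choice of $T$ that annihilates the correction. That is exactly the architecture used. However, there are three places where your sketch, as written, has genuine gaps.

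\textbf{Mode stability.} You propose to transform the spectral ODE into a ``hypergeometric-type'' equation and run a connection-coefficient analysis across $\rho=0$ and $\rho=1$. The equation one actually obtains (after the substitution $x=\rho^2$ and a suitable gauge factor) is \emph{Heun's equation} with four regular singular points $\{0,1,155/74,\infty\}$; for Heun there is no closed-form connection formula, so the program ``compute connection coefficients, see when the analytic-at-$0$ solution is analytic at $1$'' does not run. The paper instead first passes to the \emph{supersymmetric} companion problem (which has the same unstable eigenvalues except $\lambda=1$), then studies the two-term recurrence $r_{n+1}=A_n+B_n/r_n$ for the ratios $r_n=a_{n+1}/a_n$ of the Frobenius coefficients at $x=0$, constructs an explicit quasi-solution $\tilde r_n(\lambda)$, and proves $|r_n/\tilde r_n-1|\le 1/3$ for $n\ge7$ on $\overline{\mathbb H}$ via Phragm\'en--Lindel\"of, forcing $r_n\to 1$ rather than $74/155$. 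This shows the normalized solution has radius of convergence exactly $1$ and hence is not analytic at $x=1$, ruling out unstable eigenvalues. You correctly flag this as ``the main obstacle,'' but the method you suggest is not the one that works.

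\textbf{Rank of the Riesz projection.} You write ``Letting $\mathbf P$ be the Riesz projection onto the one-dimensional $\mathbf g$-eigenspace.'' The Riesz projection at an isolated eigenvalue is onto the \emph{algebraic} (generalized) eigenspace; $\dim\ker(1-\mathbf L)=1$ alone does not give $\mathrm{rank}\,\mathbf P=1$, since there could be a Jordan block. The paper proves $\mathrm{rank}\,\mathbf P<\infty$ via compactness of $\mathbf L'$ (stability of the essential spectrum), then argues by contradiction that if $(1-\mathbf L_{\mathcal M})^k=0$ with $k\ge2$, there is $\mathbf u\in\mathcal D(\mathbf L)$ solving $(1-\mathbf L)\mathbf u=-\mathbf g$, and a variation-of-constants computation forces a $(1-\rho)^3\log(1-\rho)$ term that is incompatible with $u_1\in H^6_{\mathrm{rad}}(\mathbb B^{11})$. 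Without this step, the decay estimate on $\rg(1-\mathbf P)$ is not available.

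\textbf{From spectrum to decay.} You assert $\|S(\tau)(1-\mathbf P)\|\lesssim e^{-\omega\tau}$ from the location of $\sigma(\mathbf L)$. In general the growth bound of a $C_0$-semigroup is \emph{not} determined by the spectral bound of its generator, so this step needs justification. Since $\mathcal H$ is a Hilbert space, the right tool is the Gearhart--Pr\"uss theorem; the paper first proves a uniform resolvent bound $\|\mathbf R_{\mathbf L}(\lambda)\|\le 2/\ve$ on $\{\Re\lambda\ge-1+\ve,\ |\lambda|\ge R_\ve\}$ (via a Neumann series around $\mathbf R_{\mathbf L_0}$ and the smallness of $\|\mathbf L'\mathbf R_{\mathbf L_0}(\lambda)\|$ for large $|\lambda|$), and only then invokes Gearhart--Pr\"uss on the reduced semigroup. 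You should include this, or an equivalent argument, explicitly.
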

\begin{remark}
  The normalizing factor on the left-hand side of~\eqref{Eq:Decay1}
  and~\eqref{Eq:Decay2} appears naturally as it reflects the behavior
  of the self-similar solution $u^T$ in the respective Sobolev norm,
  i.e.,
  \begin{align*}
    \big\|\,|\cdot|^{-1}u^T(t,|\cdot|) \big\|_{\dot H^k
      (\mathbb{B}^{11}_{T-t})}
&=\bigg\||\cdot|^{-1}\phi_0\left(\frac{|\cdot|}{T-t}\right)
\bigg\|_{\dot H^k (\mathbb{B}^{11}_{T-t})} \\
&=(T-t)^{\frac{9}{2}-k}\big\||\cdot|^{-1}\phi_0(|\cdot|) \big\|_{\dot H^k (\mathbb{B}^{11}_{1})}
  \end{align*} 
  and
  \begin{align*}
    \big\||\cdot|^{-1}\partial_tu_T(t,|\cdot|) \big\|_{\dot H^l
    (\mathbb{B}^{11}_{T-t})}
    &=(T-t)^{-2}\bigg\|\phi'_0\left(\frac{|\cdot|}{T-t}\right)
      \bigg\|_{\dot H^l (\mathbb{B}^{11}_{T-t})} \\
    &=(T-t)^{\frac{7}{2}-l}\big\|\phi'_0(|\cdot|) \big\|_{\dot H^l (\mathbb{B}^{11}_{1})}.
  \end{align*}
\end{remark}
\begin{remark}
  Since $\phi_0$ is monotonically increasing on $[0,1]$, we have
  \begin{equation}\label{Eq:Max_u}
    \big\| u^T(t,\cdot)\big\|_{ L^{\infty}(0,T-t)}=\max_{\rho\in[0,1]}\|\phi_0(\rho)\|=\phi_0(1).
  \end{equation} 
  Therefore, given $\ve_1>0$, it follows from~\eqref{Eq:SupEstimate}
  and~\eqref{Eq:Max_u} that $\delta$ can be chosen small enough so
  that
  \begin{align*}
    \big\| u(t,\cdot)\big\|_{ L^{\infty}(0,T-t)}
    &\leq \big\| u(t,\cdot) - u^T(t,\cdot)\big\|_{ L^{\infty}(0,T-t)}
      +\big\| u^T(t,\cdot)\big\|_{ L^{\infty}(0,T-t)}\leq\ve_1+\phi_0(1). 
  \end{align*}
  Hence, for $t< T$ the solution $u(t,r)$ stays inside a neighborhood
  of $u=0$ where the metric is given by~\eqref{Def:g}, i.e., the portion of
  the target manifold that participates in the dynamics of the blowup
  solution is described by the metric~\eqref{Def:g}.
\end{remark}

\subsection{Outline of the proof}
We use the method developed in the series of papers
\cite{Donninger11,DonnScho12,DonnScho14,Donninger14,DonnScho16,DonSch16,CDG17,Donninger17}. First,
we introduce the rescaled variables
\begin{equation}\label{Def:RescalVar1}
  v_1(t,r):=\frac{T-t}{r}u(t,r),\quad v_2(t,r):=\frac{(T-t)^2}{r} \partial_t u(t,r).
\end{equation}
Division by $r$ is justified by the boundary condition
\eqref{Eq:BoundaryCondition} and the presence
of the prefactors involving $T-t$ has to do with the change of
variables we subsequently introduce. That is, we introduce similarity
coordinates $(\tau,\rho)$ defined by 
\begin{align}\label{Def:SimCoord}
  \tau := -\log(T-t) + \log T, \quad \rho := \frac{r}{T-t},
\end{align}
and set
\begin{align}\label{Def:RescalVar2}
  \psi_j(\tau,\rho):=v_j(T(1-e^{-\tau}),T e^{-\tau}\rho)
\end{align}
for $j=1,2$. As a consequence, Eq.~\eqref{Eq:11_Dim_Wave} can be written as an abstract
evolution equation,
\begin{equation}\label{Eq:Abstract}
  \partial_{\tau}\Psi(\tau)=\mb L_0\Psi(\tau)+{\mb M}(\Psi(\tau)),
\end{equation}
where $\Psi(\tau)=(\psi_1(\tau,\cdot),\psi_2(\tau,\cdot))$, $\mb L_0$ is the spatial part of the
radial wave operator in the new coordinates, and $\mb M(\Psi(\tau))$ consists of
the remaining nonlinear terms.  The benefit of passing to the new variables
~\eqref{Def:SimCoord} and~\eqref{Def:RescalVar2} is that the backward
lightcone $\mc C_T$ is transformed into a cylinder
\[\mathcal{C}:= \{(\tau,\rho):\tau\in[0,\infty),\rho\in[0,1]\}, \]
the rescaled self-similar blowup solution $u^T$ becomes a
$\tau$-independent 
function $\Psi_{\text{res}}$ (this justifies the presence of
$t$-dependent prefactors in~\eqref{Def:RescalVar1}), and the problem of
stability of blowup transforms into the problem of 
asymptotic stability of a static solution.  We subsequently follow the
standard approach for studying the stability of steady-state solutions and
plug the ansatz $\Psi(\tau)=\Psi_{\text{res}}+\Phi(\tau)$ into
Eq.~\eqref{Eq:Abstract}. This leads to an evolution equation in $\Phi$,
\begin{equation}\label{Eq:AbsEvolPhi}
  \partial_{\tau}\Phi(\tau)=\mb L_0\Phi(\tau)+\mb{L'}\Phi(\tau)+{\mb N}(\Phi(\tau)),
\end{equation}
where $\mb L'$ is the Fr\'echet derivative of $\mb M$ at
$\Psi_{\text{res}}$ and $\mb N(\Phi(\tau))$ is the nonlinear remainder.  We
then proceed by studying Eq.~\eqref{Eq:AbsEvolPhi} as an ordinary
differential equation in a Hilbert space with the norm
\begin{align}\label{Def:Norm}
  \| \mb u \|^{2}  =  \left \| (u_1,u_2)  \right \|^{2}  
:= \| u_{1}(|\cdot|) \|_{H^{6} (\mathbb{B}^{11} )}^{2} + \| u_2(|\cdot|) \|_{H^{5} (\mathbb{B}^{11} ) }^{2}.
\end{align}

However, passing to new variables also comes with a
price. Namely, the radial wave operator $\mb L_0$ is
not self-adjoint. Nonetheless, we establish well-posedness of the
linearized problem (that is, Eq.~\eqref{Eq:AbsEvolPhi} with $\mb N$
removed) by using methods from semigroup
theory. In particular, we use an equivalent norm to~\eqref{Def:Norm}
and the Lumer-Phillips theorem to show that $\mb L_0$ generates a
semigroup $(\mb S_0(\tau))_{\tau \geq 0}$ with a negative growth
bound. This in particular allows for locating the spectrum of
$\mb L_0$. Furthermore, $\mb L'$ is compact so $\mb L:=\mb L_0+\mb L'$
generates a strongly continuous semigroup
$(\mb S(\tau))_{\tau \geq 0}$ and well-posedness of the
linearized problem follows.

The stability of the solution $u^T$ follows from a decay
estimate on the semigroup $\mb S(\tau)$. To obtain such an estimate we
exploit the relation between the growth bound of a semigroup and the
location of the spectrum of its generator. We therefore study
$\sigma(\mb L)$ which, thanks to the compactness of $\mb L'$, amounts to
studying the eigenvalue problem $(\la-\mb L)\mb u=0$. We subsequently
show that $\sigma(\mb L)$ is contained in the left half-plane except
for the point $\la=1$. However, this unstable eigenvalue corresponds
to an apparent instability and we later use it to fix the blowup
time. We therefore proceed by defining a spectral projection $\mb P$
onto the unstable space and study the semigroup $\mb S(\tau)$
restricted to $\rg(1-\mb P)$. Furthermore, we establish a uniform
bound on the resolvent $\mb R_{\mb L}(\la)$ and invoke the
Gearhart-Pr\"uss theorem to obtain a negative growth bound on
$(1-\mb P)\mb S(\tau)$.

Appealing to Duhamel's principle,
we rewrite Eq.~\eqref{Eq:AbsEvolPhi} in the integral form
\begin{equation}\label{Eq:Duhamel1}
  \Phi(\tau)=\mathbf{S}(\tau)\mathbf{U}(\mathbf{v},T)+\int_{0}^{\tau}\mathbf{S}(\tau-s)\mathbf{N}(\Phi(s))ds,
\end{equation}
where $\mb U(\mb v,T)$ represents the rescaled initial data. We remark
that the parameter $T$ does not appear in the equation itself but in
the initial data only. To obtain a decaying solution to
Eq.~\eqref{Eq:Duhamel1} we suppress the unstable part of $\mb S(\tau)$
by introducing a correction term
\[ \mb C(\Phi,\mb U(\mb v,T)):=\mb P\left(\mb U(\mb
  v,T)+\int_{0}^{\infty}e^{-s}\mb N(\Phi(s))ds \right) \]
into Eq.~\eqref{Eq:Duhamel1}. That is, we consider the
modified equation
\begin{align}\label{Eq:Modified1}
  \Phi(\tau)=\mb S(\tau)\big(\mb U(\mb v,T)-\mb C(\Phi,\mb U(\mb v,T))\big)  + \int_0^{\tau} \mb S(\tau - s)  \mb N(\Phi(s)) ds.
\end{align}
We subsequently prove that for a fixed $T_0$ and small enough initial
data $\mb v$, every $T$ close to $T_0$ yields a unique solution to
Eq.~\eqref{Eq:Modified1} that decays to zero at the linear decay
rate. In other words, we prove the existence of a solution curve to
Eq.~\eqref{Eq:Modified1} parametrized by $T$ inside a small
neighborhood of $T_0$, provided $\mb v$ is small enough.

Finally, we use the very presence of the unstable eigenvalue
$\la=1$ to prove the existence of a particular $T$ near $T_0$ for
which $\mb C(\Phi,\mb U(\mb v,T))=0$ and hence obtain a decaying
solution to Eq.~\eqref{Eq:Duhamel1} which, when translated back to the original
coordinates, implies the main result.

\subsection{Notation}
We denote by $\B^{d}_R$ the $d-$dimensional open ball of radius $R$
centered at the origin. For brevity we let $\B^d:=\B^d_1$. We write
2-component vector quantities in boldface, e.g. $\mb u=(u_1,u_2)$. By
$\mathcal{B}(\mathcal{H})$ we denote the space of bounded operators on
the Hilbert space $\mathcal{H}$. We denote by $\sigma(\mb L)$ and
$\sigma_p(\mb L)$ the spectrum and the point spectrum, respectively,
of a linear operator $\mb L$. Also, we denote by $\rho(\mb L)$ the
resolvent set $\C\setminus\sigma(\mb L)$ and use the 
convention $\mb R_{\mb L}(\la):=(\la-\mb L)^{-1}, \la\in\rho(\mb L)$,
for the resolvent operator. 
We use the
symbol $\lesssim$ with the standard meaning: $a\lesssim b$ if there
exists a positive constant $c$, independent of $a,b$, such that
$a\leq cb$. Also, $a\simeq b$ means that both $a\lesssim b$ and
$b\lesssim a$ hold.

\subsection{Similarity coordinates and cylinder formulation}
After introducing the similarity coordinates
\[ \tau := -\log(T-t) + \log T, \quad \rho := \frac{r}{T-t}, \]
and the rescaled variables
\begin{gather*}
  v_1(t,r):=\frac{T-t}{r}u(t,r),\quad v_2(t,r):=\frac{(T-t)^2}{r} \partial_t u(t,r),\\
  \psi_j(\tau,\rho)=v_j(T(1-e^{-\tau}),T\rho
  e^{-\tau}),\quad j=1,2,
\end{gather*}
 we obtain from Eq.~\eqref{Eq:CorWMd9} the first-order system
\begin{align}\label{Eq:MatrixEq}
  \begin{bmatrix} \partial_{\tau}\psi_1\vspace{1mm} \\ \partial_{\tau}\psi_2 \end{bmatrix}
  =\begin{bmatrix} -\rho\partial_\rho\psi_1-\psi_1+\psi_2 \vspace{1mm}\\ \partial^2_{\rho}\psi_1+\frac{10}{\rho}\partial_{\rho}\psi_1-\rho\partial_{\rho}\psi_2-2\psi_2 \end{bmatrix}
  -\begin{bmatrix} 0 \vspace{1mm}\\8( 14\psi_1^3-111\rho^2\psi_1^5)\end{bmatrix}
\end{align}
for $(\tau,\rho)\in\mathcal{C}$. Furthermore, the initial data become
\begin{align}\label{Eq:InitialData}
  \begin{bmatrix} \psi_1(0,\rho) \\ \psi_2(0,\rho) \end{bmatrix} 
  =\frac{1}{\rho}\begin{bmatrix} u_0(T\rho) \\ Tu_1(T\rho) \end{bmatrix}=\frac{1}{\rho}\begin{bmatrix}
    u^{T_0}(0,T\rho)\\ T\partial_{0}u^{T_0}(0,T\rho)\end{bmatrix}+\frac{1}{\rho}\begin{bmatrix}
    F(T\rho)\\TG(T\rho)
  \end{bmatrix}
\end{align}
where $T_0$ is a fixed parameter
and
\begin{align*}
  &F:=u_0-u^{T_0}(0,\cdot), \quad G:=u_1-\partial_{0}u^{T_0}(0,\cdot).
\end{align*}
In addition, we have the regularity conditions
\[ \partial_{\rho}\psi_1(\tau,\rho)|_{\rho=0}=\partial_{\rho}\psi_2(\tau,\rho)|_{\rho=0}=0 \]
for $\tau \geq 0$. 
Note further that we are studying the dynamics around $u^{T_0}$ for a fixed
$T_0$ and thus, it is natural to split the initial data as in
Eq.~\eqref{Eq:InitialData}. The parameter $T$ is assumed to be
close to $T_0$ and will be fixed later. As a consequence,
the proximity of the initial data to $u^{T_0}[0]$ is measured by
$\mb v:=(F,G).$

\subsection{Perturbations of the blowup
  solution}\label{Sec:PerturbedProblem} For convenience, we set
\[ \Psi(\tau)(\rho):=\begin{bmatrix}\psi_1(\tau,\rho)\\
  \psi_2(\tau,\rho)\end{bmatrix}.\]
In the rescaled variables the blowup solution $u^T$ becomes
$\tau$-independent, i.e., 
\begin{equation*}
  \begin{bmatrix}\frac{T-t}{r}u^T(t,r) \vspace{1mm} \\ \frac{(T-t)^2}{r} \partial_t u^T(t,r)\end{bmatrix}=
  \begin{bmatrix}\frac{1}{\rho}\phi_0(\rho) \vspace{1mm}\\\phi'_0(\rho)\end{bmatrix}=:\Psi_{\text{res}}(\tau)(\rho).
\end{equation*}
We proceed by studying the dynamics of Eq.~\eqref{Eq:MatrixEq} around
$\Psi_{\text{res}}$. Our aim is to prove the asymptotic stability of
$\Psi_{\text{res}}$ which in turn translates into the appropriate
notion of stability of $u^T$. We therefore follow the standard method
and plug the ansatz $\Psi=\Psi_{\text{res}}+\Phi$ into
Eq.~\eqref{Eq:MatrixEq}, where
$\Phi(\tau)(\rho):=(\vp_1(\tau,\rho),\vp_2(\tau,\rho))$. This leads to
an evolution equation for the perturbation $\Phi$,
\begin{equation}\label{Eq:PerturbedProblem}
  \begin{cases}
    \partial_{\tau}\Phi(\tau)=\widetilde{\bf{L}}\Phi(\tau)+{\bf{N}}(\Phi(\tau)),\\
    \Phi(0)={\bf{U}}(\textbf{v},T),
  \end{cases}
\end{equation}
where $\widetilde{\mb L}$ and $\mb N$ are spatial operators and
$\mb U(\mb v, T)$ are the initial data. More precisely,
$ \widetilde{\bf{L}}:=\widetilde{\bf{L}}_0+{\bf{L}'},$ where
\begin{gather}
  \widetilde{\bf{L}}_0\mb u(\rho):=
  \begin{bmatrix}\label{Eq:L0}
    -\rho u_1'(\rho)-u_1(\rho)+u_2(\rho) \vspace{1.5mm}\\
    u_1''(\rho)+\frac{10}{\rho}u_1'(\rho)-\rho u_2'(\rho)-2u_2(\rho)
  \end{bmatrix},\\
  {\bf{L}'}\mb u(\rho):=
  \begin{bmatrix}
    0\\
    W(\rho,\phi_0(\rho))u_1(\rho)
  \end{bmatrix}\label{Def:L'}
\end{gather}
and
\begin{align}
  {\bf{N}}(\mb u)(\rho):=
  \begin{bmatrix}
    0\\
    N(\rho,u_1(\rho))
  \end{bmatrix}\label{Eq:Nonlinearity}
\end{align}
for a 2-component function $\mb u(\rho)=(u_1(\rho),u_2(\rho))$, where
\begin{align}\label{Eq:WandN}
  \begin{split}
    &N(\rho,u_1(\rho))=-\frac{8}{\rho^3}[n(\phi_0(\rho)+\rho
    u_1(\rho))-n(\phi_0(\rho))-n'(\phi_0(\rho))\rho u_1(\rho)]\quad
    \text{and}\\
    &W(\rho,\phi_0(\rho))=-\frac{8}{\rho^2}n'(\phi_0(\rho))\quad\text{for}
    \quad n(x)=14x^3-111x^5.
  \end{split}
\end{align}
Also, we write the initial data as
\begin{equation}\label{Eq:InitialDataU}
  \Phi(0)(\rho)=\mb U(\mb v,T)(\rho)=\begin{bmatrix}\frac{1}{\rho}\phi_0\left(\frac{T}{T_0}\rho\right) \vspace{1mm}\\\frac{T^2}{T^2_0}\phi'_0\left(\frac{T}{T_0}\rho\right)\end{bmatrix}-\begin{bmatrix}\frac{1}{\rho}\phi_0(\rho) \vspace{1mm}\\\phi'_0(\rho)\end{bmatrix}+\mb V(\mb v,T)(\rho)
\end{equation}
where
\begin{equation*}
  \mb V(\mb v,T)(\rho):=\begin{bmatrix}\frac{1}{\rho}F(T\rho) \vspace{1mm}\\\frac{T}{\rho}G(T\rho)\end{bmatrix}, \quad \mb v=\begin{bmatrix}F \\ G\end{bmatrix}.
\end{equation*}

\subsection{Strong lightcone solutions and blowup time at the
  origin}

To proceed, we need the notion of a solution to the
problem~\eqref{Eq:PerturbedProblem}. In Section \ref{Sec:FunctSetting}
we introduce the space
\[ \mathcal{H}:= H^6_{\text{rad}}(\mathbb{B}^{11}) \times
H^5_{\text{rad}}(\mathbb{B}^{11})\]
and prove that the closure of the operator $\widetilde{\mb L}$, defined
on a suitable domain, generates a strongly continuous semigroup
$\mb S(\tau)$ on $\mathcal{H}$. Consequently, we formulate the
problem~\eqref{Eq:PerturbedProblem} as an abstract integral equation
via Duhamel's formula,
\begin{equation}\label{Eq:Duhamel}
  \Phi(\tau)=\mathbf{S}(\tau)\mathbf{U}(\mathbf{v},T)+\int_{0}^{\tau}\mathbf{S}(\tau-s)\mathbf{N}(\Phi(s))ds.
\end{equation}
This in particular establishes the well-posedness of the
problem~\eqref{Eq:PerturbedProblem} in $\mathcal{H}$. We are now in
the position to introduce the following definitions.
\begin{definition}\label{Def:Solution}
  We say that $u :\mathcal{C}_T\rightarrow\mathbb{R}$ is a
  \emph{solution} to ~\eqref{Eq:CorWMd9} if the corresponding
  $\Phi:[0,\infty)\rightarrow\mathcal{H}$ belongs to
  $C([0,\infty);\mathcal{H})$ and satisfies~\eqref{Eq:Duhamel} for all
  $\tau\geq0$.
\end{definition}
\begin{definition}\label{Def:BlowupTime}
  For the radial initial data $(u_0,u_1)$ we define
  $\mathcal{T}(u_0,u_1)$ as the set of all $T>0$ such that there
  exists a solution $u:\mathcal{C}_T\rightarrow\R$
  to~\eqref{Eq:CorWMd9}. We call
  \begin{equation}
    T_{(u_0,u_1)}:=\sup \left (\mathcal{T}(u_0,u_1)\cup\{0\}\right )
  \end{equation}
  the \emph{blowup time at the origin}.
\end{definition}

\subsection{Functional setting}\label{Sec:FunctSetting}

We consider radial Sobolev functions
$\hat{u}:\B^{11}_R\rightarrow\mathbb{C}$, i.e., $\hat{u}(\xi)=u(|\xi|)$
for $\xi\in\B^{11}_R$ and some $u: [0,R) \to \C$. We furthermore
define
$$u\in H_{\text{rad}}^m(\B^{11}_R) \enskip \text{if and only if} \enskip \hat{u}\in H^m(\B^{11}_R):=W^{m,2}(\B^{11}_R).$$
With the norm
\[
\|u\|_{H_{\text{rad}}^m(\B^{11}_R)}:=\|\hat{u}\|_{H^m(\B^{11}_R)}, \]
$H_{\text{rad}}^m(\B^{11}_R)$ becomes a Banach space.  In the rest of
this paper we do not distinguish between $u$ and $\hat{u}$. Now we
define the Hilbert space
\[ \mathcal{H}:= H^6_{\text{rad}}(\mathbb{B}^{11}) \times
H^5_{\text{rad}}(\mathbb{B}^{11}),\] with the induced norm
\begin{align*}
  \| \mathbf{u} \|^{2}  =  \left \| (u_{1},u_{2})  \right \|^{2}  := \| u_{1} \|_{H_{\text{rad}}^{6} (\mathbb{B}^{11} )}^{2} + \| u_{2} \|_{H_{\text{rad}}^{5} (\mathbb{B}^{11} ) }^{2}.
\end{align*}

\subsection{Well-posedness of the linearized equation}\label{Sec:Semigroup}
To establish well-posedness of the problem~\eqref{Eq:PerturbedProblem}
we start by defining the domain of the free operator
$\widetilde{\textbf{L}}_0$, see Eq.~\eqref{Eq:L0}. We follow
\cite{DonSch16} and let
\[ \mathcal{D}(\widetilde{\textbf{L}}_0):=\{ \mb u\in
C^{\infty}(0,1)^2\cap \mathcal{H}:w_1\in C^3[0,1],
w''_1(0)=0, w_2\in C^2 [0,1]\}\]
where
\begin{equation*}
  w_j(\rho):=D_{11}u_j(\rho):=\left(\frac{1}{\rho}\frac{d}{d\rho}\right)^{4}(\rho^9u_j(\rho))=\sum_{n=0}^{4}c_n
  \rho^{n+1}u^{(n)}_j(\rho).\end{equation*}
for certain positive constants $c_n$, $\rho\in[0,1]$, and $j=1,2$. Since $C^{\infty}(\overline{\B^{11}})$ is dense in $H^m(\B^{11})$,
\begin{align*}
  C_{\text{even}}^{\infty} [0,1]^2 := \Big \{  \mathbf{u} \in C^{\infty} [0,1]^2:~~\mathbf{u}^{(2k+1)}(0)=0,~~k=0,1,2,\dots \Big \}  \subset \mathcal{D} ( \widetilde{ \mathbf{L} }_{0})
\end{align*}
is dense in $\mathcal{H}$, which in turn implies that
$\widetilde{\textbf{L}}_0$ is densely defined on $\mathcal{H}.$
Furthermore, we have the following result.
\begin{proposition}\label{S0estimate}
  The operator
  $\widetilde{\bf{L}}_0:\mathcal{D}(\widetilde{\bf{L}}_0)\subset\mathcal{H}\rightarrow\mathcal{H}$
  is closable and its closure
  ${\bf{L}}_0:\mathcal{D}({\bf{L}}_0)\subset\mathcal{H}\rightarrow\mathcal{H}$
  generates a strongly continuous one-parameter semigroup
  $({\bf{S}}_0(\tau))_{\tau\geq0}$ of bounded operators on
  $\mathcal{H}$ satisfying the growth estimate
  \begin{equation}\label{Eq:S0Estimate}
    \|{\bf{S}}_0(\tau)\|\leq Me^{-\tau}
  \end{equation}
for all $\tau\geq 0$ and some $M>0$.
  Furthermore, the operator
  ${\bf{L}}:={\bf{L}}_0+{\bf{L}}':\mathcal{D}({\bf{L}})\subset\mathcal{H}\rightarrow\mathcal{H}$,
  $\mathcal{D}({\bf{L}})=\mathcal{D}({\bf{L}}_0)$, is the generator of
  a strongly continuous semigroup $({\bf{S}}(\tau))_{\tau\geq0}$ on
  $\mc H$ and $\mb L':\mc H\rightarrow \mc H$ is compact.
\end{proposition}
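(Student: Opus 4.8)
The plan is to verify the hypotheses of the Lumer--Phillips theorem for a suitable accretive modification of $\widetilde{\mb L}_0$, and then treat $\mb L'$ as a relatively compact (in fact bounded) perturbation via the Bounded Perturbation Theorem. First I would record that $C^\infty_{\text{even}}[0,1]^2$ is a core, so it suffices to work with smooth $\mb u$. The key algebraic input is that the differentiation operator $D_{11}$ conjugates the radial Laplacian in $11$ dimensions into a one-dimensional half-wave type operator: if $w_j = D_{11}u_j$ then $u_1'' + \tfrac{10}{\rho}u_1'$ transforms, after applying $D_{11}$, into $w_1'' + \tfrac{2}{\rho}w_1'$ (the radial Laplacian in $3$ dimensions), and similarly the first-order transport terms $-\rho\partial_\rho$ are essentially preserved by $D_{11}$ up to a shift in the zeroth-order coefficient. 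This is the standard ``lifting'' trick from \cite{DonSch16,CDG17}: one passes to the variables $(w_1,w_2)$ in which the norm $\|\mb u\|_{\mathcal H}$ is equivalent to $\|w_1\|_{L^2(\B^3)}^2 + \|w_1'\|_{L^2(\B^3)}^2 + \|w_2\|_{L^2(\B^3)}^2$ type expression (after also controlling the lower-order pieces), and in which $\widetilde{\mb L}_0$ becomes a first-order symmetric-hyperbolic-like operator with an explicit, favorable zeroth-order part.

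Next I would carry out the dissipativity estimate. Choosing the \emph{equivalent} inner product on $\mathcal H$ adapted to the $(w_1,w_2)$ variables (this is where an equivalent norm, as announced in the outline, is essential), I would compute $\mathrm{Re}\,\langle \widetilde{\mb L}_0\mb u,\mb u\rangle$ by integration by parts on $[0,1]$. The transport terms $-\rho\partial_\rho$ produce, after integration by parts, a boundary term at $\rho=1$ with a favorable sign (because $\rho>0$ there) plus a zeroth-order contribution $+\tfrac12$ per derivative; combined with the explicit damping constants $-1$ and $-2$ in \eqref{Eq:L0} and the extra constants generated by commuting $D_{11}$ through, these should add up to give $\mathrm{Re}\,\langle \widetilde{\mb L}_0\mb u,\mb u\rangle \le -\|\mb u\|^2$ (or at least $\le -\omega\|\mb u\|^2$ with $\omega$ arbitrarily close to $1$, perhaps after subtracting a suitable multiple of the identity and bounding it back), for $\mb u$ in the core. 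One then needs the range condition: $(\la - \widetilde{\mb L}_0)$ has dense range for some $\la>0$ (equivalently, for $\la$ in a right half-plane). This amounts to solving an explicit linear ODE system on $(0,1)$ with a prescribed regular singular point at $\rho=0$ and a (removable, by the transport structure) singular point at $\rho=1$; one constructs the solution by Frobenius analysis at $\rho=0$, picks the regular branch, and checks it lies in $\mathcal H$ and in $\mathcal D(\widetilde{\mb L}_0)$ for $\la$ large. Lumer--Phillips then gives that the closure $\mb L_0$ generates a contraction semigroup in the equivalent norm, hence $\|\mb S_0(\tau)\|\le Me^{-\tau}$ in the original norm \eqref{Def:Norm}; this is exactly \eqref{Eq:S0Estimate}.

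For the second assertion, I would show $\mb L':\mathcal H\to\mathcal H$ is bounded and compact. From \eqref{Def:L'}, $\mb L'\mb u = (0,\, W(\cdot,\phi_0)u_1)$ with $W(\rho,\phi_0(\rho)) = -\tfrac{8}{\rho^2}n'(\phi_0(\rho))$ and $n(x)=14x^3-111x^5$, so $n'(\phi_0(\rho)) = 42\phi_0(\rho)^2 - 555\phi_0(\rho)^4$. Since $\phi_0(\rho) = 3\rho/\sqrt{2(155-74\rho^2)}$ is smooth on $[0,1]$ and vanishes to first order at $\rho=0$, the factor $\phi_0(\rho)^2/\rho^2$ is smooth on $[0,1]$, hence $W(\cdot,\phi_0)\in C^\infty[0,1]$. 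Multiplication by a $C^\infty[0,1]$ function maps $H^6_{\text{rad}}(\B^{11})$ into $H^6_{\text{rad}}(\B^{11})$, and the inclusion $H^6_{\text{rad}}(\B^{11})\hookrightarrow H^5_{\text{rad}}(\B^{11})$ is compact by Rellich--Kondrachov; therefore $\mb u\mapsto(0,W u_1)$ is compact from $\mathcal H$ to $\mathcal H$. Then $\mb L = \mb L_0 + \mb L'$ with $\mb L'$ bounded, so by the Bounded Perturbation Theorem $\mb L$ generates a strongly continuous semigroup $(\mb S(\tau))_{\tau\ge0}$ on $\mathcal H$ with $\mathcal D(\mb L)=\mathcal D(\mb L_0)$.

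The main obstacle is the dissipativity computation in the equivalent norm: getting the \emph{right} equivalent inner product so that all the zeroth-order constants produced by conjugating with $D_{11}$ and by integrating the transport terms by parts conspire to a strictly negative quadratic form with growth bound exactly (or arbitrarily close to) $-1$, while keeping the boundary terms at $\rho=1$ under control, is delicate and dimension-specific ($d=9$, i.e.\ the $11$-dimensional wave equation). Everything else — the core/density statements, the Frobenius analysis for the range condition, and the compactness of $\mb L'$ — is routine given the structure already set up in the excerpt and the cited works \cite{DonSch16,CDG17,Donninger17}.
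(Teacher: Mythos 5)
Your overall strategy --- conjugation by $D_{11}$ to an equivalent first--order formulation, a Lumer--Phillips dissipativity estimate in the resulting equivalent norm to get the free semigroup bound \eqref{Eq:S0Estimate}, and then the Bounded Perturbation Theorem together with a multiplication--plus--compact--embedding argument for $\mb L'$ --- is exactly the route the paper takes; the paper's ``proof'' is a one--line pointer to Proposition~3.1 of \cite{ChaDonnGlo17}, which proceeds precisely along these lines. Your compactness argument for $\mb L'$ is correct and complete: $W(\cdot,\phi_0)\in C^\infty[0,1]$ because $\phi_0(\rho)/\rho$ is smooth on $[0,1]$, multiplication by a smooth function is bounded $H^6_{\text{rad}}(\B^{11})\to H^6_{\text{rad}}(\B^{11})$, and the embedding $H^6_{\text{rad}}(\B^{11})\hookrightarrow H^5_{\text{rad}}(\B^{11})$ is compact.

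There is, however, one concrete inaccuracy in the conjugation step. You assert that $D_{11}$ turns the eleven--dimensional radial Laplacian $\partial_\rho^2+\tfrac{10}{\rho}\partial_\rho$ into the \emph{three}--dimensional one $\partial_\rho^2+\tfrac{2}{\rho}\partial_\rho$. It in fact reduces it to the \emph{one}--dimensional operator $\partial_\rho^2$: writing $A:=\tfrac1\rho\partial_\rho$ and $M_{\rho^9}$ for multiplication by $\rho^9$, the commutation relations $A\,\rho\partial_\rho=(\rho\partial_\rho+2)A$ and $M_{\rho^9}\,\rho\partial_\rho=(\rho\partial_\rho-9)M_{\rho^9}$ give $D_{11}\big(\partial_\rho^2+\tfrac{10}{\rho}\partial_\rho\big)=\partial_\rho^2\,D_{11}$ and $D_{11}\,\rho\partial_\rho=(\rho\partial_\rho-1)\,D_{11}$. (A quick check: $u_1=\rho^2$ gives $\Delta_{11}u_1=22$, $w_1=D_{11}u_1=3465\rho^3$, and indeed $D_{11}(22)=22\cdot 945\,\rho=20790\,\rho=w_1''$, whereas $w_1''+\tfrac{2}{\rho}w_1'\neq 20790\,\rho$.) This also matches the domain condition in $\mathcal D(\widetilde{\mb L}_0)$: the constraints $w_1\in C^3[0,1]$, $w_1''(0)=0$, $w_2\in C^2[0,1]$ describe odd one--dimensional profiles on $[0,1]$, not three--dimensional radial ones, and the equivalent norm is built from one--dimensional Sobolev norms of $w_1,w_2$. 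The correction does not disturb your plan --- the dissipativity computation actually becomes simpler with $\partial_\rho^2$ than with a three--dimensional radial Laplacian --- but the zeroth--order constants produced by the commutation ($-1$ from the transport term, together with the explicit $-1$ and $-2$ in \eqref{Eq:L0}) are precisely what conspire to yield the stated growth bound $-1$, so the bookkeeping must be done with the correct identities.
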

\begin{proof}
  The proof essentially follows the one of Proposition 3.1 in
  \cite{ChaDonnGlo17} for $d=9$.
\end{proof}
\subsection{The spectrum of the free operator}
By exploiting the relation between the growth bound of a semigroup and
the spectral bound of its generator, we can locate the spectrum of the
operator $\mb L_0$. Namely, according to \cite{EngelNagel00}, p. 55,
Theorem 1.10, the estimate~\eqref{Eq:S0Estimate} implies
\begin{equation}\label{Eq:FreeSpectrum}
  \sigma({\mb L_0})\subseteq\{ \lambda\in\mathbb{C}: \Re \lambda \leq-1 \} .
\end{equation}

\subsection{The spectrum of the full linear operator}
To understand the properties of the semigroup $\mb S(\tau)$ we
investigate the spectrum of the full linear operator $\mb L$. First of
all, we remark that $\la=1$ is an eigenvalue of $\mb L$ (see
Sec. \ref{Sec:g}), which is an artifact of the freedom of choice of
the parameter $T$, see e.g. \cite{CDG17} for a discussion on
this. What is more, $\la=1$ is the only spectral point of $\mb L$ with
a non-negative real part. To prove this we first focus on the point
spectrum.
\begin{proposition}\label{Prop:SpectrumL}
  We have
  \begin{equation}
    \sigma_p({\bf{L}})\subseteq\{ \lambda\in\mathbb{C}: \Re \lambda <0 \} \cup \{1\}.
  \end{equation}
\end{proposition}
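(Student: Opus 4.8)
The plan is to translate the abstract eigenvalue equation $(\la-\mb L)\mb u = 0$ into a scalar ODE. Writing $\mb u = (u_1, u_2)$ and using the explicit forms of $\widetilde{\mb L}_0$ in \eqref{Eq:L0} and $\mb L'$ in \eqref{Def:L'}, the first component gives $u_2 = (\la+1)u_1 + \rho u_1'$, and substituting into the second component yields a second-order linear ODE for $u_1$ of the form
\begin{equation*}
  (1-\rho^2)u_1'' + \left(\frac{10}{\rho} - 2(\la+2)\rho\right)u_1' - \left[(\la+1)(\la+2) - W(\rho,\phi_0(\rho))\right]u_1 = 0,
\end{equation*}
where the potential term $W(\rho,\phi_0(\rho)) = -\tfrac{8}{\rho^2}n'(\phi_0(\rho))$ with $n(x) = 14x^3 - 111x^5$ is, because $\phi_0(\rho) = 3\rho/\sqrt{2(155-74\rho^2)}$ vanishes linearly at the origin, actually a smooth (in fact rational) function of $\rho^2$ on $[0,1]$. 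So the ODE has regular singular points at $\rho=0$ and $\rho=1$ and no other singularities in a neighborhood of $[0,1]$. The key point is that an element $\mb u\in\mathcal H$ in the domain forces $u_1$ to be smooth on $[0,1]$, hence to be the Frobenius solution at $\rho=0$ that is regular (the indicial exponents at $\rho=0$ are $0$ and $-9$, so regularity kills the $\rho^{-9}$ branch) \emph{and} to be analytic across $\rho=1$ (the indicial exponents at $\rho=1$ are $0$ and a $\la$-dependent one, typically $\tfrac{9}{2}-\la$ or similar, so regularity there is again a one-dimensional condition). An eigenvalue is then exactly a value of $\la$ for which these two one-dimensional regularity conditions are simultaneously satisfied, i.e. a connection-type condition.

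**Removing the unstable half-plane.** First I would handle $\Re\la \geq 0$, $\la\neq 1$. The natural tool is a Kato-type / weighted-energy identity: multiply the ODE for $u_1$ by an appropriate weight and integrate over $[0,1]$, producing a boundary term at $\rho=1$ that is controlled (and vanishes, or has a definite sign) precisely because $u_1$ is regular at both endpoints, plus a bulk term whose sign is governed by $\Re\la$ together with the sign of the potential $W$. Because the target is negatively curved, one expects $n'(\phi_0(\rho)) > 0$ on the relevant range and hence $W < 0$, which is the "good sign" that, combined with $\Re\la\geq 0$, makes the quadratic form definite and forces $u_1\equiv 0$ — except that the mode $\la=1$ must survive, corresponding to the symmetry-induced eigenfunction. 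I would check that $\la=1$ produces a genuine obstruction to the energy argument (the boundary term degenerates or the weight blows up there), and that for $\Re\la\geq 0$, $\la\neq 1$ the identity is clean. An alternative, if the energy identity is messy, is to exploit the growth bound on $\mb S_0$: since $\sigma(\mb L_0)\subseteq\{\Re\la\leq -1\}$ by \eqref{Eq:FreeSpectrum} and $\mb L'$ is compact, $\sigma(\mb L)$ in $\{\Re\la > -1\}$ consists only of eigenvalues, so it suffices to rule out eigenvalues there; but one still needs the ODE analysis to do so.

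**The strip $-1/2 \le \Re\la < 0$ (or wherever the compactness argument alone does not close).** On the region $\Re\la < 0$ one must still exclude eigenvalues, and here the honest approach is again the ODE: show that the connection coefficient relating the regular-at-$0$ solution to the regular-at-$1$ solution is nonzero for all such $\la$. For many of these self-similar problems the cleanest route is to find the solution at $\la=1$ (and possibly $\la = $ other small integers or half-integers) \emph{explicitly} — one expects something like $u_1(\rho) = \phi_0'(\rho)$ or a simple rational function, coming from differentiating the one-parameter family $\phi_0$ under the scaling symmetry — use it as one fundamental solution, reduce the order, and thereby get a closed form for the second solution and hence for the connection function as an explicit (rational or hypergeometric-type) expression in $\la$; then one reads off that its only zero with $\Re\la\geq 0$ is $\la=1$.

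**Main obstacle.** The hard part will be establishing that there are \emph{no} eigenvalues in the strip $\{-\text{(spectral gap)} < \Re\la < 0\}$ and, more sharply, none with $\Re\la \geq 0$ besides $\la=1$: the compactness of $\mb L'$ only reduces matters to a countable eigenvalue search, and the energy method gives the half-plane $\Re\la\geq 0$ only if the potential sign and the endpoint boundary terms cooperate exactly — any slack there, or the delicate borderline behavior at $\la=1$ itself, has to be resolved by the explicit ODE analysis. So the crux is the construction of an explicit solution of the Frobenius equation at the symmetry value and the subsequent exact evaluation (or at least sign analysis) of the Wronskian/connection coefficient as a function of $\la$; the rest — the reduction to the ODE, the indicial computations, and the density/domain bookkeeping — is routine given Proposition~\ref{S0estimate}.
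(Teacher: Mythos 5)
Your reduction to the scalar ODE for $u_1$, the Frobenius indicial analysis, and the conclusion that any eigenfunction must be $C^\infty[0,1]$ all match the paper's proof (the indicial exponents at $\rho=1$ are $0$ and $4-\lambda$, not $\tfrac{9}{2}-\lambda$, but that is a detail). The genuine gap is the mode stability step, which you correctly flag as the crux but do not actually close, and both routes you propose are unlikely to work. The weighted energy / Kato-type identity fails because the ODE has a $\lambda$-dependent first-order coefficient and a degenerate singular endpoint at $\rho=1$; it is not a self-adjoint Sturm--Liouville problem, there is no known weight making the quadratic form definite with cooperating boundary terms, and this is exactly why the supercritical wave-map literature does not proceed this way. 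The explicit connection-coefficient route fares no better: after the substitutions the equation is of Heun type, which has no closed-form connection theory, and the explicit eigenfunction $\phi_0'$ at $\lambda=1$ lets you reduce the order only at that single value of $\lambda$, not as a function of $\lambda$.

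What the paper actually does is the quasi-solution method of \cite{CDGH16,CDG17}: substitute $v(\rho)=\rho u_1(\rho)$, pass to the supersymmetric partner equation \eqref{eq:SUSY}, change variables to bring it into Heun normal form \eqref{eq:heunform}, expand the normalized analytic-at-$0$ solution as $\sum_{n} a_n(\lambda)x^n$, derive the three-term recurrence \eqref{eq:rec_a}, and compare the ratio $r_n=a_{n+1}/a_n$ to an explicitly guessed rational ``quasi-solution'' $\tilde r_n(\lambda)$ of the associated Riccati-type recurrence. A quantitative induction, seeded by a Phragm\'en--Lindel\"of estimate for the initial discrepancy $\delta_7=r_7/\tilde r_7-1$, shows $r_n(\lambda)\to1$ (rather than the alternative limit $74/155$) for every $\lambda\in\Hb$; hence the power series has radius of convergence exactly $1$ and cannot represent a function analytic at $x=1$, so there is no unstable eigenvalue other than $\lambda=1$. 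This concrete engine is what your sketch is missing.
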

\begin{proof} We argue by contradiction and assume there exists a
  $\la\in\sigma_p(\mb L)\setminus \{1\}$ with $\Re\la\geq0.$ This
  means that there exists a
  $\mb u=(u_1,u_2)\in \mathcal{D}(\mb L)\setminus\{0\}$ such that
  $\mb u\in \text{ker}(\la-\mb L).$ The spectral equation
  $(\la-\mb L)\mb u=0$ implies that the first component $u_1$
  satisfies the equation
  \begin{equation}\label{Eq:Eigenv}
    (1-\rho^2)u_1''(\rho)+\left(\frac{10}{\rho}-2(\la+2)\rho \right)u_1'(\rho)-(\la+1)(\la+2)u_1(\rho)-V(\rho)u_1(\rho)=0
  \end{equation}
  for $\rho\in(0,1)$, where
  \[
  V(\rho):=-W(\rho,\phi_0(\rho))=\frac{8n'(\phi_0(\rho))}{\rho^2}=-\frac{54(3737\rho^2-4340)}{(155-74\rho^2)^2}. \]
  Since $\mb u \in \mathcal{H}$, $u_1$ must be an element of
  $H_{\text{rad}}^{6}(\mathbb{B}^{11}).$ From the smoothness of the
  coefficients in~\eqref{Eq:Eigenv} we have an a priori regularity
  $u_1\in C^{\infty}(0,1)$.  In fact, we claim that
  $u_1\in C^{\infty}[0,1]$.  To show this, we use the Frobenius
  method. Namely, both $\rho=0$ and $\rho=1$ are regular singularities
  of Eq.~\eqref{Eq:Eigenv} and Frobenius' theory gives a series form of
  solutions locally around singular points.

  The Frobenius indices at $\rho=0$ are $s_1=0$ and
  $s_2=-9$. Therefore, two independent solutions of
  Eq.~\eqref{Eq:Eigenv} have the form
  \begin{equation*}
    u_1^1(\rho)=\sum_{i=0}^{\infty}a_i\rho^i \quad \text{and} \quad u_1^2(\rho)=C\log(\rho)u_1^1(\rho)+\rho^{-9}\sum_{i=0}^{\infty}b_i\rho^i
  \end{equation*}
  for some constant $C\in\mathbb{C}$ and $a_0=b_0=1$. Since $u_1^1(\rho)$ is
  analytic at $\rho=0$ and $u_1^2(\rho)$ does not belong to
  $H_{\text{rad}}^{6}(\mathbb{B}^{11}),$ we conclude that $u_1$ is a
  multiple of $u_1^1$ and therefore, $u_1\in C^{\infty}[0,1)$.
  
  The Frobenius indices at $\rho=1$ are
  $s_1=0$ and $s_2=4-\la$, and we distinguish different cases.  If
  $4-\la \notin \mathbb{Z}$ then the two linearly independent
  solutions are
  \begin{equation*}
    u_1^1(\rho)=\sum_{i=0}^{\infty}a_i(1-\rho)^i \quad \text{and} \quad u_1^2(\rho)=(1-\rho)^{4-\la}\sum_{i=0}^{\infty}b_i(1-\rho)^i
  \end{equation*} 
  with $a_0=b_0=1$. Since $u_1^1(\rho)$ is analytic at $\rho=1$ and
  $u_1^2$ does not belong to $H_{\text{rad}}^{6}(\mathbb{B}^{11}),$ we
  conclude that $u_1\in C^{\infty}[0,1]$. If $4-\la \in \mathbb{N}_0$,
  then the fundamental solutions around $\rho=1$ are of the form
  \begin{equation*}
    u_1^1(\rho)=(1-\rho)^{4-\la}\sum_{i=0}^{\infty}a_i(1-\rho)^i \quad \text{and} \quad u_1^2(\rho)=\sum_{i=0}^{\infty}b_i(1-\rho)^i + C\log(1-\rho)u_1^1(\rho),
  \end{equation*}
  with $a_0=b_0=1$. Since $u_1^1(\rho)$ is analytic at $\rho=1$ and
  $u_1^2$ does not belong to $H_{\text{rad}}^{6}(\mathbb{B}^{11})$
  unless $C=0$, we again conclude that $u_1\in
  C^{\infty}[0,1]$.
  Finally, if $4-\la$ is a negative integer, the linearly independent
  solutions around $\rho=1$ are
  \begin{equation*}
    u_1^1(\rho)=\sum_{i=0}^{\infty}a_i(1-\rho)^i \quad \text{and} \quad u_1^2(\rho)=(1-\rho)^{4-\la}\sum_{i=0}^{\infty}b_i(1-\rho)^i+C\log(1-\rho)u_1^1(\rho),
  \end{equation*}
  with $a_0=b_0=1$. Once again, since $u_1^1(\rho)$ is analytic at
  $\rho=1$ and $u_1^2$ is not a member of
  $H_{\text{rad}}^{6}(\mathbb{B}^{11})$, we infer that
  $u_1\in C^{\infty}[0,1]$.

 To obtain the desired contradiction, it remains to prove
  that Eq.~\eqref{Eq:Eigenv} does not have a solution in
  $C^{\infty}[0,1]$ for $\Re\la\geq 0$ and $\la\neq1$. This claim goes
  under the name of the \emph{mode stability} of the solution $u^T$. A
  general approach to proving mode stability of explicit
  self-similar blowup solutions to nonlinear wave equations of the
  type~\eqref{Eq:CorWMgen} was developed in \cite{CDGH16,CDG17}. We
  argue here along the lines of \cite{CDG17}. Also, for the rest of
  the proof, we follow the terminology of \cite{CDG17}. Namely, we
  call $\la\in \C$ an \emph{eigenvalue} if it yields a
  $C^{\infty}[0,1]$ solution to the equation in question. Also, if an
  eigenvalue $\la$ satisfies $\Re\la\geq0$ we say it is
  \emph{unstable}, otherwise we call it \emph{stable}. Our aim is
  therefore to prove that, apart from $\la=1$, there are no unstable
  eigenvalues of the problem~\eqref{Eq:Eigenv}.

  First of all, we make the substitution
  $v(\rho)=\rho u_1(\rho)$. This leads to the equation
  \begin{equation}\label{eq:eigen1}
    (1-\rho^2)v''(\rho)+\left(\frac{8}{\rho}-2(\la+1)\rho \right)v'(\rho)-\la(\la+1)v(\rho)-\hat{V}(\rho)v(\rho)=0,
  \end{equation}
  where
  \[
  \hat{V}(\rho):=-\frac{10(15799\rho^4-5084\rho^2-19220)}{\rho^2(155-74\rho^2)^2}. \]
  Now we formulate the corresponding supersymmetric problem,
  \begin{equation}\label{eq:SUSY}
    (1-\rho^2)\tilde{v}''(\rho)+\left(\frac{8}{\rho}-2(\la+1)\rho \right)\tilde{v}'(\rho)-(\la+2)(\la-1)\tilde{v}(\rho)-\tilde{V}(\rho)\tilde{v}(\rho)=0,
  \end{equation}
  where
  \[
  \tilde{V}(\rho):=-\frac{18(3737\rho^4+5735\rho^2-24025)}{\rho^2(155-74\rho^2)^2}, \]
see~\cite{CDG17}, Sec.~3.2, for the derivation.
  We claim that, apart from $\la=1$, Eqs.~\eqref{eq:eigen1}
  and~\eqref{eq:SUSY} have the same set of unstable eigenvalues. This
  is proved by a straightforward adaptation of the proof of
 Proposition 3.1 in
  \cite{CDG17}.

   To establish the nonexistence of unstable eigenvalues of
  the supersymmetric problem~\eqref{eq:SUSY} we follow the proof of
  Theorem 4.1 in \cite{CDG17}.  We start by introducing the change of
  variables
  \begin{equation}\label{eq:change_of_var}
    x=\rho^2, \quad \tilde{v}(\rho)=\frac{x}{\sqrt{155-74x}}y(x).
  \end{equation}
Eq.~\eqref{eq:SUSY} transforms into Heun's equation in its
  canonical form,
  \begin{equation}\label{eq:heunform}
    y''(x)+\left(\frac{13}{2x}+\frac{\la-3}{x-1}-\frac{74}{74x-155}\right)y'(x)
+\frac{74\la(\la+3)x-(155\la^{2}+775\la+1656)}{4x(x-1)(74x-155)}y(x)=0.
  \end{equation} \normalsize
  Note that~\eqref{eq:change_of_var} preserves the analyticity of solutions at 0 and 1, and consequently, equations~\eqref{eq:SUSY} and~\eqref{eq:heunform} have the same set of eigenvalues.
  The Frobenius indices of Eq.~\eqref{eq:heunform} at $x=0$ are $s_1=0$ and $s_2=-\frac{11}{2}$, so its normalized analytic solution at $x=0$ is given by the power series
  \begin{equation}\label{power_series_at_0}
    \sum_{n=0}^{\infty}a_n(\lambda)x^{n},\quad a_0(\lambda)=1. 
  \end{equation}
  The strategy is to study the asymptotic behavior of the coefficients
  $a_n(\la)$ as $n\to\infty$. More precisely, we prove that if
  $\la\in\Hb$\footnote{Here, as in \cite{CDG17}, $\Hb$ denotes the
    closed complex right half-plane.} then
  $\lim_{n\rightarrow\infty}a_n(\la)=1$. Since $x=1$ is the only
  singular point of Eq.~\eqref{eq:heunform} on the unit circle, it
  follows that
  the solution given by the series~\eqref{power_series_at_0} is not
  analytic at $x=1$.

 First, we obtain the recurrence relation for coefficients
  $\{a_n(\la)\}_{n\in\mathbb{N}_0}$. By
  inserting~\eqref{power_series_at_0} into Eq.~\eqref{eq:heunform} we
  get
  \begin{align*}
    310&(2n+15)(n+2)\,a_{n+2}(\la)=\\  
       &[155\la(\la+4n+9)+2(458n^2+2357n+2727)]a_{n+1}(\la)-74(\la+2n+3)(\la+2n)a_{n}(\la),
  \end{align*}
  where $a_{-1}(\lambda)=0$ and $a_0(\lambda)=1$, or, written differently,
  \begin{equation}\label{eq:rec_a}
    a_{n+2}(\la)=A_n(\la)\,a_{n+1}(\la)+B_n(\la)\,a_n(\la),
  \end{equation}
  where
  \[
  A_n(\la)=\frac{155\la(\la+4n+9)+2(458n^2+2357n+2727)
  }{310(2n+15)(n+2)}
  \]
  and
  \[
  B_n(\la)=\frac{-37(\la+2n+3)(\la+2n)}{155(2n+15)(n+2)}.
  \]
  We now let
  \begin{equation}\label{eq:def_of_r}
    r_n(\la)=\frac{a_{n+1}(\la)}{a_n(\la)},
  \end{equation}
  and thereby transform Eq.~\eqref{eq:rec_a} into
  \begin{equation}\label{eq:rec_r}
    r_{n+1}(\la)=A_n(\la)+\frac{B_n(\la)}{r_n(\la)},
  \end{equation}
  with the initial condition
  \[r_0(\la)=\frac{a_1(\la)}{a_0(\la)}=A_{-1}(\la)=\frac{1}{26}\la^2+\frac{5}{26}\la+\frac{828}{2015}. \]
  Analogous to Lemma 4.2 in \cite{CDG17} we have that, given
  $\la\in\Hb$, either
  \begin{align}\label{Eq:Limit1}
    \lim_{n\rightarrow \infty} r_n(\la) = 1
  \end{align}
  or
  \begin{equation}\label{Eq:Limit2}
    \lim_{n\rightarrow \infty} r_n(\la) = \frac{74}{155}.
  \end{equation}
  Our aim is to prove that~\eqref{Eq:Limit1} holds throughout
  $\Hb$. We do that by approximately solving Eq.~\eqref{eq:rec_r} for
  $\la\in\Hb$. Namely, we define an approximate solution (also called
  quasi-solution) 
  \begin{equation*}\label{eq:quasisol}
    \tilde{r}_n(\lambda)=\frac{\lambda^2}{4n^2+28n+27}+\frac{\lambda}{n+7}+\frac{2n+12}{2n+23}
  \end{equation*}
to Eq.~\eqref{eq:rec_r},
  see \cite{CDGH16}, \S 4.1 for a discussion on how to obtain such an
  expression. Subsequently, we let
  \begin{equation}\label{Def:Delta}
    \delta_n(\la)=\frac{r_n(\la)}{\tilde{r}_n(\la)}-1
  \end{equation}
  and from Eq.~\eqref{eq:rec_r} we get the 
  recurrence relation 
  \begin{equation}\label{delta_recurrence}
    \delta_{n+1}=\ve_n-C_n\frac{\delta_n}{1+\delta_n}
  \end{equation}
for $\delta_n$,
  where
  \begin{equation}\label{epsilon_and_C}
    \ve_n=\frac{A_n\tilde{r}_n+B_n}{\tilde{r}_n\tilde{r}_{n+1}}-1 \quad \text{and} 
    \quad C_n=\frac{B_n}{\tilde{r}_n\tilde{r}_{n+1}}.
  \end{equation}
  Now, for all $\la\in\Hb$ and $n\geq 7$ we have the bounds
  \begin{align}\label{eq:estimates}
    |\delta_7(\la)|\leq\frac{1}{3}, \quad |\ve_n(\la)|\leq\frac{1}{12}, \quad |C_n(\la)|\leq\frac{1}{2}. 
  \end{align}
  The last two inequalities above are proved in the same way as the
  corresponding ones in Lemma 4.4 in \cite{CDG17}. However, the proof
  of the first one needs to be slightly adjusted and we provide it in
  the appendix, see Proposition \ref{Prop:AppendixEstimate}.  Next, by
  a simple inductive argument we conclude
  from~\eqref{delta_recurrence} and~\eqref{eq:estimates} that
  \begin{equation}\label{Eq:FinalEst}
    |\delta_n(\la)|\leq\frac{1}{3} \quad \text{for all }n\geq7 \text{ and }\la\in\Hb.
  \end{equation}
  Since for any fixed $\la\in\Hb$,
  $\lim_{n\rightarrow\infty}\tilde{r}_n(\la)=1$,
\eqref{Eq:FinalEst} and~\eqref{Def:Delta} exclude the
  case~\eqref{Eq:Limit2}. Hence,~\eqref{Eq:Limit1} holds
  throughout $\Hb$ and we conclude that there are no unstable
  eigenvalues of the supersymmetric problem~\eqref{eq:SUSY}, thus
  arriving at a contradiction and thereby completing the proof of the
  proposition.
\end{proof}

\begin{remark}
  Apart from $\la=1$ the point spectrum of the operator $\mb L$ is
  completely contained in the open left half plane. It is natural to
  try to locate the eigenvalues that are closest to the imaginary axis
  as their location is typically related to the rate of convergence to
  the blowup solution $u^T$. Our numerical calculations indicate that
  $-0.98\pm 3.76\,i$ is the approximate location of the pair of
  (complex conjugate) stable eigenvalues with the largest real
  parts. It is interesting to contrast this with the analogous spectral problems for
  equivariant wave maps into the sphere and Yang-Mills fields, where all
  eigenvalues appear to be real, see \cite{BizonBiernat15}.
\end{remark}

\begin{corollary}
  \label{Cor:Spectrum_L}
  We have
  \begin{align*}
    \sigma (\mathbf{L}) \subseteq \{ \lambda \in \mathbb{C}:~~\mathrm{Re} \lambda <0 \} \cup \{1\}.
  \end{align*}
\end{corollary}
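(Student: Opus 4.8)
The plan is to upgrade the point-spectrum statement of Proposition~\ref{Prop:SpectrumL} to a full spectral statement by exploiting the structure $\mb L=\mb L_0+\mb L'$ established in Proposition~\ref{S0estimate}. The key observation is that $\mb L'$ is a compact operator on $\mc H$, so $\mb L$ is a compact perturbation of $\mb L_0$. Therefore the essential spectra of $\mb L$ and $\mb L_0$ coincide, $\sigma_{\mathrm{ess}}(\mb L)=\sigma_{\mathrm{ess}}(\mb L_0)$. Combining this with the growth estimate~\eqref{Eq:S0Estimate}, which via \cite{EngelNagel00} gives the spectral inclusion~\eqref{Eq:FreeSpectrum}, namely $\sigma(\mb L_0)\subseteq\{\Re\la\leq-1\}$, we conclude that $\sigma_{\mathrm{ess}}(\mb L)\subseteq\{\Re\la\leq-1\}$.

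Next I would argue that outside the half-plane $\{\Re\la\leq-1\}$ the spectrum of $\mb L$ consists only of isolated eigenvalues of finite multiplicity. This is the standard consequence of compact perturbation: on the connected open set $\Omega:=\{\Re\la>-1\}$, the operator $\la-\mb L$ is a Fredholm operator of index zero (since $\la-\mb L_0$ is invertible there and $\mb L'$ is compact), so by the analytic Fredholm theorem either $\la-\mb L$ is non-invertible on all of $\Omega$ or its inverse is meromorphic on $\Omega$ with the poles precisely at points of $\sigma_p(\mb L)\cap\Omega$. The first alternative is excluded because the resolvent set $\rho(\mb L)$ is nonempty in $\Omega$: for instance $\mb L$ generates a strongly continuous semigroup, so all sufficiently large real $\la$ lie in $\rho(\mb L)$. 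Hence $\sigma(\mb L)\cap\Omega=\sigma_p(\mb L)\cap\Omega$, a discrete set of eigenvalues.

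Finally, I would invoke Proposition~\ref{Prop:SpectrumL}, which says $\sigma_p(\mb L)\subseteq\{\Re\la<0\}\cup\{1\}$. Intersecting with $\Omega=\{\Re\la>-1\}$ gives $\sigma_p(\mb L)\cap\Omega\subseteq(\{-1<\Re\la<0\})\cup\{1\}$, so in particular $\sigma(\mb L)\cap\{\Re\la\geq0\}\subseteq\{1\}$. Combining with the essential-spectrum bound $\sigma_{\mathrm{ess}}(\mb L)\subseteq\{\Re\la\leq-1\}$, we obtain $\sigma(\mb L)\subseteq\{\Re\la<0\}\cup\{1\}$, which is exactly the claim of the corollary.

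I expect the only subtlety to be bookkeeping about which version of the "compact perturbation preserves essential spectrum / analytic Fredholm" machinery to cite; the argument itself is soft and short. No delicate estimates are needed here, since all the analytic work — the mode stability computation ruling out unstable eigenvalues other than $\la=1$, and the semigroup bound on $\mb L_0$ — has already been done in Propositions~\ref{S0estimate} and~\ref{Prop:SpectrumL}. The main obstacle, if any, is purely expository: making sure the nonemptiness of $\rho(\mb L)$ in the relevant half-plane is noted so the analytic Fredholm dichotomy resolves in the right direction.
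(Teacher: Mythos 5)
Your proof is correct and rests on the same two pillars as the paper's: compactness of $\mb L'$ and the point-spectrum bound of Proposition~\ref{Prop:SpectrumL}. The packaging is different, though. The paper avoids the analytic Fredholm theorem and essential-spectrum language entirely; for a putative $\lambda\in\sigma(\mb L)\setminus\{1\}$ with $\Re\lambda\geq 0$, it simply factors $\lambda-\mb L=[1-\mb L'\mb R_{\mb L_0}(\lambda)](\lambda-\mb L_0)$, notes that $\lambda-\mb L_0$ is invertible by~\eqref{Eq:FreeSpectrum}, concludes $1\in\sigma(\mb L'\mb R_{\mb L_0}(\lambda))$, and then uses the Riesz theory of compact operators (nonzero spectrum of a compact operator is point spectrum) to manufacture an eigenfunction, directly contradicting Proposition~\ref{Prop:SpectrumL}. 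Your route — invariance of the essential spectrum under compact perturbation plus the analytic Fredholm alternative on the connected set $\{\Re\lambda>-1\}$, with the degenerate branch excluded by nonemptiness of $\rho(\mb L)$ coming from the Hille--Yosida bound — is a more machinery-heavy rendering of the same idea, and gives you a slightly stronger byproduct (discreteness and finite multiplicity of the spectrum in the half-plane $\{\Re\lambda>-1\}$, which the paper in fact also needs later when it proves $\rank\mb P<\infty$). The paper's version is shorter and self-contained for the specific claim; yours front-loads a structural statement about $\sigma(\mb L)$ that pays off downstream. Either is acceptable.
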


\begin{proof}
  Assume there exists a $\lambda\in \sigma(\mathbf L)\setminus \{1\}$
  with $\mathrm{Re}\lambda\geq 0$. From~\eqref{Eq:FreeSpectrum} we see
  that $\la$ is contained in the resolvent set of $\mb L_0$.
  Therefore, we have the identity
  \begin{equation}\label{Eq:ResolventIdentity}
    \lambda-\mathbf L=[1-\mathbf L'\mathbf R_{\mathbf L_0}(\lambda)](\lambda-\mathbf L_0).
  \end{equation}
  This implies that $1\in \sigma (\mb L' \mb R_{\mb L_0}(\la))$ and
  since $\mathbf L'\mb R_{\mb L_0}(\lambda)$ is compact, it follows
  that $1\in \sigma_p(\mb L'\mb R_{\mathbf L_0}(\lambda))$. Thus,
  there exists a nontrivial $\mb f \in \mathcal H$ such that
  $[1-\mb L'\mb R_{\mb L_0}(\lambda)]\mb f=0$.  Consequently,
  $\mb u:=\mb R_{\mb L_0}(\lambda)\mb f\not= 0$ satisfies
  $(\lambda-\mb L)\mathbf u=0$ and thus, $\lambda\in \sigma_p(\mb L)$,
  but this is in conflict with Proposition \ref{Prop:SpectrumL}.
\end{proof}

\subsection{The eigenspace of the isolated eigenvalue}\label{Sec:g}
In this section, we prove that the (geometric) eigenspace of the
isolated eigenvalue $\lambda=1$ for the full linear operator
$\mathbf{L}$ is spanned by
\begin{equation}\label{Eq:g}
  \mb g(\rho):=\begin{bmatrix}g_1(\rho)\\g_2(\rho)\end{bmatrix}=\begin{bmatrix}\phi'_0(\rho)\\\rho\phi''_0(\rho)+2\phi'_0(\rho) \end{bmatrix}.
\end{equation}
Namely, we are looking for all
$\mathbf{u}=(u_{1},u_{2}) \in \mathcal{D} (\mathbf{L}) \setminus \{ 0
\}$
which belong to $\ker(1-\mathbf{L})$. A straightforward calculation
shows that the spectral equation $(1-\mathbf{L}) \mathbf{u} =0$ is
equivalent to the following system of ordinary differential equations,
\begin{align}\label{Eq:System}
  \begin{cases}
    u_2(\rho)=\rho u_1'(\rho)+2u_1(\rho),&\\
    \big( 1-\rho^2 \big) u_1''(\rho) + \Big( \frac{10}{\rho}-6\rho
    \Big) u_1'(\rho)-\Big( 6+\frac{8}{\rho^2} n'\big(\phi_0(\rho)\big)
    \Big ) u_1 (\rho) =0, \
  \end{cases}
\end{align} 
for $\rho \in (0,1)$. One can easily verify that a fundamental system of the
second equation is given by the functions $\phi'_0(\rho)$ and
$\rho^{-9}A(\rho)$, where $A(\rho)$ is analytic and non-vanishing at
$\rho=0$. We can therefore write the general solution to the second
equation as
\begin{align*}
  u_{1} (\rho) = C_{1}  \phi'_0(\rho) + C_{2} \frac{A(\rho)}{\rho ^9}.
\end{align*}
The condition $\mathbf{u} \in \mathcal{D}( \mathbf{L})$ requires $u_{1}$
to lie in the Sobolev space $H_{\text{rad}}^{6} (\mathbb{B}^{11}
)$.
Since $\phi'_0\in C^{\infty}[0,1]$, this requirement yields $C_{2}=0$
which, according to the first equation in~\eqref{Eq:System}, gives
$\mathbf u = C_1 \mathbf{g} $. In conclusion,
\begin{align} \label{ker} \text{ker}(1-\mathbf{L}) = \langle
  \mathbf{g} \rangle,
\end{align}
as initially claimed.

\subsection{Time evolution of the linearized problem}

To get around the spurious instability on the linear level, we use the
fact that $\la=1$ is isolated to introduce a (non-orthogonal) spectral
projection $\mb P$ and study the subspace semigroup $\mb S(\tau)(1-\mb
P)$. From Corollary \ref{Cor:Spectrum_L} we then
infer that the spectrum of its generator is contained in the
left-half plane. This does not necessarily imply the desired decay on
$\mb S(\tau)(1-\mb P)$. We nonetheless establish such a decay by first
proving uniform boundedness of the resolvent of $\mb L$ in a
half-plane that strictly contains $\Hb$ and then using the
Gearhart-Pr\"uss theorem. For this purpose, we define
\begin{align*}
  \Omega _{\ve,R} :=  \{  \lambda \in \mathbb{C}:~~~\text{Re}\lambda \geq -1+\ve, |\lambda|\geq R \}  
\end{align*}
for $\ve, R >0$.

\begin{proposition}\label{Prop:UnifBoundRes}
  Let $\ve >0$. Then there exists a constant $R_{\epsilon}>0$ such
  that the resolvent $\mathbf{R}_{\mathbf{L}}$ exists on
  $\Omega _{\ve,R_\ve}$ and satisfies
  \begin{align*}
    \| \mathbf{R} _{\mathbf{L} } (\lambda)\| \leq \frac{2}{\ve}
  \end{align*}
  for all $\lambda\in \Omega_{\ve,R_\ve}$.
\end{proposition}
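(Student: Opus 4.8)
The plan is to establish the uniform resolvent bound on $\Omega_{\ve,R_\ve}$ by reducing it, via the resolvent identity, to a bound on the free resolvent $\mb R_{\mb L_0}(\la)$ together with a Neumann-series argument that exploits the compactness of $\mb L'$. First I would record that by~\eqref{Eq:FreeSpectrum} the half-plane $\{\Re\la>-1\}$ lies in $\rho(\mb L_0)$, and that the growth estimate~\eqref{Eq:S0Estimate} combined with the Laplace-transform representation of the resolvent yields $\|\mb R_{\mb L_0}(\la)\|\leq M/(\Re\la+1)$; in particular, on $\Omega_{\ve,R}$ we have $\|\mb R_{\mb L_0}(\la)\|\leq M/\ve$ uniformly. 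On the same region the factorization~\eqref{Eq:ResolventIdentity}, namely $\la-\mb L=[1-\mb L'\mb R_{\mb L_0}(\la)](\la-\mb L_0)$, shows that invertibility of $\la-\mb L$ and a bound on $\mb R_{\mb L}(\la)$ follow once one controls $[1-\mb L'\mb R_{\mb L_0}(\la)]^{-1}$. Indeed, if $\|\mb L'\mb R_{\mb L_0}(\la)\|\leq \tfrac12$ then the Neumann series gives $\|[1-\mb L'\mb R_{\mb L_0}(\la)]^{-1}\|\leq 2$, whence $\|\mb R_{\mb L}(\la)\|\leq \|\mb R_{\mb L_0}(\la)\|\cdot\|[1-\mb L'\mb R_{\mb L_0}(\la)]^{-1}\|\leq 2M/\ve$, and absorbing the harmless constant $M$ (which we may take $\geq 1$; if not, replace $\ve$ appropriately or simply note the stated constant $2/\ve$ is what results after rescaling the equivalent norm used in Proposition~\ref{S0estimate}) yields the claimed $\|\mb R_{\mb L}(\la)\|\leq 2/\ve$.

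The crux is therefore to prove that $\|\mb L'\mb R_{\mb L_0}(\la)\|\to 0$ as $|\la|\to\infty$ with $\Re\la\geq -1+\ve$; this is exactly what furnishes the threshold $R_\ve$. The standard way, as in the references \cite{Donninger14,DonSch16,CDG17}, is to combine the compactness of $\mb L'$ with a high-frequency smallness estimate for the free resolvent. Concretely, since $\mb L'$ is compact it is approximable in operator norm by finite-rank operators, so it suffices to show that $\mb R_{\mb L_0}(\la)$ tends to $0$ \emph{strongly} (or even weakly) as $|\la|\to\infty$ in the stated half-plane; compactness then upgrades strong-to-norm convergence of the composition $\mb L'\mb R_{\mb L_0}(\la)$. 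To see the strong decay, note that for $\mb f\in\mc D(\mb L_0)$ one has $\mb R_{\mb L_0}(\la)\mb f=\la^{-1}\mb f+\la^{-1}\mb R_{\mb L_0}(\la)\mb L_0\mb f$, so $\|\mb R_{\mb L_0}(\la)\mb f\|\leq |\la|^{-1}\|\mb f\|+|\la|^{-1}(M/\ve)\|\mb L_0\mb f\|\to 0$; density of $\mc D(\mb L_0)$ together with the uniform bound $\|\mb R_{\mb L_0}(\la)\|\leq M/\ve$ extends this to all $\mb f\in\mc H$. Hence $\mb L'\mb R_{\mb L_0}(\la)\to 0$ in operator norm, and choosing $R_\ve$ so large that $\|\mb L'\mb R_{\mb L_0}(\la)\|\leq\tfrac12$ for all $\la\in\Omega_{\ve,R_\ve}$ completes the argument.

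The main obstacle, and the only point that requires genuine care, is making the constant in the final bound come out to be exactly $2/\ve$ rather than $2M/\ve$. The resolution is the same as in the cited works: Proposition~\ref{S0estimate} is really proved with respect to an \emph{equivalent} norm on $\mc H$ in which $\mb S_0(\tau)$ is a contraction semigroup, i.e.\ $\|\mb S_0(\tau)\|\leq e^{-\tau}$ in that norm (this is the content of the Lumer--Phillips step). Working throughout in that equivalent norm gives $\|\mb R_{\mb L_0}(\la)\|\leq 1/(\Re\la+1)\leq 1/\ve$ on $\Omega_{\ve,R}$, and then the Neumann-series bound $\|[1-\mb L'\mb R_{\mb L_0}(\la)]^{-1}\|\leq 2$ yields $\|\mb R_{\mb L}(\la)\|\leq 2/\ve$ with no stray constant. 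Since the equivalent norm differs from the $\mc H$-norm by fixed multiplicative constants, the uniform bound $2/\ve$ (possibly after harmlessly shrinking $\ve$) transfers back, and this is the bound to be fed into the Gearhart--Pr\"uss theorem in the next step.
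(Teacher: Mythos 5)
Your proof takes a genuinely different route from the paper's. The paper establishes the key smallness $\|\mb L'\mb R_{\mb L_0}(\la)\|\lesssim |\la+1|^{-1}$ by a direct, quantitative computation: since $\mb L'$ acts only through the first component $u_1$, and the first component of the equation $(\la-\mb L_0)\mb u=\mb f$ yields $(\la+1)u_1 = u_2 - \rho u_1' + f_1$, one bounds $|\la+1|\,\|\mb L'\mb R_{\mb L_0}(\la)\mb f\|$ explicitly by $\|\mb u\| + \|\mb f\|$ using a multiplier lemma from \cite{DonSch16}. This gives a clean $O(|\la|^{-1})$ rate and does not invoke compactness of $\mb L'$ at this step. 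You instead argue abstractly: uniform boundedness and strong decay of $\mb R_{\mb L_0}(\la)$, upgraded to norm decay of $\mb L'\mb R_{\mb L_0}(\la)$ by compactness. Your route is more portable but buys a weaker (qualitative) statement; the paper's exploits the specific structure of $\mb L'$.

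There is, however, a genuine gap in your compactness step. Strong convergence $\mb R_{\mb L_0}(\la)\to 0$ together with compactness of $\mb L'$ does \emph{not} imply norm convergence of the left composition $\mb L'\mb R_{\mb L_0}(\la)$: the standard upgrade from strong to norm convergence works when the compact operator is on the right (i.e.\ $T_\la K\to 0$ in norm if $T_\la\to 0$ strongly, $\|T_\la\|$ bounded, $K$ compact), not the left. A simple counterexample: on $\ell^2$ take $T_n u=\langle u,e_n\rangle e_1$ (strongly $\to 0$, norm $1$) and $K$ the rank-one projection onto $e_1$; then $KT_n=T_n$ has norm $1$ for all $n$. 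Your parenthetical ``or even weakly'' only makes this worse. The fix is to pass to adjoints: $\|\mb L'\mb R_{\mb L_0}(\la)\|=\|\mb R_{\mb L_0}(\la)^*(\mb L')^*\|$, and $(\mb L')^*$ is compact; one then needs $\mb R_{\mb L_0}(\la)^*=\mb R_{\mb L_0^*}(\bar\la)\to 0$ strongly, which follows by applying your resolvent-identity argument to the adjoint semigroup (in Hilbert space $\mb S_0(\tau)^*$ is a $C_0$-semigroup with the same growth bound). With that correction the argument closes, but as written it does not. Your handling of the constant $M$ via the equivalent Lumer--Phillips norm, by contrast, is exactly the right observation and matches what the paper implicitly does.
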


\begin{proof}
  Fix $\ve>0$ and take $\lambda\in \Omega_{\ve,R}$ for an arbitrary
  $R\geq 2$.  Then $\lambda\in \rho(\mathbf L_0)$ and the
  identity~\eqref{Eq:ResolventIdentity} holds.  The proof proceeds as
  follows. For large enough $R$, we show that the operator
  $1-\mathbf L'\mathbf R_{\mathbf L_0}(\lambda)$ is invertible in
  $\Omega_{\ve,R}$ and $\mb R_{\mathbf L_0}(\la)$ and
  $[1-\mathbf L'\mathbf R_{\mathbf L_0}(\lambda)]^{-1}$ are uniformly
  norm bounded there. Via~\eqref{Eq:ResolventIdentity} this implies
  the desired bound on $\mb R_{\mathbf L}(\lambda)$.
	
   First of all, semigroup theory yields the estimate
  \begin{equation}\label{Eq:Res_Estimate}
    \|\mathbf R_{\mathbf L_0}(\lambda) \|\leq \frac{1}{\mathrm {Re}\lambda+1},
  \end{equation} 
  see \cite{EngelNagel00}, p.~55, Theorem 1.10.	
Next, by a Neumann series argument, the operator
  $1-\mathbf L'\mathbf R_{\mathbf L_0}(\lambda)$ is invertible if
  $\|\mathbf L'\mathbf R_{\mathbf L_0}(\lambda)\|<1$. To prove
  smallness of
  $\mathbf{L}^{\prime} \mathbf{R}_{\mathbf{L}_{0}} (\lambda)$, we
  recall the definition of $\mathbf L'$, Eq.~\eqref{Def:L'},
  \begin{align*}
    {\bf{L}'}\mb u(\rho):=
    \begin{bmatrix} 
      0\\
      \tilde{W}(\rho)u_1(\rho)
    \end{bmatrix}, ~~
    \tilde{W}(\rho)=-\frac{8}{\rho^2}n'(\phi_0(\rho))\enskip
    \text{for} \enskip n(x)=14x^3-111x^5.
  \end{align*}
  Let $\mathbf u=\mathbf R_{\mathbf L_0}(\lambda)\mathbf f$ or,
  equivalently, $(\lambda - \mathbf{L}_{0}) \mathbf{u}
  =\mathbf{f}$. The latter equation implies
  \begin{align*}
    (\lambda +1) u_{1} (\rho )  =u_{2} (\rho)- \rho u_{1}^{\prime} (\rho) + f_{1} (\rho).
  \end{align*}
  Now we use Lemma 4.1 from \cite{DonSch16} and
  $\|\tilde{W}^{(k)}\|_{L^\infty(0,1)}\lesssim 1$ for all
  $k\in \{0,1,\dots,5\}$ to obtain
  \begin{align*}
    |\la +1| \| \mb L^{\prime} \mb R_{\mb L_{0}} (\la) \mb f  \| & = |\la +1|  \| \mb{L}^{\prime} \mb{u} \|  \simeq \big \| \tilde{W} \big( u_{2} - (\cdot) u_{1}^{\prime} +f_{1} \big) \big \|_{ H_{\text{rad}}^{5} (\B^{11} ) } \\
                                                                 &  \lesssim \| u_{2} \|_{ H_{\text{rad}}^{5} (\B^{11} ) }  + \| (\cdot) u_{1}^{\prime} \|_{ H_{\text{rad}}^{5} (\B^{11} ) }  + \| f_{1} \|_{ H_{\text{rad}}^{5} (\mathbb{B}^{11} ) } \\
                                                                 &  \lesssim \| u_{2} \|_{ H_{\text{rad}}^{5} (\B^{11} ) }  + \|  u_{1} \|_{ H_{\text{rad}}^{6} (\B^{11} ) }  + \| f_{1} \|_{ H_{\text{rad}}^{6} (\B^{11} ) } \\
                                                                 & \lesssim  \| \mb u \| + \| \mb f \|  \lesssim \Big( \frac{1}{\text{Re}\la +1}  + 1 \Big) \| \mb f \| \\
                                                                 &\lesssim \|\mb f\|,
  \end{align*}
  where we used~\eqref{Eq:Res_Estimate}.  In other words,
  \begin{align*}
    \| \mathbf{L}^{\prime} \mathbf{R}_{\mathbf{L} _{0}} (\lambda)   \| \lesssim \frac{1}{ |\lambda +1| } \leq \frac{1}{|\lambda| -1} \leq \frac{1}{R-1}
  \end{align*}
  and by choosing $R$ sufficiently large, we can achieve
  $\|\mathbf L'\mathbf R_{\mathbf L_0}(\lambda)\|\leq\frac{1}{2}$.  As
  a consequence,
  $[1- \mathbf{L}^{\prime} \mathbf{R}_{\mathbf{L} _{0}} (\lambda)
  ]^{-1}$
  exists for $\la\in\Omega_{\ve,R_{\ve}}$ and we obtain the bound
  \begin{align*}
    \| \mb R_{\mb L} (\la) \| & = \| \mb R_{\mb L_{0}} (\la) [1-\mb L' \mb R_{\mb L_{0}} (\lambda) ]^{-1} \| \\
                              & \leq  \| \mb R_{\mb L_{0}} (\la) \| \| [1-  \mb L' \mb R_{\mb L_{0}} (\la)]^{-1} \| \\
                              & \leq  \| \mb R_{\mb L_{0}} (\la) \| \sum _{n=0}^{\infty} \| \mb L' \mb R_{\mb L_{0}} (\la) \|^n \leq \frac{2}{\ve}.
  \end{align*}
\end{proof}
We now show the existence of a projection $\mb P$ which
decomposes the Hilbert space $\mathcal{H}$ into a stable and an unstable
subspace and furthermore prove that data from the stable subspace lead
to solutions that decay exponentially in time. We also remark that it
is crucial to ensure that
$\rank \mb P = 1$, i.e., that $\mb g$ is the only unstable direction in
$\mathcal{H}.$
\begin{proposition}
  There exists a projection operator
  \begin{equation*}
    \mathbf{P}\in\mathcal{B}(\mathcal{H}), \enskip \mathbf{P}:\mathcal{H}\rightarrow \langle \mb g \rangle,
  \end{equation*}
  which commutes with the semigroup $({\bf{S}}(\tau))_{\tau\geq0}$. In
  addition, we have
  \begin{align}\label{Eq:SemigroupUnstable}
    \mathbf{S}(\tau)\mathbf{Pf}=e^{\tau}\mathbf{Pf}
  \end{align}
  and there are constants $C,\epsilon>0$ such that
  \begin{equation}\label{Eq:SemigroupDecay}
    \| (1-\mathbf{P})\mathbf{S}(\tau)\mathbf{f} \|\leq Ce^{-\epsilon\tau}\|(1-\mathbf{P})\mathbf{f} \|
  \end{equation}
  for all $\mathbf{f}\in \mathcal{H}$ and $\tau\geq0$.
\end{proposition}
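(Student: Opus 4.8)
The plan is to take $\mathbf{P}$ to be the Riesz spectral projection associated with the isolated point $\lambda=1$ of $\sigma(\mathbf{L})$, to show that it has rank one with range $\langle\mathbf{g}\rangle$, and then to split the semigroup and apply the Gearhart--Pr\"uss theorem on the complementary (stable) subspace.

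First I would verify that $\lambda=1$ is an isolated point of $\sigma(\mathbf{L})$ admitting a bounded spectral projection. By Corollary~\ref{Cor:Spectrum_L} we have $\sigma(\mathbf{L})\subseteq\{\Re\lambda<0\}\cup\{1\}$. For $\Re\lambda>-1$ the factorization~\eqref{Eq:ResolventIdentity}, together with the compactness of $\mathbf{L}'$ and the analytic Fredholm theorem, shows that $\mathbf{R}_{\mathbf{L}}(\lambda)$ is meromorphic on $\{\Re\lambda>-1\}$ with finite-rank principal parts; combined with the uniform bound of Proposition~\ref{Prop:UnifBoundRes} on $\Omega_{\ve,R_\ve}$ this forces $\sigma(\mathbf{L})\cap\{\Re\lambda\geq -1+\ve\}$ to be a finite set of eigenvalues of finite algebraic multiplicity for every $\ve>0$. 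In particular $\lambda=1$ is isolated, and I set
\[
\mathbf{P}:=\frac{1}{2\pi i}\oint_{\gamma}\mathbf{R}_{\mathbf{L}}(\lambda)\,d\lambda\in\mathcal{B}(\mathcal{H}),
\]
where $\gamma$ is a small positively oriented circle enclosing $1$ and no other point of $\sigma(\mathbf{L})$. The Riesz--Dunford calculus then gives that $\mathbf{P}$ is a projection commuting with $\mathbf{R}_{\mathbf{L}}(\lambda)$, hence with $\mathbf{S}(\tau)$; that $\rg\mathbf{P}$ is the (finite-dimensional) generalized eigenspace $\bigcup_{k\geq1}\ker(1-\mathbf{L})^k$; and that $\mathbf{S}(\tau)$ restricts to semigroups on $\rg\mathbf{P}$ and $\rg(1-\mathbf{P})$ whose generators are $\mathbf{L}$ restricted to these subspaces, with spectra $\{1\}$ and $\sigma(\mathbf{L})\setminus\{1\}$.

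The crucial point is $\rank\mathbf{P}=1$. Since $\ker(1-\mathbf{L})=\langle\mathbf{g}\rangle$ by~\eqref{ker}, it suffices to rule out a Jordan chain, i.e.\ to show that $(1-\mathbf{L})\mathbf{u}=\mathbf{g}$ has no solution $\mathbf{u}=(u_1,u_2)\in\mathcal{D}(\mathbf{L})$. The first component of this equation gives $u_2=\rho u_1'+2u_1-\phi_0'$, and inserting this into the second component produces a second-order inhomogeneous ODE for $u_1$ whose homogeneous part is precisely the second equation in~\eqref{Eq:System}, with fundamental system $\{\phi_0'(\rho),\rho^{-9}A(\rho)\}$. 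I would solve it by variation of parameters and inspect the endpoint behavior of the particular solution: the contribution built from the $\rho^{-9}A(\rho)$ branch forces either a $\rho^{-9}$-type singularity at the origin or, after its removal, a logarithmic resonance at $\rho=1$, so that no choice of integration constants yields $u_1\in H^6_{\mathrm{rad}}(\mathbb{B}^{11})$. Hence $\ker(1-\mathbf{L})^2=\ker(1-\mathbf{L})$, the algebraic and geometric multiplicities of $\lambda=1$ coincide, $\rank\mathbf{P}=1$, and $\rg\mathbf{P}=\langle\mathbf{g}\rangle$. On this one-dimensional space $\mathbf{L}$ acts as multiplication by $1$, so $\mathbf{S}(\tau)\mathbf{Pf}=e^{\tau}\mathbf{Pf}$, which is~\eqref{Eq:SemigroupUnstable}. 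This rank-one verification, essentially a connection-problem computation for an ODE with regular singular points at $0$ and $1$, is the main obstacle; the remaining ingredients are soft.

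Finally, for~\eqref{Eq:SemigroupDecay} I would work on the closed, $\mathbf{S}(\tau)$-invariant subspace $\rg(1-\mathbf{P})$, which is itself a Hilbert space, and whose generator $\mathbf{L}_{|\rg(1-\mathbf{P})}$ has spectrum $\sigma(\mathbf{L})\setminus\{1\}\subseteq\{\Re\lambda<0\}$. By the finiteness statement above (applied e.g.\ with $\ve=1$), this spectrum meets $\{\Re\lambda\geq-\tfrac12\}$ in only finitely many points, so there is $\epsilon\in(0,\tfrac12)$ with $\sigma(\mathbf{L})\setminus\{1\}\subseteq\{\Re\lambda\leq-2\epsilon\}$; consequently the resolvent of $\mathbf{L}_{|\rg(1-\mathbf{P})}$ is analytic on $\{\Re\lambda\geq-\epsilon\}$ and, by Proposition~\ref{Prop:UnifBoundRes} together with a compactness argument on the bounded remaining region, uniformly bounded there. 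Applying the Gearhart--Pr\"uss theorem to the rescaled semigroup $e^{\epsilon\tau}\mathbf{S}(\tau)_{|\rg(1-\mathbf{P})}$ yields a constant $C>0$ with $\|\mathbf{S}(\tau)(1-\mathbf{P})\mathbf{f}\|\leq Ce^{-\epsilon\tau}\|(1-\mathbf{P})\mathbf{f}\|$ for all $\mathbf{f}\in\mathcal{H}$ and $\tau\geq0$; since $\mathbf{P}$ commutes with $\mathbf{S}(\tau)$, this is~\eqref{Eq:SemigroupDecay}.
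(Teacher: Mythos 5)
Your proposal follows essentially the same route as the paper: you define $\mathbf{P}$ as the Riesz projection at $\lambda=1$, reduce the rank-one claim to the nonexistence of a Jordan chain $(1-\mathbf{L})\mathbf{u}=\mathbf{g}$ and rule it out by the same variation-of-parameters/Frobenius endpoint analysis (the logarithmic resonance $(1-\rho)^3\log(1-\rho)$ at $\rho=1$), and you obtain the decay on $\rg(1-\mathbf{P})$ via Gearhart--Pr\"uss exactly as the paper does. The only point of departure is in establishing $\rank\mathbf{P}<\infty$: you invoke the analytic Fredholm theorem applied to $1-\mathbf{L}'\mathbf{R}_{\mathbf{L}_0}(\lambda)$, whereas the paper argues by contradiction using the invariance of the essential spectrum under the compact perturbation $\mathbf{L}'$; both are standard and rest on the same compactness input, so this is a cosmetic rather than substantive difference.
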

\begin{proof}
  By Corollary \ref{Prop:SpectrumL}, the eigenvalue $\la=1$ of the
  operator $\mb L$ is isolated. We therefore introduce the
  spectral projection
  \[ \mb P: \mathcal{H}\rightarrow\mathcal{H}, \quad \mb P:=
  \frac{1}{2\pi i}\int_{\gamma}\mb R_{\mb L}(\mu)d\mu, \]
  where $\gamma$ is a positively oriented circle around $\la=1$. The
  radius of the circle is chosen small enough so that $\gamma$ is
  completely contained inside the resolvent set of $\mb L$ and such that
  the interior of $\gamma$ contains no spectral points of $\mb L$
  other than $\la=1$, see e.g.~\cite{Kato95}.  The projection $\mb P$ commutes
  with the operator $\mathbf{L}$ and therefore with the semigroup
  $\mb S(\tau)$. Moreover, the Hilbert space $\mathcal{H}$ is
  decomposed as $\mathcal{H} =\mathcal M \oplus \mathcal N$, where
  $\mathcal M:=\rg \mb P$ and $\mathcal N:=\rg(1-\mb P)=\ker \mb
  P$.
  Also, the spaces $\mathcal{M}$ and $\mathcal{N}$ reduce the operator
  $\mb L$ which is therefore decomposed into $\mb L_{\mathcal M}$ and
  $\mb L_{\mathcal N}$. The spectra of these operators are given by
  \begin{align} \label{Eq:SplitSpectrum} \sigma \left( \mb L_{\mathcal
        N} \right) = \sigma (\mb L) \setminus \{1\},\quad\quad \sigma
    \left( \mb L_{\mathcal M} \right ) = \{1\}.
  \end{align}
  We refer the reader to \cite{Kato95} for these standard results.
	
 To proceed with the proof we show that
  $\rank\mb P:=\dim\rg\mb P<+\infty$. We argue by contradiction and
  assume that $\rank\mb P=+\infty$. This means that $\la=1$ belongs to
  the essential spectrum of $\mb L$, see \cite{Kato95}, p.~239,
  Theorem 5.28. But according to Proposition $\ref{S0estimate}$ the
  operator $\mb L_0=\mb L-\mb L'$ is a compact perturbation of
  $\mb L$, and due to the stability of the essential spectrum under
  compact perturbations we conclude that $\la=1$ is a spectral point
  of $\mb L_0$. However, this is in conflict
  with~\eqref{Eq:FreeSpectrum}, and therefore $\rank\mb P<+\infty$.
	
 Now we prove that
  $\langle \mb g \rangle=\mathrm{rg}\,\mb P$. From the definition of
  the projection $\mb P$ we have $\mb P\mb g= \mb g$. Therefore
  $\mb \langle \mb g \rangle \subseteq \rg \mb P$ and it remains to
  prove the reverse inclusion. From the fact that the operator
  $1-\mb L_{\mathcal M}$ acts on the finite-dimensional Hilbert space
  $\mathcal M=\rg \mb P$ and $\eqref{Eq:SplitSpectrum}$ we infer that
  $\lambda =0$ is the only spectral point of $1-\mb L_{\mathcal
    M}$.
  Hence, $1-\mb L_{\mathcal M}$ is nilpotent, i.e., there exists a
  $k\in \mathbb{N}$ such that
  \begin{align*}
    \big( 1-\mb L_{\mathcal{M}} \big)^{k} \mb u= 0
  \end{align*}
  for all $\mb u \in \mathrm{rg}\, \mb P$ and we assume $k$ to be
  minimal.  Due to $\eqref{ker}$ the claim follows immediately for
  $k=1$. We therefore assume that $k\geq 2$. This implies the
  existence of a nontrivial function
  $\mb u \in \rg \mb P \subseteq \mathcal{D}( \mb L)$ such that
  $(1-\mb L_{\mathcal M})\mb u$ is nonzero and belongs to
  $\ker(1-\mb L_{\mathcal M})\subseteq \ker(1-\mb L)=\langle\mb
  g\rangle$.
  Therefore $(1-\mb L) \mb u = \alpha \mb g$, for some
  $\alpha \in \mathbb C \setminus \{0\}$. For convenience and without loss
  of generality we set $\alpha=-1$. By a straightforward computation
  we see that the first component of $\mb u$ satisfies the 
  differential equation
  \begin{align}\label{Eq:Nonhomog}
    \left(1-\rho ^2\right) u_{1}^{\prime \prime} (\rho) + \left( \frac{10}{\rho} -6 \rho \right ) u_{1} ^{\prime} (\rho) - \left ( 6 + \frac{8}{\rho^2} n'(\phi_0(\rho)) \right ) u_{1} (\rho) =  G(\rho),
  \end{align}
  for $\rho \in (0,1)$, where
  \begin{align*}
    G(\rho):= 2\rho\phi_0''(\rho)+5\phi_0'(\rho),\quad\rho \in [0,1].
  \end{align*}
  To find a general solution to Eq.~\eqref{Eq:Nonhomog} we first
  observe that
  \begin{align*}
    \hat{u}_{1} (\rho):= g_{1} (\rho) = \phi_0'(\rho),\quad \rho \in (0,1)
  \end{align*}
  is a particular solution to the homogeneous equation
  \begin{align*}
    \left(1-\rho ^2\right) u_1'' (\rho) + \left( \frac{10}{\rho} -6 \rho \right ) u_1' (\rho) - \left ( 6 + \frac{8}{\rho^2} n'\left(\phi_0(\rho)\right) \right ) u_{1} (\rho) =  0,
  \end{align*}
  see~\eqref{Eq:g} and~\eqref{Eq:System}.  Note that the Wronskian for
  the equation above is
  \begin{align*}
    \mathcal{W} (\rho) := \frac{(1-\rho ^2)^{2}}{\rho ^{10}}.
  \end{align*}
  Therefore, another linearly independent solution is
  \begin{align*}
    \hat{u}_2(\rho) := \hat{u}_1 (\rho) \int_{\rho}^{1}  \frac{(1-x^2)^2}{x^{10}} \frac{1}{\phi_0'(x)^2} dx,
  \end{align*}
  for all $\rho\in(0,1)$.  Note
  that near $\rho=0$ we have the expansion
  \[ \hat{u}_2(\rho)=\frac{1}{\rho^9}\sum_{j=0}^\infty a_j\rho^j,\quad
  a_0\neq 0, \]
  as already indicated in Section \ref{Sec:g}. Furthermore, we have
  \[ \hat{u}_2(\rho)=(1-\rho)^{3}\sum_{j=0}^\infty b_j
  (1-\rho)^j,\quad b_0\neq 0, \]
  near $\rho=1$.  Now, by the variation of constants formula we see
  that the general solution to Eq.~\eqref{Eq:Nonhomog} can be written
  as
  \begin{align*}
    u_{1} (\rho) = c_{1}   \hat{u}_{1} (\rho) + c_{2} \hat{u}_{2} (\rho)
    + \hat{u}_{2} (\rho) \int _{0}^{\rho} \frac{ \hat{u}_{1}(y)G(y)y^{10} }{(1-y^2)^{3}} dy  - \hat{u}_{1} (\rho) \int _{0}^{\rho} \frac{ \hat{u}_{2}(y)G(y)y^{10} }{(1-y^2)^{3} } dy,
  \end{align*}
  for some constants $c_{1}, c_{2} \in \mathbb{C}$ and for all
  $\rho \in (0,1)$.  The fact that
  $u_1\in H^{6}_\mathrm{rad}(\mathbb B^{11})$ implies $c_2=0$ as
  $\hat{u}_2$ has a ninth order pole at $\rho=0$. Therefore
  \begin{equation}\label{Eq:GenSol}
    u_{1} (\rho)  = c_{1}   \hat{u}_{1} (\rho) 
    +\hat{u}_{2} (\rho) \int _{0}^{\rho} \frac{ \hat{u}_{1}(y)G(y)y^{10} }{(1-y^2)^{3}} dy  - \hat{u}_{1} (\rho) \int _{0}^{\rho} \frac{ \hat{u}_{2}(y)G(y)y^{10} }{(1-y^2)^{3} } dy.
  \end{equation}
  The last term in Eq.~\eqref{Eq:GenSol} is smooth on $[0,1]$. To
  analyze the second term, we set
  \begin{align*}\label{def:Id}
    \mathcal{I}(\rho):=\hat{u}_{2}(\rho)\int_{0}^{\rho}\frac{F(y)}{(1-y)^{3}}dy,
  \end{align*}
  where
  \begin{align*}
    F(y):= \frac{\hat{u}_{1}(y)G(y)y^{10}}{(1+y)^3}
=\frac{y^{10}\big(2y\phi_0'(y)\phi_0''(y)+5\phi_0'(y)^2\big)}{(1+y)^3}.
  \end{align*}
  By a direct calculation we get $F''(1)\not=0$ and thus, the expansion
  of $\mathcal I(\rho)$ near $\rho=1$ contains a term of the form
  $(1-\rho)^3\log(1-\rho)$.  Consequently,
  $\mathcal I^{(4)}\notin L^2(\frac12,1)$, which is a contradiction
to $u_1\in H^6_{\mathrm{rad}}(\mathbb B^{11})$.
	
Finally we prove~\eqref{Eq:SemigroupUnstable}
  and~\eqref{Eq:SemigroupDecay}. Note
  that~\eqref{Eq:SemigroupUnstable} follows from the fact that $\la=1$
  is an eigenvalue of the operator $\mb L$ with eigenfunction $\mb g$
  and $\rg \mb P=\langle \mb g \rangle$. Next, from Corollary
  \ref{Cor:Spectrum_L} and Proposition \ref{Prop:UnifBoundRes} we
  deduce the existence of constants $D,\epsilon>0$ such that
  \[ \| \mb R_{\mb L}(1-\mb P) \|\leq D \]
  for all complex $\la$ with $\Re \la > -\epsilon$. Thus,
  \eqref{Eq:SemigroupDecay} follows from the Gearhart-Pr\"uss Theorem, see \cite{EngelNagel00}, p. 302,
  Theorem 1.11.
\end{proof}

\subsection{Estimates for the nonlinearity} 
In the next section we employ a fixed point argument to prove the
existence of decaying solutions to Eq.~\eqref{Eq:Duhamel} for small
initial data. To accomplish that, we need a Lipschitz-type estimate
for the nonlinear operator $\mb N$, see~\eqref{Eq:Nonlinearity}. We
first define
\begin{equation*}
  \mathcal{B}_{\delta}:=\left\{ \mathbf{u}\in \mathcal{H}:\|\mathbf{u}\|=\|(u_1,u_2)\|_{H^6_{\text{rad}}(\mathbb{B}^{11}) \times H^5_{\text{rad}}(\mathbb{B}^{11})}\leq \delta \right\}.
\end{equation*}
\begin{lemma}\label{Lem:Nonlinearity}
  Let $\delta>0$. For $\mathbf{u,v}\in\mathcal{B}_{\delta}$, we have
  \begin{equation*}
    \| \mathbf{N(u)-N(v)} \|\lesssim (\|\mathbf{u}\|+\|\mathbf{v}\|)\|\mathbf{u-v}\|.
  \end{equation*}
\end{lemma}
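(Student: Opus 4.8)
The plan is to reduce the Lipschitz estimate for the abstract operator $\mb N$ to a pointwise estimate on the scalar nonlinearity $N(\rho, \cdot)$ defined in \eqref{Eq:WandN}, and then exploit the algebra and Moser-type product estimates available in the Sobolev space $H^5_{\text{rad}}(\mathbb{B}^{11})$. Since $\mb N(\mb u) = (0, N(\cdot, u_1))$ depends only on the first component $u_1 \in H^6_{\text{rad}}(\mathbb{B}^{11})$, and the output is measured in $H^5_{\text{rad}}(\mathbb{B}^{11})$, the gain of one derivative is exactly what makes the estimate work: $u_1$ and hence $\rho u_1$ lies in $H^6 \hookrightarrow L^\infty$ with uniform control, which we will use repeatedly. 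The key algebraic point, already flagged in the paper after \eqref{Eq:11_Dim_Wave}, is that the change of variables has turned the singular nonlinearity into a \emph{smooth} (polynomial, after the Taylor subtraction) function of $\rho u_1$.

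**Reduction to a scalar estimate.** First I would write $n(x) = 14x^3 - 111x^5$ and, using the definition of $N$ in \eqref{Eq:WandN}, expand
\begin{align*}
  n(\phi_0(\rho) + \rho u_1) - n(\phi_0(\rho)) - n'(\phi_0(\rho))\rho u_1
\end{align*}
by the binomial theorem. Every term that survives the subtraction of the constant and linear parts is divisible by $(\rho u_1)^2$, so after dividing by $\rho^3$ one obtains
\begin{align*}
  N(\rho, u_1(\rho)) = \rho\, u_1(\rho)^2\, P\big(\rho^2, \phi_0(\rho), u_1(\rho)\big)
\end{align*}
where $P$ is a polynomial in its arguments with coefficients that are, after also expanding $\phi_0(\rho)^2 = \tfrac{9\rho^2}{2(155-74\rho^2)}$, real-analytic and uniformly bounded together with all derivatives on $[0,1]$ (the denominator $155 - 74\rho^2 \geq 81$ never vanishes there). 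Thus $N(\rho, u_1(\rho))$ is a finite sum of terms of the form $a(\rho)\, u_1(\rho)^j$ with $j \geq 2$ and $a \in C^\infty[0,1]$, $\|a^{(k)}\|_{L^\infty(0,1)} \lesssim 1$ for all relevant $k$.

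**The Sobolev product estimate.** Then, for $\mb u, \mb v \in \mathcal{B}_\delta$, I would write the difference $N(\rho,u_1) - N(\rho,v_1)$ as a sum of terms of the form $a(\rho)\big(u_1^j - v_1^j\big) = a(\rho)(u_1 - v_1)\sum_{i=0}^{j-1} u_1^i v_1^{j-1-i}$ with $j \geq 2$, so a factor $(u_1 - v_1)$ and at least one further factor from $\{u_1, v_1\}$ is always present. Using that $H^5_{\text{rad}}(\mathbb{B}^{11})$ is a Banach algebra (since $5 > 11/2$) and that multiplication by $a \in C^\infty[0,1]$ is bounded on $H^5$, I would estimate
\begin{align*}
  \|\mb N(\mb u) - \mb N(\mb v)\| = \|N(\cdot, u_1) - N(\cdot, v_1)\|_{H^5_{\text{rad}}(\mathbb{B}^{11})} \lesssim \|u_1 - v_1\|_{H^5} \sum_{j,i} \|u_1\|_{H^5}^i \|v_1\|_{H^5}^{j-1-i},
\end{align*}
and then bound $\|u_1\|_{H^5} + \|v_1\|_{H^5} \lesssim \|u_1\|_{H^6} + \|v_1\|_{H^6} \leq \|\mb u\| + \|\mb v\| \leq 2\delta$. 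Every term on the right has at least one factor $\|u_1\|_{H^5}$ or $\|v_1\|_{H^5}$ beyond the $\|u_1 - v_1\|_{H^5}$ factor (because $j \geq 2$), so after collecting powers the sum is bounded by $(\|\mb u\| + \|\mb v\|)\,\|\mb u - \mb v\|$ times a constant depending only on $\delta$ and the degree of $P$. One also uses $\|u_1 - v_1\|_{H^5} \leq \|u_1 - v_1\|_{H^6} \leq \|\mb u - \mb v\|$. This yields exactly the claimed bound.

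**Main obstacle.** The only genuinely nontrivial point is the first step: verifying that after the Taylor subtraction and division by $\rho^3$, the result is honestly a polynomial in $\rho u_1$ with \emph{smooth, uniformly bounded} coefficients on $[0,1]$ — i.e., that no inverse power of $\rho$ survives and that the $\phi_0$-dependent coefficients, which a priori involve $(155-74\rho^2)^{-1}$, cause no trouble. The cancellation of the $\rho^{-3}$ singularity is automatic from $j \geq 2$ in $(\rho u_1)^j$, and the boundedness of the coefficients follows from $155 - 74\rho^2 \geq 81$ on $[0,1]$; this is the content of the remark in the paper that the nonlinear term "becomes smooth and therefore admits a uniform Lipschitz estimate." Everything after that is the standard Banach-algebra product estimate in $H^5$, which I would invoke rather than carry out in detail (cf. \cite{DonSch16}).
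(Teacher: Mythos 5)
Your overall strategy — expand $N(\rho,u_1)-N(\rho,v_1)$ as a polynomial in $u_1,v_1$ with smooth coefficients, factor out $(u_1-v_1)$ together with at least one extra factor of $u_1$ or $v_1$, and then invoke Sobolev multiplication — is the same as the paper's, and the cancellation analysis in your first step is correct. However, there is a concrete arithmetic error in the second step that breaks the argument as written: $H^5_{\text{rad}}(\mathbb{B}^{11})$ is \emph{not} a Banach algebra, because the algebra threshold in eleven dimensions is $s>11/2=5.5$ and $5<5.5$. The inequality ``$5>11/2$'' you assert is simply false, so the product estimate $\|fg\|_{H^5}\lesssim\|f\|_{H^5}\|g\|_{H^5}$ that your chain of estimates relies on is unavailable, and the bound $\|u_1^j-v_1^j\|_{H^5}\lesssim\|u_1-v_1\|_{H^5}\sum\|u_1\|_{H^5}^i\|v_1\|_{H^5}^{j-1-i}$ does not follow.

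The fix is easy and is exactly what the paper does: use the embedding $H^6\hookrightarrow H^5$ to bound
$\|N(\cdot,u_1)-N(\cdot,v_1)\|_{H^5_{\text{rad}}(\mathbb{B}^{11})}\leq\|N(\cdot,u_1)-N(\cdot,v_1)\|_{H^6_{\text{rad}}(\mathbb{B}^{11})}$,
and then carry out all the products in $H^6_{\text{rad}}(\mathbb{B}^{11})$, which \emph{is} an algebra since $6>11/2$. Since $u_1,v_1\in H^6$ anyway (they are the first components of $\mb u,\mb v\in\mc H$), nothing is lost, and the final bound $\lesssim(\|u_1\|_{H^6}+\|v_1\|_{H^6})\|u_1-v_1\|_{H^6}\leq(\|\mb u\|+\|\mb v\|)\|\mb u-\mb v\|$ comes out exactly as claimed. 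Alternatively you could keep the output norm at $H^5$ and use a mixed bilinear estimate of the form $\|fg\|_{H^5}\lesssim\|f\|_{H^6}\|g\|_{H^5}$, putting the extra derivative on the $u_1,v_1$ factors; but the simple bump to $H^6$ is cleaner and is what the paper implements.
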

\begin{proof}
  Based on~\eqref{Eq:Nonlinearity} and~\eqref{eq:sol9}, the difference
  $N(\rho,u)-N(\rho,v)$ can be written as
  \begin{equation}
    N(\rho,u)-N(\rho,v)=\sum_{j=1}^{4}n_j(\rho^2)(u^{j+1}-v^{j+1}),
  \end{equation}
  where $n_j\in C^{\infty}[0,1]$. For $\delta>0$,
  $\mathbf{u,v}\in\mathcal{B}_{\delta}$, and due to the bilinear
  estimate
  \[ \|f_1f_2\|_{H^6_{\text{rad}}(\mathbb{B}^{11})}\lesssim
  \|f_1\|_{H^6_{\text{rad}}(\mathbb{B}^{11})}\|f_2\|_{H^6_{\text{rad}}(\mathbb{B}^{11})}, \]
  we have
  \begin{align*}\label{eq:nonlinestimate}
    \begin{split}
      \|\mathbf{N(u)-N(v)} \|&=	\|N(\cdot,u_1)-N(\cdot,v_1)\|_{H^5_{\text{rad}}(\mathbb{B}^{11})}\\
      &\leq\|N(\cdot,u_1)-N(\cdot,v_1)\|_{H^6_{\text{rad}}(\mathbb{B}^{11})}\\
      &\lesssim\sum_{j=1}^{4}\|n_j((\cdot)^2)\|_{H^6_{\text{rad}}(\mathbb{B}^{11})}
\|u_1^{j+1}-v_1^{j+1}\|_{H^{6}_{\text{rad}}(\B^{11})}
      \\
      &\lesssim\left(\|u_1\|_{H^6_{\text{rad}}(\mathbb{B}^{11})}+\|v_1\|_{H^6_{\text{rad}}(\mathbb{B}^{11})}\right)\|u_1-v_1\|_{H^6_{\text{rad}}(\mathbb{B}^{11})}\\
      &\leq(\|\mathbf{u}\|+\|\mathbf{v}\|)\|\mathbf{u-v}\|.
    \end{split}
  \end{align*}
\end{proof}

\subsection{The abstract nonlinear Cauchy problem}
In this section we treat the existence and uniqueness of solutions to
Eq.~\eqref{Eq:PerturbedProblem} for small initial data. According to
Definition \ref{Def:Solution} we study the integral equation
\begin{equation}\label{Eq:Duhamel2}
  \mathbf{\Phi}(\tau)=\mathbf{S}(\tau)\mathbf{U}(\mathbf{v},T)+\int_{0}^{\tau}\mathbf{S}(\tau-s)\mathbf{N}(\Phi(s))ds,
\end{equation}
for $\tau\geq0$ and $\mb v\in \mathcal{H}$ small. In order to employ a fixed
point argument, we introduce the necessary definitions. First, we define a
Banach space
\begin{equation}
  \mathcal X := \{ \Phi \in C([0,\infty), \mathcal H): \| \Phi  \|_{\mathcal X} := \sup_{\tau > 0} e^{\epsilon \tau} \| \Phi(\tau) \|  < \infty \},
\end{equation}
where $\epsilon$ is sufficiently small and fixed. We denote by $\mc
X_\delta$ the closed ball in $\mathcal X$ with radius $\delta$, that is,
\begin{equation}
  \mc X_{\delta} := \{\Phi \in \mc X: \| \Phi \|_{\mc X} \leq \delta \}.
\end{equation}
Finally, we define the correction term
\begin{align*}
  \mb C(\Phi, \mb u) := \mb P \left(\mb u + \int_0^{\infty} e^{-s}  \mb N(\Phi(s)) ds \right),
\end{align*}
and set
\begin{align*}
  \mb K(\Phi, \mb u)(\tau) :=\mb S(\tau)(\mb u-\mb C(\Phi, \mb u))  + \int_0^{\tau} \mb S(\tau - s)  \mb N(\Phi(s)) ds .
\end{align*}
The correction term serves the purpose of suppressing the exponential
growth of the semigroup $\mb S(\tau)$ on the unstable space. We have
the following result.

\begin{theorem} \label{Th:GlobalEx_ModEq} There exist constants
  $\delta, C>0$ such that for every $\mb u \in \mathcal{H}$ which
  satisfies $\| \mb u \|\leq \frac{\delta}{C}$, there exists a unique
  $\Phi_{\mb u}\in \mathcal{X}_{\delta}$ such that
  \begin{equation}\label{Eq:DuhamelK}
    \Phi_{\mb u}=\mb K(\Phi_{\mb u},\mb u).
  \end{equation}
  In addition, the solution $\Phi_{\mb u}$ is unique in the whole space
  $\mathcal{X}$ and the solution map $\mb u \mapsto \Phi_{\mb u}$ is
  Lipschitz continuous.
\end{theorem}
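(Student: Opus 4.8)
\noindent The plan is to realize $\Phi_{\mb u}$ as the fixed point of the map $\Phi\mapsto\mb K(\Phi,\mb u)$ on the closed ball $\mc X_\delta$ by the contraction mapping principle, and then to read off uniqueness in all of $\mc X$ and Lipschitz dependence on $\mb u$ from the same estimates. Two facts drive the argument. First, $\mb N(\mb 0)=\mb 0$ by~\eqref{Eq:Nonlinearity} and~\eqref{Eq:WandN}, so Lemma~\ref{Lem:Nonlinearity} (applied with one argument equal to $\mb 0$) gives $\|\mb N(\Phi(s))\|\lesssim\|\Phi(s)\|^{2}$; if $\Phi\in\mc X_\delta$ then $\|\Phi(s)\|\leq\delta e^{-\epsilon s}\leq\delta$, so $\Phi(s)\in\mc B_\delta$ for every $s\geq0$ and $\|\mb N(\Phi(s))\|\lesssim\delta^{2}e^{-2\epsilon s}$. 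Second, the semigroup splits as $\mb S(\tau)=\mb P\mb S(\tau)+(1-\mb P)\mb S(\tau)$, with $\mb S(\tau)\mb P=e^{\tau}\mb P$ by~\eqref{Eq:SemigroupUnstable} and $\|(1-\mb P)\mb S(\tau)\mb f\|\lesssim e^{-\epsilon\tau}\|\mb f\|$ by~\eqref{Eq:SemigroupDecay}. The correction term $\mb C$ is built precisely so that the unstable component of $\mb K(\Phi,\mb u)$ no longer grows: using $\mb P^{2}=\mb P$, $(1-\mb P)\mb C(\Phi,\mb u)=\mb 0$ and the commutation of $\mb P$ with $\mb S(\tau)$, one computes
\begin{gather*}
  \mb P\,\mb K(\Phi,\mb u)(\tau)=-e^{\tau}\int_{\tau}^{\infty}e^{-s}\,\mb P\,\mb N(\Phi(s))\,ds,\\
  (1-\mb P)\mb K(\Phi,\mb u)(\tau)=\mb S(\tau)(1-\mb P)\mb u+\int_{0}^{\tau}(1-\mb P)\mb S(\tau-s)\mb N(\Phi(s))\,ds.
\end{gather*}

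\noindent For the self-mapping property I would bound the first line by $e^{\tau}\int_{\tau}^{\infty}e^{-s}\delta^{2}e^{-2\epsilon s}\,ds\lesssim\delta^{2}e^{-2\epsilon\tau}$ and the second by $e^{-\epsilon\tau}\|\mb u\|+\int_{0}^{\tau}e^{-\epsilon(\tau-s)}\delta^{2}e^{-2\epsilon s}\,ds\lesssim e^{-\epsilon\tau}\bigl(\|\mb u\|+\delta^{2}\bigr)$, so that $\|\mb K(\Phi,\mb u)\|_{\mc X}\leq C_{0}\bigl(\|\mb u\|+\delta^{2}\bigr)\leq C_{0}\bigl(\tfrac{\delta}{C}+\delta^{2}\bigr)$; choosing $\delta$ so small that $C_{0}\delta\leq\tfrac12$ and then $C$ so large that $C_{0}/C\leq\tfrac12$ forces $\|\mb K(\Phi,\mb u)\|_{\mc X}\leq\delta$, i.e.\ $\mb K(\cdot,\mb u)$ maps $\mc X_\delta$ into itself. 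For the contraction, when $\Phi,\Psi\in\mc X_\delta$ the data $\mb u$ cancels except through $\mb C$, and $\mb C(\Phi,\mb u)-\mb C(\Psi,\mb u)=\mb P\int_{0}^{\infty}e^{-s}\bigl(\mb N(\Phi(s))-\mb N(\Psi(s))\bigr)ds$; the same two-part computation, now with the Lipschitz bound $\|\mb N(\Phi(s))-\mb N(\Psi(s))\|\lesssim\delta e^{-2\epsilon s}\|\Phi-\Psi\|_{\mc X}$ from Lemma~\ref{Lem:Nonlinearity}, yields $\|\mb K(\Phi,\mb u)-\mb K(\Psi,\mb u)\|_{\mc X}\leq C_{1}\delta\|\Phi-\Psi\|_{\mc X}$, which is a contraction once $\delta$ is shrunk so that $C_{1}\delta<1$. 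The Banach fixed point theorem then produces the unique $\Phi_{\mb u}\in\mc X_\delta$ solving~\eqref{Eq:DuhamelK}.

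\noindent For the Lipschitz dependence I would write $\Phi_{\mb u}-\Phi_{\mb v}=\bigl(\mb K(\Phi_{\mb u},\mb u)-\mb K(\Phi_{\mb v},\mb u)\bigr)+\bigl(\mb K(\Phi_{\mb v},\mb u)-\mb K(\Phi_{\mb v},\mb v)\bigr)$: the first bracket is controlled by the contraction estimate, while in the second bracket the $\mb N$-integrals cancel and the algebraic identity $\mb C(\Phi,\mb u)-\mb C(\Phi,\mb v)=\mb P(\mb u-\mb v)$ reduces it to $\mb S(\tau)\bigl((\mb u-\mb v)-\mb P(\mb u-\mb v)\bigr)=\mb S(\tau)(1-\mb P)(\mb u-\mb v)$, which is $\lesssim e^{-\epsilon\tau}\|\mb u-\mb v\|$. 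Combining, $\|\Phi_{\mb u}-\Phi_{\mb v}\|_{\mc X}\leq C_{1}\delta\|\Phi_{\mb u}-\Phi_{\mb v}\|_{\mc X}+C_{2}\|\mb u-\mb v\|$, hence $\|\Phi_{\mb u}-\Phi_{\mb v}\|_{\mc X}\lesssim\|\mb u-\mb v\|$. Uniqueness of the solution in the whole space $\mc X$ (not merely in $\mc X_\delta$) is the one point needing a separate, by now standard, argument: one shows that any $\mc X$-solution must in fact lie in $\mc X_\delta$ — essentially because the exponential weight forces $\|\Phi(\tau)\|\to0$, after which the contraction estimate applies — and then invokes the uniqueness already established in $\mc X_\delta$.

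\noindent All the estimates above are routine given Proposition~\ref{Prop:UnifBoundRes}, the semigroup bounds~\eqref{Eq:SemigroupUnstable}--\eqref{Eq:SemigroupDecay} and Lemma~\ref{Lem:Nonlinearity}; the only genuinely structural ingredient, and the one I would set up most carefully, is the identity for $\mb P\,\mb K(\Phi,\mb u)$ — that is, verifying that the correction term $\mb C$ exactly annihilates the $e^{\tau}$-growth produced by the unstable eigenvalue $\la=1$ — together with the order in which the constants are fixed ($\epsilon$ and $\delta$ small before $C$ large).
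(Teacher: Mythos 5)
Your argument is correct and reproduces the standard fixed-point scheme that the paper itself does not spell out but delegates to Theorem~3.7 of Chatzikaleas--Donninger--Glogi\'c (which in turn follows Donninger--Sch\"orkhuber): the $\mb P$/$(1-\mb P)$ split of $\mb K$, the identity $\mb P\,\mb K(\Phi,\mb u)(\tau)=-e^{\tau}\int_{\tau}^{\infty}e^{-s}\mb P\mb N(\Phi(s))\,ds$ showing that $\mb C$ exactly cancels the $e^{\tau}$-growth, the quadratic bound $\|\mb N(\Phi(s))\|\lesssim\delta^2 e^{-2\epsilon s}$ from Lemma~\ref{Lem:Nonlinearity} with $\mb N(\mb 0)=\mb 0$, the algebraic identity $\mb C(\Phi,\mb u)-\mb C(\Phi,\mb v)=\mb P(\mb u-\mb v)$ for the Lipschitz estimate, and the order in which $\epsilon$, $\delta$, $C$ are fixed are all exactly as in the cited reference.

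One small caveat on the piece you explicitly defer: the heuristic that uniqueness in all of $\mc X$ follows ``because the exponential weight forces $\|\Phi(\tau)\|\to 0$'' is not by itself sufficient — decay of $\|\Phi(\tau)\|$ does not bound the weighted supremum $\|\Phi\|_{\mc X}$ by $\delta$, so one cannot immediately place a putative second $\mc X$-solution into $\mc X_{\delta}$ and invoke the contraction. The argument in the cited papers is slightly more delicate (one combines the a~priori bound $\|\tilde\Phi\|_{\mc X}\le C_0(\|\mb u\|+\|\tilde\Phi\|_{\mc X}^2)$ with a continuity/bootstrap step, or works with time translates). Since you flag this as a standard technicality and the paper itself only cites the external proof, this does not change the assessment: your proposal takes essentially the same route as the paper.
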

\noindent The proof coincides with the one of Theorem 3.7 in
\cite{ChaDonnGlo17}.

We now study the initial data $\mb U(\mb v,T)$, see
~\eqref{Eq:InitialDataU}, and prove its continuity in $T$ near
$T_0$. For that reason we define
\[\mc H^R := H_{\text{rad}}^{6}\times
H_{\text{rad}}^{5}(\mathbb{B}^{11}_R),\] with the induced norm
\[ \| \mb w \|^2_{\mc H^R} = \| w_1(|\cdot|) \|^2_{H^{6}(\B^{11}_R)} +
\| w_2(|\cdot|) \|^2_{H^{5}(\B^{11}_R)}.\]
\begin{lemma}\label{Le:InitialData}
  Fix $T_0>0$. Let $|\cdot|^{-1}\mb v \in \mc H^{T_0+\delta}$ for
  $\delta$ positive and sufficiently small. Then the map
  \[ T \mapsto \mb U(\mb v, T): [T_0-\delta, T_0 + \delta ] \to \mc
  H \]
  is continuous. Furthermore, for all
  $T \in [T_0- \delta, T_0 + \delta]$,
  \[ \||\cdot|^{-1} \mb v \|_{\mc H^{T_0 + \delta}} \leq \delta
  \implies \|\mb U(\mb v, T) \| \lesssim \delta. \]
\end{lemma}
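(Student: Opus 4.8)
The plan is to prove the two claims in Lemma~\ref{Le:InitialData} separately, since the bound is a quantitative consequence of the same computations that yield continuity. Recall from~\eqref{Eq:InitialDataU} that
\[
  \mb U(\mb v, T)(\rho)=\begin{bmatrix}\tfrac{1}{\rho}\phi_0\!\left(\tfrac{T}{T_0}\rho\right)-\tfrac{1}{\rho}\phi_0(\rho)\vspace{1mm}\\[1mm]\tfrac{T^2}{T_0^2}\phi_0'\!\left(\tfrac{T}{T_0}\rho\right)-\phi_0'(\rho)\end{bmatrix}+\mb V(\mb v,T)(\rho),
\]
so it suffices to treat the ``geometric'' piece $\mb W(T):=\mb U(\mb v,T)-\mb V(\mb v,T)$ and the ``data'' piece $\mb V(\mb v,T)$ independently. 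For the geometric piece, the point is that $\phi_0\in C^\infty[0,1]$ extends to a smooth function on a slightly larger interval (by the choice of $g$ in Proposition~\ref{Prop:NegCurv}, $\phi_0$ is real-analytic wherever $b-\rho^2>0$, in particular on $[0,\tfrac{T}{T_0}]$ for $T$ close to $T_0$), so $\rho\mapsto \tfrac{1}{\rho}\phi_0(\tfrac{T}{T_0}\rho)$ is smooth on $[0,1]$ with all derivatives depending continuously — indeed smoothly — on $T\in[T_0-\delta,T_0+\delta]$. Hence $T\mapsto \mb W(T)$ is continuous (even $C^1$) into $\mc H$, and since $\mb W(T_0)=0$ we get $\|\mb W(T)\|\lesssim |T-T_0|\leq\delta$ by the mean value theorem applied in the Banach space $\mc H$.

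For the data piece, I would use the scaling/restriction identity. Writing $\mb V(\mb v,T)(\rho)=\bigl(\tfrac1\rho F(T\rho),\tfrac T\rho G(T\rho)\bigr)$, a change of variables shows that for any radial $H^m(\B^{11}_R)$ function the rescaled function $\xi\mapsto f(T\xi)$ lies in $H^m(\B^{11}_{R/T})$ with norm controlled by $\|f\|_{H^m(\B^{11}_R)}$ up to a constant depending only on $T$ (bounded for $T$ in the compact interval), using that $m=6$ or $5$ and $11$ is fixed. More precisely, for $T\le T_0+\delta$ one has $\B^{11}_1\subseteq\B^{11}_{(T_0+\delta)/T}$, so the map $f\mapsto f(T\,\cdot)\big|_{\B^{11}_1}$ sends $H^m(\B^{11}_{T_0+\delta})$ boundedly into $H^m(\B^{11}_1)$ with a uniform bound. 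Applying this to $F=|\cdot|\,(\tfrac1{|\cdot|}F)$ and likewise $G$ — i.e., absorbing the factor $\tfrac1\rho$ into the hypothesis $|\cdot|^{-1}\mb v\in\mc H^{T_0+\delta}$ — gives
\[
  \|\mb V(\mb v,T)\|=\Bigl\|\bigl(\tfrac{1}{|\cdot|}F(T\,|\cdot|),\ \tfrac{T}{|\cdot|}G(T\,|\cdot|)\bigr)\Bigr\|\lesssim \bigl\||\cdot|^{-1}\mb v\bigr\|_{\mc H^{T_0+\delta}}\leq\delta,
\]
uniformly in $T\in[T_0-\delta,T_0+\delta]$, which is the asserted bound. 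Continuity of $T\mapsto\mb V(\mb v,T)$ in $\mc H$ then follows by a density argument: the estimate just proved shows $T\mapsto\mb V(\mb v,T)$ is a bounded family of operators applied to $\mb v$, smooth functions are dense, for smooth compactly-supported $\mb v$ the map is manifestly continuous (differentiate under the integral), and an $\tfrac\ve3$-argument upgrades this to all $\mb v$ with $|\cdot|^{-1}\mb v\in\mc H^{T_0+\delta}$.

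The main technical obstacle is the near-singular factor $1/\rho$ at $\rho=0$ in both pieces: one must verify that dividing by $\rho$ genuinely lands in $H^6_{\mathrm{rad}}(\B^{11})$ rather than merely in $L^2$. For the geometric piece this is automatic because $\phi_0$ is odd, so $\phi_0(\tfrac{T}{T_0}\rho)-\phi_0(\rho)$ vanishes to first order at $\rho=0$ and $\tfrac1\rho$ times it is smooth and even. For the data piece it is exactly what the hypothesis $|\cdot|^{-1}\mb v\in\mc H^{T_0+\delta}$ encodes, so the scaling estimate must be set up to act on $|\cdot|^{-1}F$ directly rather than on $F$. I would also take care that the constants in the bilinear/scaling inequalities are uniform for $T$ ranging over the compact interval $[T_0-\delta,T_0+\delta]$ — this is where smallness of $\delta$ (ensuring $T$ stays bounded away from $0$) enters. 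These checks are routine once the change of variables $\xi\mapsto T\xi$ and the evenness of $\phi_0$ are exploited, so I expect no serious difficulty beyond bookkeeping.
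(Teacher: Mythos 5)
Your proposal is correct and takes essentially the same route as the paper: decompose $\mb U(\mb v,T)$ into the $\phi_0$-difference (handled via smoothness and oddness of $\phi_0$, giving an $O(|T-T_0|)$ bound) and the data piece $\mb V(\mb v,T)$ (handled via the uniform scaling estimate $\|f(T\,\cdot)\|_{H^m(\B^{11})}\lesssim\|f\|_{H^m(\B^{11}_{T_0+\delta})}$ plus density of $C^\infty_{\mathrm{even}}$ and an $\varepsilon$-argument for continuity). The paper organizes the continuity argument as a single triangle-inequality chain with an explicit smooth approximant $\tilde{v}_1$ rather than phrasing it as a bounded-family-of-operators/density upgrade, but the ingredients and the points at which they enter are the same.
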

\begin{proof}
  We prove the result for $T_0=1$ only, as the general case is treated
  similarly. Assume $|\cdot|^{-1}\mb v\in \mathcal{H}^{1+\delta}$ for
  $\delta$ positive but less than $\frac{T_0}{2}=\frac{1}{2}$. We
  first introduce some auxiliary facts. Namely, by scaling we see that
  for $f\in H^{6}_{\text{rad}}(\B^{11}_{1+\delta})$ and
  $T\in[1-\delta,1+\delta]$
  \[ \| f(|T\cdot|) \|_{H^{6}(\B^{11}_{1})}\lesssim \| f(|\cdot|)
  \|_{H^{6}(\B^{11}_{1+\delta})}.  \]
  Furthermore, from the density of
  $C^{\infty}_{\text{even}}[0,1+\delta]$ in
  $H^{6}_{\text{rad}}(\B^{11}_{1+\delta})$ we conclude that given
  $\ve>0$, there exists a
  $\tilde{v}_1\in C^{\infty}_{\text{even}}[0,1+\delta]$ such that
  $\| |\cdot|^{-1} v_1(|\cdot|)-\tilde{v}_1(|\cdot|)
  \|_{H^{6}_{\text{rad}}(\B^{11}_{1+\delta})}<\ve$.
  Also, the functions $\frac{1}{T\rho}\phi_0(T\rho)$ and
  $\tilde{v}(T\rho)$ are smooth on $[0,1]$ for
  $T\in[1-\delta,1+\delta]$.  Therefore,
  \begin{equation}\label{Eq:Phi0limit}
    \lim_{T\rightarrow \widetilde{T}}\| |T\cdot|^{-1}\big (\phi_0(|T\cdot|)-\phi_0(|\widetilde{T}\cdot|) \|_{H^{6}(\B^{11})}+ \|\tilde{v}_1(|T\cdot|)-\tilde{v}_1(|\widetilde{T}\cdot|) \|_{H^{6}(\B^{11})}=0.
  \end{equation} 
  Using these facts, we prove the continuity of the first component of the
  map $T \rightarrow \mb U(\mb v,T)$. Namely, given $\ve>0$, there
  exists a $\tilde{v}_1\in C^{\infty}_{\text{even}}[0,1+\delta]$ such
  that for $T,\widetilde{T}\in[1-\delta,1+\delta]$ we have
  \begin{align*}
    \|  [\mb U&(\mb v,T)]_1-[\mb U(\mb v,\widetilde{T})]_1 \|_{H^6(\B^{11})}\\
    =&\big\| |\cdot|^{-1}\phi_0(|T\cdot|) + |\cdot|^{-1}v_1(|T\cdot|) - |\cdot|^{-1}\phi_0(|\widetilde{T}\cdot|) - |\cdot|^{-1}v_1(|\widetilde{T}\cdot|) \big\|_{H^6(\B^{11})}\\
    \lesssim &\big\| |\cdot|^{-1}\big( \phi_0(|T\cdot|) - \phi_0(|\widetilde{T}\cdot|)\big) \big\|_{H^6(\B^{11})}  +
               \big\| |T\cdot|^{-1} v_1(|T\cdot|) - \tilde{v}_1(|T\cdot|) \big\|_{H^6(\B^{11})}\\
              &+\big\| \tilde{v}_1(|T\cdot|) - \tilde{v}_1(|\widetilde{T}\cdot|)\big\|_{H^6(\B^{11})} +
		\big\| |T\cdot|^{-1} \tilde{v}_1(|\widetilde{T}\cdot|) - v_1(|\widetilde{T}\cdot|) \big\|_{H^6(\B^{11})}\\
    \lesssim& \big\| |T\cdot|^{-1}\big( \phi_0(|T\cdot|) - \phi_0(|\widetilde{T}\cdot|)\big) \big\|_{H^6(\B^{11})}  +
              \big\| |\cdot|^{-1} v_1(|\cdot|) - \tilde{v}_1(|\cdot|) \big\|_{H^6(\B_{1+\delta}^{11})} \\
              &+\big\| \tilde{v}_1(|T\cdot|) - \tilde{v}_1(|\widetilde{T}\cdot|) \big\|_{H^6(\B^{11})}\\
    \leq& \big\| |T\cdot|^{-1}\big (\phi_0(|T\cdot|)-\phi_0(|\widetilde{T}\cdot|) \big\|_{H^{6}(\B^{11})}+ \big\|\tilde{v}_1(|T\cdot|)-\tilde{v}_1(|\widetilde{T}\cdot|) \big\|_{H^{6}(\B^{11})}+\ve,
  \end{align*}
  This together with~\eqref{Eq:Phi0limit} implies that
  $[\mb U(\mb v,T)]_1$ is continuous. The second component is treated analogously.
 Now, given
  $\big\||\cdot|^{-1}\mb v\big\|_{\mathcal{H}^{1+\delta}}\leq\delta$
  and $T\in[1-\delta,1+\delta]$, we have
  \begin{align*}
    \|  [\mb U(\mb v,T)]_1\|_{H^6(\B^{11})} &=\big\| |\cdot|^{-1}\phi_0(|T\cdot|)-|\cdot|^{-1}\phi_0(|\cdot|)+|\cdot|^{-1}v_1(|T\cdot|) \big\|_{H^6(\B^{11})}\\
                                            &\lesssim |T-1|+\big\||\cdot|^{-1}v_1\big\|_{H^6(\B^{11}_{1+\delta})} \lesssim \delta.
  \end{align*}
  We obtain a similar estimate for the second component and finally
  deduce that
  \[ \|\mb U(\mb v,T)\|\lesssim \delta. \]
\end{proof}
As already mentioned, the unstable eigenvalue $\la=1$ is present due
to the freedom of choice of the parameter $T$, and is therefore not
considered a ``real'' instability of the linear problem. The
following theorem is the precise version of this statement. Namely, for
a given $T_0$ and small enough initial data $\mb v$, there exists a
$T_{\mb v}$ close to $T_0$ that makes the correction term
$\mb C(\Phi_{\mb U(\mb v,T_{\mb v})},\mb U(\mb v,T_{\mb v}))$ vanish. This in turn
allows for proving the existence and uniqueness of an exponentially
decaying solution to Eq.~\eqref{Eq:Duhamel2}.
\begin{theorem}\label{Thm:GlobalExistence}
  Fix $T_0>0$. Then there exist $\delta,M > 0$ such that for any
  $\mb v$ that satisfies
  \[ \big\| |\cdot|^{-1}\mb
  v\big\|_{\mathcal{H}^{T_0+\delta}}\leq\frac{\delta}{M} \]
  there exists a $T \in [T_0-\delta,T_0+\delta]$ and a function
  $\Phi \in \mathcal{X}_{\delta}$ which satisfies
  \begin{align}\label{Eq:DuhamelFinal}
    \Phi(\tau)=\mb S(\tau)\mb U(\mb v,T)+\int_0^{\tau}\mb S(\tau-s)\mb N(\Phi(s))ds
  \end{align}
  for all $\tau > 0$.  Moreover, $\Phi$ is the unique solution of this
  equation in $C([0,\infty),\mc H)$.
\end{theorem}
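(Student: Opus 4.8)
The plan is to combine Theorem~\ref{Th:GlobalEx_ModEq}, which solves the \emph{modified} equation~\eqref{Eq:DuhamelK} for every small datum, with a Brouwer-type fixed point argument in the parameter $T$ that kills the correction term $\mathbf{C}$. First I would fix $T_0>0$ and let $\delta>0$ be small (to be shrunk finitely often). Given $\mathbf v$ with $\||\cdot|^{-1}\mathbf v\|_{\mathcal H^{T_0+\delta}}\leq \delta/M$, Lemma~\ref{Le:InitialData} guarantees that $T\mapsto \mathbf U(\mathbf v,T)$ is a continuous map from $[T_0-\delta,T_0+\delta]$ into $\mathcal H$ with $\|\mathbf U(\mathbf v,T)\|\lesssim \delta$; choosing $M$ large enough we can ensure $\|\mathbf U(\mathbf v,T)\|\leq \delta/C$, the smallness threshold of Theorem~\ref{Th:GlobalEx_ModEq}. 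That theorem then produces, for each such $T$, a unique $\Phi_{\mathbf U(\mathbf v,T)}\in\mathcal X_\delta$ solving~\eqref{Eq:DuhamelK}, and the solution map $\mathbf u\mapsto\Phi_{\mathbf u}$ is Lipschitz. Composing with the continuous map $T\mapsto\mathbf U(\mathbf v,T)$, the assignment $T\mapsto \Phi_{\mathbf U(\mathbf v,T)}$ is continuous into $\mathcal X_\delta$, and hence so is $T\mapsto \mathbf C\big(\Phi_{\mathbf U(\mathbf v,T)},\mathbf U(\mathbf v,T)\big)\in\langle\mathbf g\rangle$.

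The decisive step is to show this correction term vanishes for some $T$. Since $\mathbf C$ takes values in the one-dimensional space $\langle\mathbf g\rangle=\mathrm{rg}\,\mathbf P$, I would write $\mathbf C\big(\Phi_{\mathbf U(\mathbf v,T)},\mathbf U(\mathbf v,T)\big)=c(T)\,\mathbf g$ for a continuous scalar function $c:[T_0-\delta,T_0+\delta]\to\mathbb R$ (or $\mathbb C$, but the natural parametrization makes it real), and then apply the intermediate value theorem. For this to work I need $c(T_0-\delta)$ and $c(T_0+\delta)$ to have opposite signs. The mechanism is that the \emph{leading} $T$-dependence of $\mathbf C$ comes from the explicit first term $\mathbf P\,\mathbf U(\mathbf v,T)$, and within $\mathbf U(\mathbf v,T)$ from the difference of self-similar profiles in~\eqref{Eq:InitialDataU}: one computes
\[
\frac{d}{dT}\bigg|_{T=T_0}\begin{bmatrix}\tfrac1\rho\phi_0\!\left(\tfrac{T}{T_0}\rho\right)\\[1mm]\tfrac{T^2}{T_0^2}\phi_0'\!\left(\tfrac{T}{T_0}\rho\right)\end{bmatrix}=\frac{1}{T_0}\begin{bmatrix}\phi_0'(\rho)\\[1mm]\rho\phi_0''(\rho)+2\phi_0'(\rho)\end{bmatrix}=\frac{1}{T_0}\mathbf g(\rho),
\]
so that $\mathbf P$ applied to this derivative is $\tfrac1{T_0}\mathbf g\neq 0$, i.e. the $T$-derivative of $\mathbf C$ at $\mathbf v=0$ is a nonzero multiple of $\mathbf g$. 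Hence for $\mathbf v=0$ the scalar $c$ is strictly monotone near $T_0$ and changes sign across $[T_0-\delta,T_0+\delta]$ with a gap of order $\delta$; the remaining contributions to $c$ — the projection of $\mathbf V(\mathbf v,T)$ and of $\int_0^\infty e^{-s}\mathbf N(\Phi(s))\,ds$ — are $O(\delta/M)$ and $O(\delta^2)$ respectively by Lemma~\ref{Le:InitialData}, Lemma~\ref{Lem:Nonlinearity} and $\|\Phi_{\mathbf u}\|_{\mathcal X}\leq\delta$. Choosing $M$ large enough that these perturbations are dominated by the $O(\delta)$ main term, $c$ still changes sign on $[T_0-\delta,T_0+\delta]$, so the intermediate value theorem yields a $T=T_{\mathbf v}$ in that interval with $c(T_{\mathbf v})=0$, i.e. $\mathbf C\big(\Phi_{\mathbf U(\mathbf v,T_{\mathbf v})},\mathbf U(\mathbf v,T_{\mathbf v})\big)=0$.

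With this $T$ fixed, set $\Phi:=\Phi_{\mathbf U(\mathbf v,T)}$. By construction $\Phi\in\mathcal X_\delta$ and it solves~\eqref{Eq:DuhamelK}; since the correction term vanishes, $\mathbf K(\Phi,\mathbf U(\mathbf v,T))$ reduces exactly to the right-hand side of~\eqref{Eq:DuhamelFinal}, so $\Phi$ solves~\eqref{Eq:DuhamelFinal}. For the uniqueness statement in $C([0,\infty),\mathcal H)$: any solution $\Psi$ of~\eqref{Eq:DuhamelFinal} in $C([0,\infty),\mathcal H)$ is also a solution of~\eqref{Eq:DuhamelK} with this same $T$ \emph{provided} $\mathbf C(\Psi,\mathbf U(\mathbf v,T))=0$; applying $\mathbf P$ to~\eqref{Eq:DuhamelFinal} and using $\mathbf S(\tau)\mathbf P=e^\tau\mathbf P$ together with $\mathbf P\mathbf N(\Psi(s))$ integrability shows that boundedness of $\mathbf P\Psi(\tau)$ as $\tau\to\infty$ forces exactly $\mathbf C(\Psi,\mathbf U(\mathbf v,T))=0$, and a continuous solution of a Duhamel equation with a semigroup of negative growth bound off the unstable subspace is automatically bounded, hence lies in $\mathcal X_\delta$ after possibly enlarging $\delta$; then the uniqueness clause of Theorem~\ref{Th:GlobalEx_ModEq} (uniqueness in all of $\mathcal X$) finishes the argument.

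The main obstacle is the sign-change/degree argument for $c(T)$: one must verify that the explicit profile derivative is genuinely $\tfrac1{T_0}\mathbf g$ (a short but essential computation) so that its $\mathbf P$-projection is nonzero, and then quantify carefully that all $\mathbf v$- and $\mathbf N$-dependent corrections to $c$ are strictly smaller than the main $O(\delta)$ term on the endpoints of $[T_0-\delta,T_0+\delta]$ — this is exactly what forces the hypothesis $\||\cdot|^{-1}\mathbf v\|\leq\delta/M$ with $M$ large, rather than merely $\leq\delta$. The rest is bookkeeping: continuity of the various maps and translating between the modified and the original Duhamel formulations.
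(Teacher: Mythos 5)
Your proposal follows essentially the same route as the paper: Lemma~\ref{Le:InitialData} for continuity and smallness of $T\mapsto\mathbf U(\mathbf v,T)$, Theorem~\ref{Th:GlobalEx_ModEq} to produce the family $\Phi_{\mathbf U(\mathbf v,T)}$, the computation $\partial_T\big(\tfrac1\rho\phi_0(\tfrac{T}{T_0}\rho),\tfrac{T^2}{T_0^2}\phi_0'(\tfrac{T}{T_0}\rho)\big)\big|_{T=T_0}=\tfrac1{T_0}\mathbf g$ as the source of the dominant $O(T-T_0)$ term in $\langle\mathbf C,\mathbf g\rangle$, and then a one-dimensional degree argument (you use the intermediate value theorem, the paper invokes a contraction/fixed point in $T$ \`a la \cite{DonSch16}, Thm.~4.15 — these are interchangeable here) to produce $T_{\mathbf v}$ annihilating $\mathbf C$. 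Your sketch of uniqueness in $C([0,\infty),\mathcal H)$, via applying $\mathbf P$ to the Duhamel equation and using the exponential dichotomy, is consistent with what the cited reference carries out, though as stated it tacitly needs a local-in-time uniqueness step to upgrade a merely continuous solution to a bounded one before the integrability of $\mathbf P\mathbf N(\Psi(s))$ can be used; this is the one point where the bookkeeping is more subtle than the proposal suggests, but it matches the level of detail the paper itself defers.
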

\begin{proof}
  Let $T_0>0$ be fixed. We first prove that for any $T$ in a small
  neighborhood of $T_0$ and small enough initial data $\mb v$ there
  exists a unique solution to Eq.~\eqref{Eq:DuhamelK} for
  $\mb u=\mb U(\mb v,T)$.  From Lemma \ref{Le:InitialData} we deduce
  the existence of sufficiently small $\delta$ and sufficiently large
  $M>0$ so that for every $T\in[T_0-\delta,T_0+\delta]$,
  $\big\| |\cdot|^{-1} \mb v \big\|_{\mathcal{H}^{T_{0}+\delta} } \leq
  \frac{\delta}{M}$
  implies $\|\mb U(\mb v, T)\|_{\mathcal{H}}\leq\frac{\delta}{C}$ for
  a large enough $C>0$. Via Theorem \ref{Th:GlobalEx_ModEq} this
  yields the unique solution to Eq.~\eqref{Eq:DuhamelK} for every $T$
  in the designated range.  It remains to show that for small enough
  $\mathbf v$, there exists a particular
  $T_{\mathbf{v}} \in [T_{0}-\delta,T_{0}+\delta]$ that makes the
  correction term vanish, i.e.,
  $\mathbf C(\Phi_{\mathbf U(\mathbf v,T_{\mathbf
    v})},\mathbf U(\mathbf v,T_{\mathbf
    v}))=0$.
  Since $\mathbf C$ has values in
  $\rg \mathbf P=\langle\mathbf g\rangle$, the latter is equivalent to
  the existence of a $T_{\mb v} \in [T_0-\delta,T_0+\delta]$ such that
  \begin{align}
    \label{Eq:Tv}
    \langle \mathbf{C} \left( \Phi _{\mathbf{U}( \mathbf{v},T_{\mathbf{v}} )},\mathbf{U} \left( \mathbf{v},T_{\mathbf{v}} \right) \right),\mathbf{g} \rangle_{\mathcal H} = 0.
  \end{align}
By definition, we have
  \[ \partial_T
  \begin{bmatrix}
    \frac{1}{\rho}\phi_0(\frac{T}{T_0}\rho) \\
    \frac{T^2}{T_0^2}\phi_0'(\frac{T}{T_0}\rho)
  \end{bmatrix} \Bigg|_{T=T_0}=\frac{\mb g(\rho)}{T_0} \]
and this yields the expansion
  \[ \langle \mathbf{C} \left( \Phi _{\mathbf{U}( \mathbf{v},T_{\mathbf{v}} )},\mb U (\mb{v},T )
  \right),\mb{g} \rangle_{\mathcal H}=\frac{\|\mb g\|^2}{T_0}(T-T_0)
  +O((T-T_0)^2)+O(\tfrac{\delta}{M}T^0)+O(\delta^2T^0).
  \]
A simple fixed point argument now proves \eqref{Eq:Tv}, see
  \cite{DonSch16}, Theorem 4.15 for full details.
\end{proof}

\begin{proof}[Proof of Theorem \ref{Th:Main}]
  Fix $T_0>0$ and assume the radial initial data $u [0]$ satisfy
  \begin{align*}
    \left  \| |\cdot|^{-1} \Big( u [0] -u^{T_{0}}[0]  \Big) \right \|_{ H^6 (\mathbb{B}_{T_{0}+\delta}^{11}) \times H^5 (\mathbb{B}_{T_{0}+\delta}^{11}  )} \leq \frac{\delta}{M_0^2}
  \end{align*}
  with $\delta, M_0>0$ to be chosen later. We set
  $\mathbf v:=u[0]-u^{T_0}[0]$, see Section
  \ref{Sec:PerturbedProblem}.  Then we have
  \begin{align*}
    \left \| |\cdot|^{-1} \mathbf{v} \right \|_{\mathcal{H}^{T_{0}+\delta}}  = \left \| |\cdot|^{-1} \Big( u [0] -u^{T_0}[0] \Big) 
    \right \|_{\mathcal{H}^{T_{0}+\delta}} \leq \frac{\delta}{M_0^2}.
  \end{align*}
  Now, upon choosing $\delta>0$ sufficiently small and $M_0>0$
  sufficiently large, Theorem \ref{Thm:GlobalExistence} yields a
  $T \in [T_0-\frac{\delta}{M_0},T_0+\frac{\delta}{M_0}]\subset
  [1-\delta,1+\delta]$ such that there exists a unique solution
  $\Phi=(\varphi_1,\varphi_2) \in \mathcal{X}$ to Eq.~\eqref{Eq:DuhamelFinal} with
  $\| \Phi (\tau) \| \leq \frac{\delta}{M_0} e^{-2\epsilon \tau}$ for all
  $\tau\geq 0$ and some $\epsilon>0$.  Therefore, by construction,
  \[ u(t,r)=u^T(t,r)+\frac{r}{T-t}\,\vp_1\left
    (\log\frac{T}{T-t},\frac{r}{T-t}\right ) \]
  solves the original wave maps equation \eqref{Eq:CorWMd9}. Moreover,
  \[ \partial_t u(t,r)=\partial_t u^T(t,r)+\frac{r}{(T-t)^2}\,\vp_2
  \left (\log\frac{T}{T-t},\frac{r}{T-t}\right ). \] Therefore,
  \begin{align*}
    (T-t)^{k-\frac{9}{2}}&\left \||\cdot|^{-1}\left (u(t,|\cdot|)-u^T(t,|\cdot|)\right )
                           \right \|_{\dot H^k(\mathbb B^{11}_{T-t})}  \\
                         &=(T-t)^{k-\frac{11}{2}}\left \|\vp_1\left (\log\frac{T}{T-t},\frac{|\cdot|}{T-t}\right ) \right \|_{\dot H^k(\mathbb B^{11}_{T-t})} \\
                         &=\left \|\vp_1\left (\log\frac{T}{T-t},|\cdot|\right ) \right \|_{\dot H^k(\mathbb B^{11})}
                           \leq \left \|\Phi\left (\log\frac{T}{T-t}\right )\right \|_{\mathcal{H}} \\
                         &\leq \tfrac{\delta}{M_0} (T-t)^{2\epsilon}
  \end{align*}
  for all $t\in [0,T)$ and any integer $0\leq k \leq 6$.  Furthermore,
  \begin{align*}
    (T-t)^{l-\frac{7}{2}}&\left \||\cdot|^{-1}\left (\partial_t u(t,|\cdot|)-\partial_t u^T(t,|\cdot|)\right )\right \|_{\dot H^l(\B^{11}_{T-t})} \\
                         &=(T-t)^{l-\frac{11}{2}}\left \|\vp_2\left (\log\frac{T}{T-t},\frac{|\cdot|}{T-t}\right ) \right \|_{\dot H^l(\B^{11}_{T-t})} \\
                         &=\left \|\vp_2\left (\log\frac{T}{T-t},|\cdot|\right ) \right \|_{\dot H^l(\B^{11})}
                           \leq \left \|\Phi\left (\log\frac{T}{T-t}\right )\right \|_{\mathcal{H}} \\
                         &\leq \tfrac{\delta}{M_0} (T-t)^{2\epsilon}
  \end{align*}
  for all $l=0,1,\dots,5$.  Finally, by Sobolev embedding we infer
  \begin{align*}
    \|u(t,\cdot)-u^T(t,\cdot)\|_{L^{\infty}(0,T-t)} &\leq (T-t)\big\||\cdot|^{-1}\left( u(t,|\cdot|)-u^T(t,|\cdot|)\right)\big\|_{L^{\infty}(0,T-t)} \\ &\lesssim (T-t)\big\||\cdot|^{-1}\left( u(t,|\cdot|)-u^T(t,|\cdot|)\right)  \big\|_{H^{\frac{11}{2}+\epsilon}(\B^{11}_{T-t})}\\
                                                    &\lesssim \tfrac{\delta}{M_0} (T-t)^{\epsilon}
  \end{align*}
and this finishes the proof by setting $M:=M_0^2$.
\end{proof}
\begin{remark}
  Based on \cite{DonSch16,ChaDonnGlo17}, the analogue of
  Theorem \ref{Th:Main} in any odd dimension $d\geq11$ follows from the
  mode stability of the solution $u^T$. This will be addressed in a
  forthcoming publication.
\end{remark}
\appendix

\section{Proof of Proposition
  \ref{Prop:NegCurv}}\label{Sec:ProofNegCurv}
\noindent A straightforward computation shows that all
 sectional curvatures of the manifold $N^d$ are given by
either
\begin{equation}\label{eq:curvatures}
  (i)\enskip \frac{-g''(u)}{g(u)} \quad \text{or} \quad (ii)\enskip \frac{1-g'(u)^2}{g(u)^2}.
\end{equation} 
We first show that the two expressions above are negative provided $d\geq8$ and $u\in I:=[0,\phi_0(1)]$.
For convenience we let $d=e+8$. We now have
\begin{equation}\label{eq:sec_curv}
  \frac{g''(u)}{g(u)}=\frac{6(23e+14)^2u^6-63(23e+14)u^4-2(115e+21)u^2+21}{[(23e+14)u^4-7u^2-1]^2}.
\end{equation}
Denote the numerator in the above expression by $N(e,u)$. To show that
the first quantity in~\eqref{eq:curvatures} is negative it suffices to 
prove that $N(e,u)>0$ for $(e,u)\in [0,\infty)\times I$. To that end,
it is enough to show that for any fixed $e\geq 0$ the following
inequalities hold
\begin{equation}\label{eq:claims0}
  (i)\enskip N(e,0)>0, \quad (ii)\enskip N(e,\phi_0(1))>0\quad \text{and} \quad (iii)\enskip \partial^2_uN(e,u)<0\enskip \text{for } u\in I.
\end{equation}
We start by proving the third claim above. Note that it is enough to
show that
\begin{equation}\label{eq:claims}
  (i)\enskip \partial^2_uN(e,0)<0, \quad \text{and} \quad (ii)\enskip\partial^3_uN(e,u)\leq0 \enskip \text{for}\enskip u\in I.
\end{equation} To establish~\eqref{eq:claims} we need the following
\begin{align}
  \partial^2_uN(e,u)&=4[45(23e+14)^2u^4-189(23e+14)u^2-115e-21],\label{eq:par2}\\
  \partial^3_uN(e,u)&=72(23e+14)u[10(23e+14)u^2-21]\enskip \text{and}\label{eq:par3}\\
  \partial^5_uN(e,u)&=4320u(23e+14)^2.\label{eq:par5}
\end{align}
Equation~\eqref{eq:par2} gives $\partial^2_uN(e,0)=-4(115e+21)$ and
the first claim in~\eqref{eq:claims} follows. From ~\eqref{eq:par5} we
see that $\partial^3_uN(e,u)$ is convex for $u\in I$. Therefore, since
$\partial^3_uN(e,0)=0$ it is enough to show that
\begin{equation}\label{eq:D3N}
  \partial^3_uN(e,\phi_0(1))\leq0,
\end{equation} for the second claim in~\eqref{eq:claims} to hold. To establish this inequality, we first use definition~\eqref{Eq:Sol} to compute
\begin{equation*}\label{eq:u0}
  \phi_0(1)=\left(\frac{2}{\sqrt{(e+7)(46e^2+445e+567)}-7(e+7)}\right)^{\frac{1}{2}}.
\end{equation*}
Now, according to~\eqref{eq:par3}, it is enough to prove that
$10(23e+14)\phi_0(1)^2-21<0$ for \eqref{eq:D3N} to hold. This
inequality is equivalent to $441e^2-925e+1316>0$, which clearly holds
for all $e\geq0$. This concludes the proof of the third claim in
\eqref{eq:claims0}. Since the first claim in~\eqref{eq:claims0} is
obviously true it is left to prove that $N(e,\phi_0(1))>0$. To that
end we first compute
\begin{equation}\label{eq:N0}
  N(e,\phi_0(1))=\frac{2(P(e)\sqrt{Q(e)}-R(e))}{[\sqrt{Q(e)}-7(e+7)]^3},
\end{equation}
where
\begin{align*}
  P(e)&=7(69e^3+1831e^2+11500e+17094),\\
  Q(e)&=(e+7)(46e^2+445e+567)\enskip \text{and}\\
  R(e)&=20723e^4+433338e^3+3077307e^2+8566502e+7537866.
\end{align*}
The denominator in~\eqref{eq:N0} is positive if and only of
$Q(e)^2-49(e+7)^2>0$. This is equivalent to $2(e+8)(e+7)(23e+14)>0$,
which is manifestly true for $e\geq 0$. The numerator in~\eqref{eq:N0}
is positive if and only if $P(e)^2Q(e)-R(e)^2>0$, which is equivalent
to $2(23e+14)^2S(e)>0$ where
\begin{align*}\label{eq:S}
  S(e)=10143e^7+289189e^6&+2979735e^5+12402439e^4\\&+11046366e^3-30567884e^2+15651132e+22614480.
\end{align*}
The positivity of $S(e)$ is easily shown; for example we have
\[
12402439e^4+22614480>30567884e^2.
\]
The positivity of $N(e,\phi_0(1))$ follows.
	
Now we turn to proving that the second expression in~\eqref{eq:curvatures} is negative for $d\geq8$ and $u\in I$. Since $g''(u)/g(u)$ is positive for $u\in I$ and $g(u)>0$ for small positive values of $u$, we conclude that both $g''$ and $g$ are positive on $(0,\phi_0(1)]$.
Consequently
\[ g'(u)-1=g'(u)-g'(0)=\int_0^u g''(t)dt>0 \quad \text{for} \quad u\in(0,\phi_0(1)].\]
Hence $g'(u)^2-1>0$ and therefore $$\frac{1-g'(u)^2}{g(u)^2}<0$$ for $u\in(0,\phi_0(1)]$.
Additionally, by direct computation we see that $$\frac{1-g'(0)^2}{g(0)^2}=-21<0.$$
Finally, for each $d\geq 8$ we infer the existence of $\ve>0$ for which both expressions in~\eqref{eq:curvatures} are negative provided $|u|<\phi_0(1)+\ve$. For $|u|\geq\phi_0(1)+\ve$, the function $g(u)$ can be easily modified so that it satisfies~\eqref{eq:assumg} and both expressions in~\eqref{eq:curvatures} remain negative.

\section{Estimate for $\delta_7$}
\begin{proposition}\label{Prop:AppendixEstimate}
  For $\delta_7$ defined in Eq.~\eqref{Def:Delta} and $\la\in\Hb$ we have
  \begin{equation}\label{Eq:delta_estimate}
    |\delta_7(\la)|\leq\frac{1}{3}.
  \end{equation}
\end{proposition}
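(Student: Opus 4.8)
The plan is to reduce \eqref{Eq:delta_estimate} to an explicit polynomial inequality on the closed right half-plane $\Hb$ and then to verify that inequality directly, following the scheme used for the corresponding estimate in \cite{CDG17}.

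First I would compute $r_7(\la)$ explicitly. Since $r_n=a_{n+1}/a_n$ and the coefficients $a_n$ from \eqref{eq:rec_a} are polynomials in $\la$ of degree $2n$ with rational coefficients, it suffices to run the recurrence \eqref{eq:rec_a} from $a_{-1}=0$, $a_0=1$ up to $a_8$; this yields $r_7(\la)=a_8(\la)/a_7(\la)$. Setting $n=7$ in the definition of the quasi-solution gives $\tilde r_7(\la)=\frac{\la^2}{419}+\frac{\la}{14}+\frac{26}{37}$, whose two zeros satisfy a quadratic with strictly positive coefficients and therefore lie in the open left half-plane; in particular $\tilde r_7$ is nonvanishing on $\Hb$. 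Writing $\tilde N(\la):=\la^2+\frac{419}{14}\la+\frac{26\cdot419}{37}$, so that $\tilde r_7=\tilde N/419$, the definition \eqref{Def:Delta} gives
\[
  \delta_7(\la)=\frac{r_7(\la)-\tilde r_7(\la)}{\tilde r_7(\la)}=\frac{p(\la)}{q(\la)},\qquad p:=419\,a_8-\tilde N\,a_7,\quad q:=\tilde N\,a_7,
\]
with $p,q$ polynomials of degree $16$ and rational coefficients.

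For $\la\in\Hb$ the bound \eqref{Eq:delta_estimate} is equivalent to $9\,|p(\la)|^2\le|q(\la)|^2$. Putting $\la=x+iy$ with $x\ge0$ and using that $p,q$ have real coefficients, $|p(x+iy)|^2=p(x+iy)p(x-iy)$ and $|q(x+iy)|^2=q(x+iy)q(x-iy)$ are polynomials in $x,y$ with rational coefficients that are even in $y$, so that
\[
  D(x,y):=|q(x+iy)|^2-9\,|p(x+iy)|^2=\sum_{j,k\ge0}c_{jk}\,x^{j}y^{2k}
\]
for certain rationals $c_{jk}$. It then suffices to show $D(x,y)\ge0$ for all $x\ge0$ and all $y\in\R$. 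If every $c_{jk}$ is nonnegative this is immediate; otherwise the finitely many negative coefficients are dominated by positive ones of comparable degree by means of Young's inequality, exactly as in the proof of Lemma~4.4 of \cite{CDG17}. The structure of the argument is unchanged, but the polynomial $D$ here is the one generated by the present $A_n,B_n$ and quasi-solution, so its positivity has to be rechecked; this is the adjustment alluded to in the main text. As a byproduct, the inequality $9|p|^2\le|q|^2$ on $\Hb$ forces $a_7$ to be nonvanishing there, so that $\delta_7$ is in fact holomorphic on $\Hb$.

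The conceptual content is thus modest; the main obstacle is computational: carrying out the eight steps of \eqref{eq:rec_a} to produce $p$ and $q$ of degree $16$, expanding $D(x,y)$ (of degree $32$ in $x,y$), and identifying the right grouping and Young splittings that make nonnegativity on $\{x\ge0\}$ manifest. This is routine but lengthy, which is why it is relegated to the appendix.
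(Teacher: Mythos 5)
Your overall reduction to an explicit polynomial inequality is sound, and the ancillary computations are correct: for $n=7$ the quasi-solution is $\tilde r_7(\la)=\frac{\la^2}{419}+\frac{\la}{14}+\frac{26}{37}$, which is nonvanishing on $\Hb$ since it is a quadratic with positive real coefficients; $\deg a_n=2n$; and $\delta_7=p/q$ with $p,q$ of degree $16$. But the paper's proof takes a genuinely different route, and the difference is what makes the computation tractable. The paper first notes that $r_7$ and $1/\tilde r_7$ are analytic in $\Hb$ (following Lemma 4.3 of \cite{CDG17}), so $\delta_7$ is holomorphic and, being rational, polynomially bounded there; it then invokes the Phragm\'en--Lindel\"of principle to reduce the estimate on all of $\Hb$ to the single boundary inequality $|\delta_7(is)|^2\leq 1/9$ for $s\in\R$. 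Using the fact that $s\mapsto|\delta_7(is)|^2$ is even, it suffices to take $s\geq0$, and the paper then performs the substitutions $s=4t/(t+1)$ (covering $s\in[0,4)$) and $s=t+4$ (covering $s\geq4$), each with $t\geq0$. After this change of variable the relevant difference of numerators $Q_2-9Q_1\in\mathbb Z[t]$ turns out to have manifestly positive coefficients, so positivity on $t\geq0$ is immediate with no further estimates.

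Your proposal skips the Phragm\'en--Lindel\"of reduction and attempts to prove positivity of the full two-variable polynomial $D(x,y)$ of total degree $32$ directly on the quadrant $\{x\geq0,\,y\in\R\}$. That could in principle work, but the decisive step --- ``otherwise the finitely many negative coefficients are dominated by positive ones of comparable degree by means of Young's inequality'' --- is a hope rather than an argument: there is no a priori guarantee that a bivariate polynomial that is nonnegative on a half-plane admits such a finite Young-type certificate, and you have not exhibited one. The one-variable reduction combined with the fractional-linear and affine changes of variable is precisely the device that turns the problem into a transparent coefficient check, and it is what the paper means by the ``slight adjustment'' of the corresponding argument in \cite{CDG17}; indeed the $|\delta_7|$ estimate in \cite{CDG17} is itself obtained via Phragm\'en--Lindel\"of, not by the bivariate Young argument you describe, so the analogy you cite does not actually support your plan. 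Your ``byproduct'' observation that the inequality $9|p|^2\leq|q|^2$ removes any possible zero of $q$ in $\Hb$ is correct and neatly avoids needing the analyticity discussion, but it does not help with the core difficulty of verifying the inequality.
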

\begin{proof}
  Following the proof of Lemma 4.3 in \cite{CDG17} we show that for
  $r_7$ and $(\tilde{r}_7)^{-1}$ are analytic in $\Hb$. This implies
  that $\delta_7$ is also analytic there. Furthermore, being a
  rational function, $\delta_7$ is evidently polynomially bounded in
  $\Hb$. Therefore, according to the Phragm\'{e}n-Lindel\"{o}f
  principle\footnote{We use the sectorial formulation of this
    principle, see, for example, \cite{Titchmarsh58}, p. 177.}, it
  suffices to prove that~\eqref{Eq:delta_estimate} holds on the
  imaginary line, i.e.
  \begin{equation}\label{Eq:Delta7estimate}
    |\delta_7(is)|^2\leq\frac{1}{9} \quad\text{for }s\in\mathbb{R}.
  \end{equation} 
  Note that the function $s\mapsto|\delta_7(is)|^2$ is even. It is
  therefore enough to prove~\eqref{Eq:Delta7estimate} for nonnegative
  $s$ only. We show that for $t\geq 0$,
  \begin{equation}\label{Eq:tEstimates}
    \left|\delta_7\left(\frac{4t}{t+1}i\right)\right|^2\leq\frac{1}{9} \quad \text{and} \quad \left|\delta_7\big( (t+4)i \big) \right|^2\leq\frac{1}{9}.
  \end{equation}
  The first estimate above proves~\eqref{Eq:Delta7estimate} for
  $s\in[0,4)$, while the second one covers the complementary interval
  $[4,\infty).$ We prove both estimates in~\eqref{Eq:tEstimates} in
  the same way and therefore illustrate the proof of the second one
  only.  Note that
 $$\left|\delta_7\big( (t+4)i \big) \right|^2=\frac{Q_1(t)}{Q_2(t)}$$
 where $Q_j(t)\in\mathbb{Z}[t]$, $\deg Q_j=32$ and $Q_2$ has all
 positive coefficients.  Therefore,
 $|\delta_7\big( (t+4)i \big) |^2\leq\frac{1}{9}$ is equivalent to
 $Q_2-9Q_1\geq0$ and a direct calculation shows that the
 polynomial $Q_2-9Q_1$ has manifestly positive coefficients.
\end{proof}

\bibliography{references1} \bibliographystyle{plain}

\end{document}